\author[P. Caputo]{Pietro Caputo}
\address{Pietro Caputo. Dip. Matematica, Universita' di Roma Tre, L.go S. Murialdo 1,
00146 Roma, Italy} \email{caputo\@@mat.uniroma3.it}
\author[A. Faggionato]{Alessandra Faggionato} \address{Alessandra Faggionato. Dip. Matematica,
Universit\`a di Roma ``La Sapienza'', P.le Aldo Moro 2, 00185 Roma,
Italy} \email{faggiona\@@mat.uniroma1.it}
\author[A.  Gaudilli\`{e}re]{Alexandre Gaudilli\`ere} \address{Alexander Gaudilli\`ere. Dip. Matematica, Universita' di Roma Tre, L.go S.
Murialdo 1, 00146 Roma, Italy} \email{gaudilli\@@mat.uniroma3.it}
\title[random walk on a
  random point process]
{Recurrence and transience  for long--range reversible
random walks on a
         random point process}
\numberwithin{equation}{section}
\DeclareMathSymbol{\leqslant}{\mathalpha}{AMSa}{"36} 
\DeclareMathSymbol{\geqslant}{\mathalpha}{AMSa}{"3E} 
\DeclareMathSymbol{\eset}{\mathalpha}{AMSb}{"3F}     
\renewcommand{\leq}{\;\leqslant\;}                   
\renewcommand{\geq}{\;\geqslant\;}                   
\newcommand{\la}{\label}
\newcommand{\be}{\begin{equation}}
\def\1{\ifmmode {1\hskip -3pt \rm{I}} \else {\hbox {$1\hskip -3pt \rm{I}$}}\fi}
\newtheorem{Th}{Theorem}[section]
\newtheorem{Le}[Th]{Lemma}
\newtheorem{Pro}[Th]{Proposition}
\newtheorem{Cor}[Th]{Corollary}
\newcommand{\cC}{\ensuremath{\mathcal C}}
\newcommand{\cD}{\ensuremath{\mathcal D}}
\newcommand{\cE}{\ensuremath{\mathcal E}}
\newcommand{\cF}{\ensuremath{\mathcal F}}
\newcommand{\cK}{\ensuremath{\mathcal K}}
\newcommand{\cN}{\ensuremath{\mathcal N}}
\newcommand{\cP}{\ensuremath{\mathcal P}}
\newcommand{\bbC}{{\ensuremath{\mathbb C}} }
\newcommand{\bbE}{{\ensuremath{\mathbb E}} }
\newcommand{\bbN}{{\ensuremath{\mathbb N}} }
\newcommand{\bbP}{{\ensuremath{\mathbb P}} }
\newcommand{\bbR}{{\ensuremath{\mathbb R}} }
\newcommand{\bbZ}{{\ensuremath{\mathbb Z}} }
\newcommand{\si}{\sigma}
\newcommand{\wt}{\widetilde}
\let\a=\alpha \let\b=\beta   \let\d=\delta  \let\e=\varepsilon
 \let\g=\gamma     \let\k=\kappa  \let\l=\lambda
      \let\o=\omega    \let\p=\pi  
\let\r=\rho  \let\s=\sigma    
\let\y=\upsilon \let\x=\xi 
\let\D=\Delta   \let\G=\Gamma  \let\L=\Lambda 
\let\O=\Omega      
\def\\{\hfill\break}
\def\thsp{\thinspace}
\def\x{\thinspace}
\def\tthsp{\kern .083333 em}
\def\?{\mskip -10mu}
\def\indbox#1{\hbox to \parindent{\hfil\ #1\hfil} }
\def\hexnumber#1{%
  \ifcase#1 0\or 1\or 2\or 3\or 4\or 5\or 6\or 7\or 8\or
  9\or A\or B\or C\or D\or E\or F\fi}
\font\tenmsa=msam10
\font\sevenmsa=msam7
\font\fivemsa=msam5
\edef\msafamhexnumber{\hexnumber\msafam}%
\mathchardef\restriction"1\msafamhexnumber16
\mathchardef\ssim"0218
\mathchardef\square"0\msafamhexnumber03
\mathchardef\eqd"3\msafamhexnumber2C
\def\QED{\ifhmode\unskip\nobreak\fi\quad
  \ifmmode\square\else$\square$\fi}
\font\tenmsb=msbm10
\font\sevenmsb=msbm7
\font\fivemsb=msbm5
\font\teneufm=eufm10
\font\seveneufm=eufm7
\font\fiveeufm=eufm5
\def\({\left(}
\def\){\right)}
\let\neper=e
\let\ii=i
\def\nep#1{ \neper^{#1}}
\def\tc{\thsp | \thsp}
\def\tr{ \mathop{\rm tr}\nolimits }
\outer\def\nproclaim#1 [#2]#3. #4\par{\medbreak \noindent
   \talato(#2){\bf #1 \Thm[#2]#3.\enspace }%
   {\sl #4\par }\ifdim \lastskip <\medskipamount
   \removelastskip \penalty 55\medskip \fi}
\def\thmm[#1]{#1}
\def\teo[#1]{#1}
\def\sttilde#1{%
\dimen2=\fontdimen5\textfont0
\setbox0=\hbox{$\mathchar"7E$}
\setbox1=\hbox{$\scriptstyle #1$}
\dimen0=\wd0
\dimen1=\wd1
\advance\dimen1 by -\dimen0
\divide\dimen1 by 2
\vbox{\offinterlineskip%
   \moveright\dimen1 \box0 \kern - \dimen2\box1}
}
\newcommand{\varphipa}{\varphi_{{\rm p},\alpha}}
\newcommand{\varphieb}{\varphi_{{\rm e},\beta}}
\begin{document}

\maketitle

\begin{abstract}
We consider reversible random walks in random environment obtained from symmetric
long--range jump rates on a random point process. We prove almost sure transience
and recurrence results under suitable assumptions
on the point process and the jump rate function.
For recurrent models we obtain
almost sure estimates on effective resistances in finite boxes.
For transient models we construct explicit fluxes with finite energy on the
associated electrical network.

\bigskip

\noindent {\em Key words:} random walk in random environment,
recurrence, transience, point process, electrical network.

\smallskip

\noindent {\em MSC-class:} 60K37; 60G55; 60J45

\end{abstract}

\thispagestyle{empty}
\section{Introduction and results}
We consider random walks in random environment obtained as random
perturbations of long--range random walks in deterministic
environment.
Namely, let $S$ be a 
locally finite subset of $\bbR^d$, $d\geq 1$ and call $X_n$ the
discrete time Markov chain with state space $S$ which jumps from a
site  $x$ to  another  site $y$ with probability $p(x,y)$
proportional to $\varphi(|x-y|)$, where
$\varphi:(0,\infty) \to (0,1]$ is a positive bounded measurable function and $|x|$
stands for the Euclidean norm of $x\in\bbR^d$.
We write $P$ for the law of $X_n$, so that for $x\neq y\in S$:
$$
P(X_{n+1}=y\tc X_n=x) =
p(x,y):=\frac{\varphi(|y-x|)}{w_S(x)}
\,,
$$
where we define $w_S(x):=\sum_{z\in S:\,z\neq x} \varphi(|z-x|)$.
 Note
that the random walk $X_n$ is well defined as soon as
$w_S(x)\in(0,\infty)$ for every $x\in S$.   In this case,
$w_S=\{w_S(x)\,,\;x\in S\}$ is a reversible measure, i.e.\
$w_S(x)p(x,y)$ is symmetric. Since the random walk is irreducible
due to the strict positivity of $\varphi$,  $w_S$ is the unique
invariant measure up to a multiplicative
constant. We shall often speak of
the random walk  $(S,\varphi)$ when we need to emphasize the
dependence on the state space $S$ and the function $\varphi$.
Typical special cases of functions $\varphi$ will be the
polynomially decaying   function  $\varphipa(t) :=1\wedge
t^{-d-\a}$, $\a>0$ and the stretched exponential function
$\varphieb(t) :=\exp (-t^{\b})$, $\b>0$.

We investigate here the transience and recurrence of the
random walk $X_n$.
We recall that   $X_n$  is said to be {\em recurrent} if for some $x\in S$,
the walk started at $X_0=x$ returns to $x$ infinitely many times
with probability one. Because of irreducibility if this happens at
some $x\in S$ then it must happen at all $x\in S$. $X_n$ is said to
be {\em transient} if it is not recurrent.
If we fix $S=\bbZ^d$, we
obtain standard homogeneous lattice walks. Transience and recurrence
properties of these walks can be obtained by classical harmonic
analysis, as extensively discussed e.g.\ in
Spitzer's book \cite{Spitzer}  (see also Appendix \ref{losangeles}).
For instance, it is well known that for dimension $d\geq 3$ both
$(\bbZ^d,\varphieb)$ and $(\bbZ^d,\varphipa)$ are transient for all
$\b>0$ and $\a>0$ while for $d=1,2$, $(\bbZ^d,\varphieb)$  is
recurrent for all $\b>0$ and $(\bbZ^d,\varphipa)$ is transient iff
$0<\a<d$.

We shall be interested in the case where $S$ is a locally finite
{\em random} subset of $\bbR^d$, i.e.\ the realization of a simple point
process on $\bbR^d$. We denote by $\bbP$ the law of the point
process. For this model to be well defined for $\bbP$--almost all
$S$ we shall require that, given the choice of $\varphi$:
\be\la{pw}
\bbP\left( w_S(x) \in(0,\infty)\,,\;\;{\rm for \;all\;}\,x\in S\right)=1\,.
\end{equation}

If we look at the set $S$ as a random perturbation of the regular lattice $\bbZ^d$,
the first natural question is to find conditions on the law of the point process $\bbP$
and the function $\varphi$ such that $(S,\varphi)$ is $\bbP$--a.s. transient (recurrent)
iff $(\bbZ^d,\varphi)$ is transient (recurrent). In this case we say that the random walks $(S,\varphi)$ and $(\bbZ^d,\varphi)$ have a.s.\ the same {\em type}.
A second question we shall address in this paper is that of
establishing almost sure bounds on finite volume effective resistances
in the case of certain recurrent random walks of the type $(S,\varphi)$.
Before going to a description of our main results we discuss the main examples of
point process we have in mind.
In what follows we shall use the notation $S(\L)$ for the number of points of $S$ in any given bounded
Borel set $\L\subset \bbR^d$. For any $t>0$ and $x\in\bbR^d$ we write
$$Q_{x,t}:=\left[-\frac{t}2,\frac{t}2\right]^d\,,
\quad\; B_{x,t}=\{x\in\bbR^d:\;|x|< t\}\,,$$
for the cube with side $t$ and the open ball of radius $t$ around $x$.
To check that the models $(S,\varphi)$  are well defined, i.e.\ (\ref{pw}) is satisfied,
in all the examples described below the following simple criterion will be sufficient.

We write  $\Phi_d$,
for the class of functions $\varphi:(0,\infty)\to (0,1]$
such that $\int_0^\infty t^{d-1}\varphi(t)dt <\infty$.
Suppose the law of the point process $\bbP$ is such that
 \be\la{intg}
\sup_{x\in\bbZ^d}\bbE[S(Q_{x,1})] <\infty\,.
\end{equation}
Then it is immediate to check that $(S,\varphi)$ satisfies (\ref{pw}) for any $\varphi\in\Phi_d$.


\subsection{Examples}\la{examples}
The main example we have in mind is the case when  $\bbP$ is a {\sl
homogeneous Poisson point process} (PPP) on $\bbR^d$. In this case
we shall show that $(S,\varphi)$ and $(\bbZ^d,\varphi)$ have a.s.\
the same type, at least for the standard choices
$\varphi=\varphipa,\varphieb$. Besides its intrinsic interest as
random perturbation of lattice walks we point out that the Poisson
point process model arises naturally in statistical physics in the
study of the low-temperature conductivity of disordered
systems. In this context, the $(S,\varphieb)$ model with $\b=1$ is a
variant of the well known Mott variable--range hopping model, see
\cite{FSS} for more details. The original variable--range hopping
model comes with an environment of energy marks on top of the
Poisson point process which we neglect here since it does not
interfere with the recurrence or transience of the walk. It will be
clear that by elementary domination arguments all the results we
state for homogeneous PPP actually apply to non--homogeneous PPP
with an intensity function which is uniformly bounded from above and
away from zero.

Motivated by the variable--range hopping problem
one could consider point fields
obtained from
a crystal by dilution and spatial randomization.
 By crystal we mean any
locally finite  set $\Gamma\subset \bbR^d$ such that for a suitable
basis $v_1, v_2, \dots, v_d$ of $\bbR^d$, one has
\begin{equation}\label{paranza}
\G -x =\G \, \qquad \forall x\in G:=\bigl \{ z_1v_1+z_2v_2 +\cdots
+z_d v_d \,:\, z_i \in \bbZ \;\; \forall i  \bigr\}\,.
\end{equation}
The {\sl spatially randomized and $p$--diluted
crystal} is obtained from $\G$ by first translating  $\G$ by a
 random vector $V$   chosen with uniform distribution in the elementary cell
$$
\D=\bigl \{t_1 v_1+t_2 v_2 +\cdots+t_d v_d \,:\, 0\leq t_i <1\;\;
\forall  i \bigr\}\,,
$$
and then  erasing  each point with probability $1-p$, independently
from the others. One can check that the above construction depends
only on $\G$ and not on the particular $G$ and $\D$ chosen. In the
case of spatially randomized and $p$--diluted crystals,   $\bbP$ is
a stationary point process, i.e.\ it  is invariant w.r.t.\  spatial
translations. It is not hard to check that all the results we state
for PPP hold for any of these processes as well for the
associated Palm distributions (see \cite{FSS} for a
discussion on the Palm distribution and its relation to Mott
variable--range hopping). Therefore, to avoid lengthy repetitions we
shall not mention application of our estimates to these cases
explicitly in the sequel.

We shall also comment on applications of our results to two other classes
of point processes: {\em percolation clusters} and {\em determinantal point processes}.
We say that $S$ is a percolation cluster when $\bbP$ is the law of the
infinite cluster in super--critical Bernoulli site--percolation on
$\bbZ^d$. For simplicity we shall restrict to site--percolation but nothing changes here if one considers bond--percolation instead. The percolation cluster model has been extensively studied
in the case of nearest neighbor walks, see \cite{GKZ,BPP}. In
particular, it is well known that the simple random walk on the
percolation cluster has almost surely the same type of simple random walk on $\bbZ^d$.
Our results will allow to prove that if $S$ is the percolation cluster on $\bbZ^d$
then $(S,\varphi)$ has a.s.\ the same type of $(\bbZ^d,\varphi)$, at least for the standard choices
$\varphi=\varphipa,\varphieb$.

Determinantal point processes (DPP) on the other hand are defined as follows, see
\cite{So,Quattro} for recent insightful reviews on DPP.
Let $\cK$ be a locally trace class self--adjoint operator on $L^2(\bbR^d, dx)$.
If, in addition, $\cK$ satisfies $0\leq \cK\leq 1$ we can speak of the DPP associated to $\cK$.
Let $\bbP,\bbE$ denote the associated law and expectation.
It is always possible to associate a kernel $K(x,y)$ to $\cK$ such that
for any bounded measurable set $B\subset \bbR^d$ one has
\be\la{traccia}
\bbE [S(B)]=\tr (\cK 1_B) = \int_B K(x,x) dx<\infty\,
\end{equation}
where $S(B)$ is
the number of points in the set $B$ and
$1_B$ stands for multiplication by the indicator function of the set $B$, see \cite{So}.
Moreover, for any
 family of mutually disjoint subsets $D_1, D_2, \dots ,D_k \subset
 \bbR^d$ one has
\begin{equation}
\bbE \left[ \prod _{i=1}^k S(D_i) \right] = \int_{\prod_i D_i}
\rho_k (x_1,x_2, \dots , x_k) dx_1 d x_2\dots d x_k \,,
\end{equation}
where the   $k$--correlation function $\rho_k$ satisfies
$$ \rho_k (x_1, x_2, \dots, x_k) = \text{det} \left( K(x_i,x_j)
\right )_{1\leq i,j\leq k}\,.
$$
Roughly speaking, these processes are characterized by a
tendency towards repulsion between points, and
if we consider a {\em stationary} DPP,
i.e.\ the case where the kernel satisfies $K(x,y)=K(0,y-x)$, then the repulsive character
forces points to be more regularly spaced than in the Poissonian case.
A standard example is the sine kernel in $d=1$, where $K(x,y)=\frac{\sin(\pi(x-y))}{\pi(x-y)}$. Our results will imply for instance that for stationary DPP $(S,\varphi)$ and $(\bbZ^d,\varphi)$ have a.s.\ the same type if $\varphi=\varphipa$ (any $\a>0$) and if $\varphi=\varphieb$ with $\b<d$.

\subsection{Random resistor networks}\la{rrn}
  Our analysis of the transience and recurrence of the random walk
  $X_n$ will be based on
the well known resistor network representation of probabilistic
quantities associated to  reversible random walks on graphs, an
extensive discussion of which is found e.g.\ in the monographs
\cite{DS,LyonsPeres}. For the moment let us recall a few basic
ingredients of the electrical network analogy. We think of
$(S,\varphi)$ as an undirected weighted graph with vertex set $S$
and complete edge set $\{\{x,y\}\,,\; x\neq y\}$, every edge $
\{x,y\}$ having weight $\varphi(|x-y|)$. The equivalent electrical
network is obtained by connecting each pair of nodes $ \{x,y\}$ by a
resistor of magnitude $r(x,y):=\varphi(|x-y|)^{-1}$, i.e. by a
conductance of magnitude $\varphi(|x-y|)$. We point out that other
long--range  reversible random walks have already been studied (see
for example  \cite{BBK}, \cite{B}, \cite{KM}, \cite{M} and
references therein), but since the resistor networks associated to
these random walks are locally finite and not complete as in our
case, the techniques and estimates required here are very different.

 One can characterize
the transience or recurrence of $X_n$ in terms of the associated
resistor network.
Let
$\{S_n\}_{n\geq 1 }$ be an increasing  sequence of subsets $S_n
\subset S$ such that $S=\cup_{n\geq 1 }S_n$ and
let $(S,\varphi)_n$ denote the network obtained by collapsing all
sites in $S_n^c=S\setminus S_n$ into a single site $z_n$ (this
corresponds to the network where all resistors between nodes in
$S_n^c$ are replaced by infinitely conducting wires but all other
wires connecting $S_n$ with $S_n$ and $S_n$ with $S_n^c$ are left
unchanged). For $x \in S$ and $n$ large enough such that $x \in
S_n$,  let $R_n(x)$ denote the effective resistance between the
nodes $x$ and $z_n$ in the network $(S,\varphi)_n$. We recall that
$R_n(x)$ equals the inverse of the effective conductivity $C_n (x)$,
defined as the current flowing in the network when a unit voltage is
applied across the nodes $x$ and $z_n$. On the other hand it is well
known that $w_S(x)R_n(x)$ equals the expected number of  visits to
$x$ before exiting the set $S_n$ for our original random walk
$(S,\varphi)$ started at $x$. The sequence $R_n(x)$ is
non--decreasing and its limit $R(x)$ is called the effective
resistance of the resistor network $(S, \varphi)$ between $x$ and
$\infty$. Then, $w_S(x)R(x)=\lim_{n\to\infty}w_S(x)R_n(x)$ equals
the expected number of  visits  to $x$ for the walk $(S,\varphi)$
started in $x$,
and the walk $(S,\varphi)$ is recurrent iff $R_n(x)\to \infty$ for
some (and therefore any) $x\in S$.
In light of this, we shall
investigate the rate of divergence of $R_n(x)$ for specific
recurrent models.

Lower bounds on $R_n(x)$ can be obtained by the following
variational characterization of the effective conductivity $C_n(x)$:
\begin{equation}\label{var_con}
C_n(x)= \inf _{\substack{ h: S\rightarrow [0,1]\\ h(x)=0\,, h \equiv
1 \text{ on } S_n^c }} \frac{1}{2}\sum_{\substack{y, z \in
S\\y\not=z}} \varphi(|y-z|) \bigl(h(y)-h(z)\bigr)^2\,.
\end{equation}
The above infimum is attained when $h$ equals the electrical
potential, set to be zero on $x$ and $1$ on $S_n^c$. From
\eqref{var_con} one derives Rayleigh's monotonicity principle: the
effective conductivity $C_n(x)$ decreases whenever $\varphi$ is
replaced by $\varphi'$ satisfying $\varphi'(t)\leq \varphi(t)$ for
all $t>0$.
 Upper bounds on
$R_n(x)$ can be obtained by means of fluxes.
We recall that, given a point $x \in S$ and a subset $B\subset S$
not containing $x$, a unit flux from $x$ to $B$ is any antisymmetric
function $f:S\times S \rightarrow \bbR$ such that
$$ \text{div} f (y):= \sum _{z \in S} f(y,z)
\begin{cases}= 1 & \text{ if  } y=x\,,\\
= 0 & \text{ if } y \not =x \text{ and } y \not \in B\,,\\
\leq 0 &\text{ if  } y \in B\,.
\end{cases}
$$
If $B=\emptyset$ then $f$ is said to be a unit flux from $x$ to
$\infty$.  The energy $\cE(f)$ dissipated by  the flux $f$ is
defined as
\begin{equation}\label{energico}
\cE (f)= \frac{1}{2} \sum_{\substack{y,z \in S\\y\not=z} } r(y,z)
f(y,z)^2 \,.
\end{equation}
To emphasize the dependence on $S$ and $\varphi$ we shall often call $\cE(f)$ the
$(S,\varphi)$--energy.
Finally, $R_n(x)$, $R(x)$ can be shown to satisfy the following variational principles:
\begin{align}
 R_n(x)& = \inf \left\{ \cE(f)\,:\, f\text{ unit flux from $x$ to
$S_n^c$} \right\}\,,\label{var_res}\\
  R (x) & = \inf \left\{
\cE(f)\,:\, f\text{ unit flux from $x$ to $\infty$}
\right\}\,.\label{var_res_tot}
\end{align}
In particular, one has  the so called Royden--Lyons criterion
\cite{TLyons} for reversible random walks:
the random walk $X_n$ is transient if and only if
there exists a unit flux on the resistor network  from some
point $x \in S$ to $\infty$ having finite energy.

An immediate consequence of these facts is the following comparison tool, that we shall often use in the sequel.
\begin{Le}\la{lemmino}
Let $\bbP,\bbP'$ denote two point processes
on $\bbR^d$ such that
 $\bbP$ is stochastically dominated by
$\bbP'$ 
and let $\varphi,\varphi':(0,\infty)\to (0,\infty)$ be
such that $\varphi\leq C\varphi'$ for some constant $C>0$.
Suppose further that (\ref{pw}) is satisfied for both $(S,\varphi)$
and $(S',\varphi')$, where $S,S'$ denote the random sets
distributed according to $\bbP$ and $\bbP'$, respectively. The
following holds:
\begin{enumerate}
\item
 if $(S,\varphi)$ is transient $\bbP$--a.s., then $(S', \varphi')$ is transient $\bbP'$--a.s.
\item
if $(S',\varphi')$ is recurrent $\bbP'$--a.s., then $(S, \varphi)$
is recurrent $\bbP$--a.s.
\end{enumerate}
\end{Le}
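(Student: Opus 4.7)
The two assertions are contrapositives of each other (``transient'' just means ``not recurrent''), so it suffices to prove (1). The plan is to apply the Royden--Lyons criterion stated just before the lemma: it is enough to lift a finite-energy unit flux on $(S,\varphi)$ from some point to $\infty$ to a finite-energy unit flux on $(S',\varphi')$.

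The first step is to invoke Strassen's theorem, which realizes the stochastic domination of $\bbP$ by $\bbP'$ as an almost sure pointwise inclusion $S\subseteq S'$ on a single probability space carrying both processes. On this coupled space the marginal law of $S$ is unchanged, so $(S,\varphi)$ is still a.s.\ transient, and Royden--Lyons provides, a.s., a point $x\in S$ together with a unit flux $f$ from $x$ to $\infty$ on $(S,\varphi)$ with finite $(S,\varphi)$-energy $\cE(f)$. The second step is the natural extension: define an antisymmetric $f':S'\times S'\to\bbR$ by setting $f'(y,z)=f(y,z)$ whenever both $y,z\in S$ and $f'(y,z)=0$ otherwise. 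The divergence conditions for a unit flux from $x$ to $\infty$ on $(S',\varphi')$ are immediate: at points $y\in S'\setminus S$ every summand in $\mathrm{div}\,f'(y)$ vanishes, while at $y\in S$ the sum reduces to $\mathrm{div}\,f(y)$, which equals $1$ at $x$ and $0$ elsewhere.

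The last step is the energy estimate. Since $f'$ is supported on pairs in $S\times S$ and the hypothesis $\varphi\leq C\varphi'$ gives $\varphi'(|y-z|)^{-1}\leq C\varphi(|y-z|)^{-1}$, the $(S',\varphi')$-energy of $f'$ is bounded above by $C\,\cE(f)<\infty$. Royden--Lyons applied to $(S',\varphi')$ then yields transience, proving (1), and hence (2) by contraposition. I do not foresee a serious obstacle: the pointwise lifting of the flux and the multiplicative energy comparison are both essentially trivial, and the only technical ingredient is Strassen's coupling theorem for simple point processes on $\bbR^d$, which is standard.
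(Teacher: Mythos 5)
Your proof is correct and follows essentially the same route as the paper: Strassen's coupling realizing $S\subset S'$, extension of the finite-energy flux by zero off $S\times S$, the energy bound via $\varphi\le C\varphi'$, and the Royden--Lyons criterion. One small caveat: because of the almost-sure quantifiers, (2) is not literally the contrapositive of (1) (the contrapositive only says that positive $\bbP'$-probability of recurrence implies positive $\bbP$-probability of recurrence), but your argument is realization-wise on the coupled space and therefore yields both parts directly, which is exactly how the paper handles it.
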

\proof The stochastic domination assumption is equivalent to the existence of
a coupling of $\bbP$ and $\bbP'$ such that, almost surely,
$S\subset S'$ (see
e.g.\ \cite{GK} for more details). If $(S,\varphi)$ is transient then there exists a flux $f$ on $S$
with finite $(S,\varphi)$--energy from some $x\in S$ to infinity.
We can lift $f$ to a flux on $S'\supset S$ (from the same $x$ to infinity) by setting it equal to
$0$ across pairs $x,y$ where either $x$ or $y$ (or both) are not in $S$. This has finite
$(S',\varphi)$-energy, and since $\varphi\leq C\varphi'$ it will have finite $(S',\varphi')$--energy.
This proves (1). The same argument proves
(2) since if $S\subset S'$ were such that $(S,\varphi)$ is
transient then $(S',\varphi')$ would be transient and
we would have a contradiction.  \qed

\subsection{General results}
Recall the notation
$B_{x,t}$ for the open ball in $\bbR^d$ centered at $x$ with radius
$t$ and define the
function $\psi:(0,\infty)\rightarrow [0,1]$ by
\begin{equation}\label{psipsi} \psi (t):= \sup _{x\in \bbZ^d} \bbP
\bigl( S\bigl( B_{x,t}\bigr)=0\bigr)\,. 
\end{equation}

\begin{Th}\label{th1}

\smallskip

\noindent
(i) Let $d\geq 3$ and $\a>0$, or $d=1,2$ and $0<\a<d$. Suppose that
$\varphi\in\Phi_d$ and
\begin{align}
& \varphi (t) \geq c \,\varphi _{p, \a } (t)\,,\label{trans1}\\
&  \psi(t) \leq C \, t^{-\g}
 \,, \qquad\forall t >0  \,, \label{trans2}
\end{align}
for some positive constants $c,C$ and $\g>3d+\a$. Then, $\bbP$--a.s.
$(S,\varphi)$ is transient.

\smallskip

\noindent
(ii) Suppose that $d\geq 3$ and
\be\la{trans20}
\int_0^\infty \nep{a\, t^\b}\psi(t)\,dt<\infty\,,
\end{equation}
for some $a,\b>0$. Then there exists $\d=\d(a,\b)>0$ such that
$(S,\varphi)$ is a.s.\ transient whenever $\varphi (t)\geq
c\,\nep{-\d\,t^{\b}}$ for some $c>0$.

\smallskip

\noindent
(iii) Set $d\geq 1$ and
suppose that
\be\la{rick2}
 \sup_{x\in \bbZ^d}\bbE \left[ S( Q_{x,1})^2\right]<\infty\,.
\end{equation}
Then $(S,\varphi)$ is $\bbP$--a.s.\ recurrent whenever
$(\bbZ^d,\varphi_0)$ is recurrent, where $\varphi_0$ is given by
\be\la{fi0}
\varphi_0(x,y):=\max_{u\in Q_{x,1},\,v\in Q_{y,1}}\varphi(u,v)\,.
\end{equation}
\end{Th}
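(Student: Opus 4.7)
The plan is to prove recurrence via the conductance variational principle \eqref{var_con}, by lifting test functions from the deterministic lattice network $(\bbZ^d,\varphi_0)$ to the random set $S$ through the cell partition $\bbR^d=\bigcup_{a\in\bbZ^d}Q_{a,1}$. Let $\pi:\bbR^d\to\bbZ^d$ be the projection defined by $\pi(y)=a$ whenever $y\in Q_{a,1}$ (with any fixed measurable tie-breaking on cell boundaries). Fix $a\in\bbZ^d$. Since $\varphi$ depends only on Euclidean distance, $\varphi_0$ is translation invariant on $\bbZ^d\times\bbZ^d$; by recurrence of $(\bbZ^d,\varphi_0)$ together with \eqref{var_con} applied to the lattice, there exist boxes $\L_n\uparrow\bbZ^d$ centered at $a$ and functions $g_n:\bbZ^d\to[0,1]$ with $g_n(a)=0$, $g_n\equiv 1$ on $\bbZ^d\setminus\L_n$, and lattice Dirichlet form
\[
D^{\bbZ^d}(g_n):=\frac{1}{2}\sum_{b\neq c\in\bbZ^d}\varphi_0(b,c)\bigl(g_n(b)-g_n(c)\bigr)^2\longrightarrow 0.
\]

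Next, set $h_n:S\to[0,1]$ by $h_n(y):=g_n(\pi(y))$. By construction $h_n\equiv 0$ on $S\cap Q_{a,1}$ and $h_n\equiv 1$ on $S\setminus S_n$, where $S_n:=\{y\in S:\pi(y)\in\L_n\}\uparrow S$. Hence \eqref{var_con} gives, for every $x\in S\cap Q_{a,1}$,
\[
C_n(x)\leq D^S(h_n):=\frac{1}{2}\sum_{y\neq z\in S}\varphi(|y-z|)\bigl(h_n(y)-h_n(z)\bigr)^2.
\]
Decomposing the inner sum according to the cells of $y$ and $z$ and invoking \eqref{fi0} gives
\[
D^S(h_n)\leq\frac{1}{2}\sum_{b\neq c\in\bbZ^d}\varphi_0(b,c)\bigl(g_n(b)-g_n(c)\bigr)^2\,S(Q_{b,1})\,S(Q_{c,1}).
\]
Taking expectations, Cauchy--Schwarz and \eqref{rick2} yield $\bbE[S(Q_{b,1})S(Q_{c,1})]\leq M:=\sup_{x\in\bbZ^d}\bbE[S(Q_{x,1})^2]<\infty$, so that $\bbE[D^S(h_n)]\leq M\cdot D^{\bbZ^d}(g_n)\to 0$.

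To conclude, since $S$ is $\bbP$-a.s.\ non-empty some $a\in\bbZ^d$ satisfies $\bbP(S\cap Q_{a,1}\neq\emptyset)>0$. Fixing such $a$, Markov's inequality and a Borel--Cantelli extraction give a subsequence along which $D^S(h_{n_k})\to 0$ $\bbP$-a.s., and the monotonicity of $n\mapsto C_n(x)$ inherited from $S_n\uparrow S$ then yields $C_n(x)\to 0$, i.e.\ $R_n(x)\to\infty$, $\bbP$-a.s.\ for every $x\in S\cap Q_{a,1}$. Running the same argument over the countably many $a\in\bbZ^d$ and using the irreducibility of $(S,\varphi)$ (already noted in the paper through strict positivity of $\varphi$) extends recurrence to every starting point on the full-measure event $\{S\neq\emptyset\}$. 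The main difficulty is precisely the step where the random cell counts $S(Q_{b,1})S(Q_{c,1})$ enter: assumption \eqref{rick2} is required in order to bound their expectation uniformly in $b,c$ and thereby make the Dirichlet form comparison between $(S,\varphi)$ and $(\bbZ^d,\varphi_0)$ effective.
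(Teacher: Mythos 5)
Your argument addresses only part~(iii) of the theorem. For that part it is correct, and it is essentially the paper's own proof in lightly different clothing: the paper routes the same computation through the general Lemma~\ref{prorec}, pulling back the harmonic potential of $(\bbZ^d,\varphi_0)$ to the augmented set $\wt S=S\cup\bbZ^d$ via the closest--point map (whose cells for $S_0=\bbZ^d$ are exactly your unit cubes), and then uses monotonicity of the finite--volume conductances to upgrade $\bbE[c(\wt S_n)]\to 0$ to almost sure convergence, where you instead use a Borel--Cantelli subsequence extraction plus monotonicity of $C_n(x)$. Both steps are valid; the key mechanism --- bounding the cross term by $\varphi_0(b,c)\,S(Q_{b,1})S(Q_{c,1})$ and controlling its expectation by Cauchy--Schwarz and \eqref{rick2} --- is identical. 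Your handling of the base point (choosing $a$ with $S\cap Q_{a,1}\neq\emptyset$ and intersecting over countably many $a$) replaces the paper's device of adjoining $\bbZ^d$ to $S$, and works equally well.

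The genuine gap is that parts~(i) and~(ii) --- the transience statements --- are not addressed at all, and they cannot be obtained by the Dirichlet--form comparison you use: a test--function bound only gives upper bounds on conductances, i.e.\ it can only ever prove recurrence. Transience requires the dual object, a unit flux of finite energy (the Royden--Lyons criterion). The paper's proof pushes a finite--energy flux forward from $(\bbZ^d,\varphipa)$ to $(S,\varphi)$ along the closest--point map $\phi:\bbZ^d\to S$, and the whole difficulty is the moment estimate \eqref{bound}, namely
\[
\bbE\bigl[N(\phi(x))\,N(\phi(y))\,r(\phi(x),\phi(y))\bigr]\leq C\,r_0(x,y)\,,
\]
which couples the random cell sizes $N(\phi(\cdot))$ with the random resistances. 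Verifying it needs a H\"older/Schwarz splitting with exponents $p,q$ optimized against each other (this is precisely where the threshold $\g>3d+\a$ in \eqref{trans2} comes from), together with a geometric lemma showing that on the event that $2d+1$ suitably placed balls of radius $t$ all meet $S$, the cell $V_{\phi(x)}$ has diameter $O(t)$, so that tails of $N(\phi(x))$ and of $|\phi(x)-x|$ are both controlled by the void function $\psi$. Part~(ii) repeats this with stretched--exponential weights under \eqref{trans20}. None of these ideas appear in your proposal, so as a proof of the stated theorem it is incomplete.
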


The proof of these general statements is given is Section \ref{gen}.
It relies on rather elementary arguments not far from the {\em rough embedding} method described in \cite[Chapter 2]{LyonsPeres}. In particular, to prove (i) and (ii) we shall construct a flux on $S$ from a point $x\in S$ to infinity and show that
it has finite $(S,\varphi)$--energy under suitable assumptions. The flux will be constructed
using comparison with suitable
long--range random walks $\bbZ^d$.  Point (iii) of Theorem
\ref{th1} is obtained by exhibiting a candidate for the electric potential in the network $(S,\varphi)$ which produces a vanishing conductivity. Again the construction is achieved using
comparison with long--range random walks on $\bbZ^d$.
%
%

Despite the simplicity
of the argument, Theorem \ref{th1} already captures
non--trivial facts such as e.g.\ the transience of the super--critical
percolation cluster in dimension two with $\varphi=\varphipa$, $\a<2$. More generally,
combining (i) and (iii) of Theorem \ref{th1} we shall obtain the following corollary.

\begin{Cor}\la{cor1}
Fix $d\geq 1$. Let $\bbP$ be one of the following point processes: a homogeneous
PPP;
the infinite cluster in super--critical  Bernoulli
site--percolation on $\bbZ^d$; a stationary DPP
on $\bbR^d$.
Then $(S,\varphipa)$ has a.s.\ the same type as $(\bbZ^d,\varphipa)$, for all $\a>0$.
%
\end{Cor}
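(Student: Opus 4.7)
The strategy is to split into the transient regime ($d \geq 3$ with any $\alpha > 0$, or $d \in \{1,2\}$ with $0 < \alpha < d$) and the recurrent regime ($d \in \{1,2\}$ with $\alpha \geq d$), and to apply parts (i) and (iii) of Theorem \ref{th1} respectively.

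For the transient regime I apply Theorem \ref{th1}(i) with $\varphi = \varphipa$. The conditions $\varphipa \in \Phi_d$ and $\varphipa \geq c\,\varphipa$ are trivial, so the real work is verifying the polynomial decay $\psi(t) \leq C t^{-\gamma}$ for some $\gamma > 3d + \alpha$. In each ensemble I will in fact establish decay faster than any polynomial, so any choice of $\gamma$ is admissible. For the homogeneous PPP of intensity $\lambda$ this is the Poisson void formula $\psi(t) = \exp(-\lambda\,\omega_d\, t^d)$, with $\omega_d$ the volume of the unit ball. For a stationary DPP with kernel $K$, the identity $\bbP(S(B)=0) = \det(I - K_B) = \prod_j(1-\lambda_j)$, where $\lambda_j \in [0,1]$ are the eigenvalues of the restriction $K_B$, together with $1 - x \leq e^{-x}$, yields the bound $\bbP(S(B_{x,t})=0) \leq \exp(-\tr K_B) = \exp(-K(0,0)\,\omega_d\,t^d)$. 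For the supercritical percolation cluster I invoke the classical supercritical-percolation estimate that the probability that a macroscopic box of side $t$ contains no vertex of the infinite cluster decays faster than any polynomial in $t$ (in fact stretched-exponentially).

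For the recurrent regime I apply Theorem \ref{th1}(iii). The hypothesis $\sup_x \bbE[S(Q_{x,1})^2] < \infty$ is immediate in all three cases: Poisson moments for the PPP; the deterministic inequality $S(Q_{x,1}) \leq |Q_{x,1} \cap \bbZ^d|$ for the percolation cluster; and the two-point-function formula $\bbE[S(A)(S(A)-1)] = \iint_{A \times A}[K(x,x)K(y,y) - |K(x,y)|^2]\,dx\,dy \leq K(0,0)^2 |A|^2$ for the DPP. To conclude, I observe that $\varphipa$ is non-increasing, so since the minimum Euclidean distance between the unit cubes $Q_{x,1}$ and $Q_{y,1}$ (with $x, y \in \bbZ^d$) is at least $\max(0, |y-x| - \sqrt d)$, one has $\varphi_0(x,y) \leq C_d\,\varphipa(|y-x|)$ for all $x, y \in \bbZ^d$. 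By Rayleigh's monotonicity principle, $(\bbZ^d, \varphi_0)$ then inherits the recurrence of $(\bbZ^d, \varphipa)$ in the regime $\alpha \geq d$ with $d \leq 2$. The main obstacle is securing the fast-decay input for the percolation cluster, which relies on non-trivial supercritical percolation estimates; the PPP and DPP inputs are one-line identities and the comparison with $\varphi_0$ is elementary.
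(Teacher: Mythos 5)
Your proposal is correct and follows essentially the same route as the paper: combine Theorem \ref{th1}(i) in the transient regime and Theorem \ref{th1}(iii) in the recurrent regime, verifying the decay of $\psi(t)$ via the Poisson void formula, the determinantal identity $\bbP(S(B)=0)=\prod_j(1-\l_j)\leq \nep{-\tr(\cK 1_B)}$, and the standard super--critical percolation estimates (the paper's Lemma \ref{clusterlemma} makes the latter explicit with an $\nep{-\d t^{d-1}}$ bound via Pisztora's theorem), and checking (\ref{rick2}) and the comparison $\varphi_0\leq C_d\,\varphipa$ exactly as you do. No gaps.
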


We note that for the transience results (i) and (ii) we only need to check the
sufficient conditions (\ref{trans2}) and (\ref{trans20}) on the function $\psi(t)$.
Remarks on how to prove bounds on $\psi(t)$ for
various processes are given in Subsection \ref{prcor1}.
Conditions (\ref{trans2}) and (\ref{trans20})
in Theorem \ref{th1} are in general far from optimal.
We shall give a bound that improves point (i)
in the case $d=1$, see Proposition \ref{treno1} below.

The limitations of Theorem \ref{th1} become more important when $\varphi$ is rapidly
decaying and $d\geq 3$. For instance, if $\bbP$ is the law of the infinite
percolation cluster, then $\psi(t)$ satisfies a bound of the form
$\nep{-c\,t^{d-1}}$, see Lemma \ref{clusterlemma} below.
Thus in this case point (ii) would only allow to conclude that
  there exists $a=a(p)>0$ such that, in $d\geq 3$,
$(S,\varphi)$ is $\bbP$--a.s.\ transient if $\varphi(t)\geq C
\nep{-a\,t^{d-1}}$. However, the well known Grimmett--Kesten--Zhang
theorem about the transience of {\em nearest neighbor} random walk
on the infinite cluster in $d\geq 3$ (\cite{GKZ}, see also
\cite{BPP} for an alternative proof) together with Lemma
\ref{lemmino} immediately implies that $(S,\varphi)$ is a.s.\
transient for any $\varphi\in\Phi_d$. Similarly, one can use
stochastic domination arguments to improve point (ii) in Theorem
\ref{th1} for other processes. To this end we say that the process
$\bbP$ {\em dominates (after coarse--graining)
super--critical Bernoulli site--percolation} if $\bbP$ is such that
for some $L\in\bbN$ the random field \be\la{sil} \s= \bigl( \s(x)
\,:\; x \in \bbZ^d\bigr)\,, \quad\s (x) :=
 \chi \bigl( S ( Q_{Lx, L})\geq 1 \bigr)\,,\end{equation}
stochastically dominates the i.i.d.\ Bernoulli field on $\bbZ^d$ with some super--critical parameter
$p$. Here $\chi(\cdot)$ stands for the indicator function of an event.
In particular, it is easily checked that any homogeneous
PPP dominates  super--critical Bernoulli site--percolation.
For DPP defined on $\bbZ^d$ stochastic domination w.r.t.\ Bernoulli can be obtained under suitable hypothesis on the kernel $K$, see \cite{LyonsSteif}.
We are not aware of analogous conditions in
the continuum
that would imply that DPP dominates  super--critical Bernoulli site--percolation. In the latter cases we have to content ourselves with
point (ii) of Theorem \ref{th1} (which implies point 3 in Corollary \ref{cor100} below). We summarize our conclusions for $\varphi=\varphieb$
in the following
\begin{Cor}\la{cor100}

\smallskip
\noindent
1. Let $\bbP$ be any of the processes considered in Corollary
\ref{cor1}. Then $(S,\varphieb)$ is a.s.\ recurrent in $d=1,2$, for any $\b>0$.

\smallskip
\noindent
2. Let $\bbP$ be the law of the
infinite cluster in super--critical Bernoulli site--percolation on
$\bbZ^d$ or a homogeneous PPP or any other process that
dominates  super--critical Bernoulli site--percolation.
Then $(S,\varphieb)$ is a.s.\ transient in $d\geq 3$, for any $\b>0$.

\smallskip
 \noindent
3. Let $\bbP$ be any stationary DPP.
Then $(S,\varphieb)$ is $\bbP$--a.s.\ transient in $d\geq 3$, for any $\b\in(0, d)$.
\end{Cor}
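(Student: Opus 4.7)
I will treat the three parts in turn, invoking the three items of Theorem \ref{th1}, together with Lemma \ref{lemmino} and the Grimmett--Kesten--Zhang theorem for the second part.

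\smallskip
\noindent\emph{Part 1 (recurrence in $d=1,2$).} I verify the hypotheses of Theorem \ref{th1}(iii). The second--moment bound (\ref{rick2}) is immediate: for the super--critical percolation cluster $S\subset\bbZ^d$ forces $S(Q_{x,1})\le 1$; for the homogeneous PPP the counts are Poisson with constant rate; for a stationary DPP, $S(Q_{x,1})$ is distributed as a sum of independent Bernoulli variables whose parameters sum to $\int_{Q_{x,1}}K(u,u)\,du = K(0,0)$ by stationarity, so every integer moment is uniformly bounded in $x$. It then remains to show that $(\bbZ^d,\varphi_0)$ is recurrent, where $\varphi_0$ is built from $\varphieb$ through (\ref{fi0}). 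Monotonicity of $t\mapsto e^{-t^{\b}}$ gives
\begin{equation*}
\varphi_0(x,y)\le \exp\bigl(-(|x-y|-\sqrt d\,)_+^{\b}\bigr)\le C\,\varphieb(|x-y|)
\end{equation*}
on $\bbZ^d$ for a suitable $C=C(\b,d)$; hence by Rayleigh monotonicity recurrence of $(\bbZ^d,\varphi_0)$ reduces to recurrence of $(\bbZ^d,\varphieb)$ in $d=1,2$, which is the classical Spitzer fact recalled in the introduction.

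\smallskip
\noindent\emph{Part 2 (transience in $d\ge 3$).} For the super--critical percolation cluster, set $\varphi^{\rm nn}(t)=\chi(0<t\le 1)$. The Grimmett--Kesten--Zhang theorem provides transience of $(S,\varphi^{\rm nn})$ in $d\ge 3$. Since $\varphi^{\rm nn}(t)\le e\,\varphieb(t)$ pointwise, Lemma \ref{lemmino}(1) (with $\bbP=\bbP'$ and the trivial coupling $S=S'$) transports transience to $(S,\varphieb)$. For the homogeneous PPP, or more generally any process dominating super--critical Bernoulli site--percolation after coarse--graining in the sense of (\ref{sil}), I would pick $L$ as in (\ref{sil}), measurably select a representative $y_x\in S\cap Q_{Lx,L}$ in each occupied cell, and route a finite--energy unit flux on the infinite $\s$--cluster (supplied by GKZ) through consecutive representatives, defining it to be zero on all other pairs. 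Neighboring representatives lie within Euclidean distance $2L\sqrt d$, so the conductances $\varphieb(|y_x-y_{x'}|)\ge \varphieb(2L\sqrt d)$ are uniformly bounded away from zero and the $(S,\varphieb)$--energy of the lifted flux is finite. The Royden--Lyons criterion then yields transience.

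\smallskip
\noindent\emph{Part 3 (stationary DPP, $\b<d$).} I will apply Theorem \ref{th1}(ii). By stationarity and the determinantal structure, $\bbP(S(B_{x,t})=0)$ is independent of $x$ and equals the Fredholm determinant $\det(I-\cK\chi_{B_{x,t}})=\prod_i(1-\lambda_i)$ where $(\lambda_i)\subset[0,1]$ are the eigenvalues of $\cK\chi_{B_{x,t}}$. Using $1-\lambda\le e^{-\lambda}$ and (\ref{traccia}),
\begin{equation*}
\psi(t)\le \exp\Bigl(-\sum_i\lambda_i\Bigr)=\exp\bigl(-\tr(\cK\chi_{B_{x,t}})\bigr)=\exp\bigl(-K(0,0)\,|B_{x,t}|\bigr)\le e^{-c\,t^{d}}
\end{equation*}
for some $c>0$. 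Since $\b<d$, condition (\ref{trans20}) holds for every $a>0$. Reading the flux construction in the proof of Theorem \ref{th1}(ii), the admissible exponent $\d(a,\b)$ grows with $a$; choosing $a$ large enough that $\d(a,\b)\ge 1$, the hypothesis $\varphieb(t)=e^{-t^{\b}}\ge e^{-\d t^{\b}}$ becomes automatic and (ii) delivers transience of $(S,\varphieb)$.

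\smallskip
\noindent\emph{Expected main obstacle.} The most delicate point is the coarse--graining step in Part 2: the measurable selection of representatives, and the verification that rerouting the GKZ flux through them preserves the unit divergence while the energy picks up only the $L$--dependent factor $\varphieb(2L\sqrt d)^{-1}$. In Part 3, the DPP bound on $\psi$ is clean, but one must track through the construction underlying Theorem \ref{th1}(ii) to confirm that $\d(a,\b)\to\infty$ as $a\to\infty$, since otherwise one cannot reach $\d\ge 1$ from a mere existence statement.
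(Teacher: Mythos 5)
Your proposal follows the paper's own route in all three parts: Theorem \ref{th1}(iii) plus the verification of (\ref{rick2}) for part 1; the Grimmett--Kesten--Zhang theorem combined with coarse--graining and rerouting of the flux through one representative point per occupied cell for part 2 (the paper makes the coupling explicit via Strassen's theorem, which is the step you leave implicit when you speak of the flux ``on the infinite $\s$--cluster supplied by GKZ''); and the bound $\psi(t)\le e^{-\d t^d}$ from Lemma \ref{domenica} fed into Theorem \ref{th1}(ii) for part 3. Your observation that one must check that $\d(a,\b)$ can be pushed above $1$ by taking $a$ large is a legitimate point the paper glosses over; an alternative that uses part (ii) purely as a black box is to apply it with an exponent $\b'\in(\b,d)$, since $\varphieb(t)=e^{-t^{\b}}\ge c\,e^{-\d t^{\b'}}$ for any fixed $\d>0$ and a suitable $c$.

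The one step that fails as written is the inequality $\exp\bigl(-((|x-y|-\sqrt d)\vee 0)^{\b}\bigr)\le C\,\varphieb(|x-y|)$ in part 1: for $\b>1$ the difference $t^{\b}-(t-\sqrt d)^{\b}\sim \b\sqrt d\,t^{\b-1}$ is unbounded in $t$, so no constant $C$ exists. The conclusion you are after (recurrence of $(\bbZ^d,\varphi_0)$ in $d=1,2$) is still correct and is exactly what the paper asserts when it says $(\bbZ^d,\varphi_0)$ and $(\bbZ^d,\varphieb)$ have the same type; but you should obtain it by noting instead that $\varphi_0(x,y)\le C\exp(-2^{-\b}|x-y|^{\b})$ (since $t-\sqrt d\ge t/2$ once $t\ge 2\sqrt d$) and that any symmetric step distribution on $\bbZ^d$ with stretched--exponential tails, whatever the constant in the exponent, has finite second moments and is therefore recurrent in $d=1,2$ by the classical criterion. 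With that one--line repair the argument is complete.
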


We point our that, by the same proof, point 2) above remains
true if $(S,\varphieb)$ is replaced by  $(S,\varphi)$, $\varphi \in
\Phi_d$.

\subsection{Bounds on finite volume effective resistances}
When a network $(S,\varphi)$ is recurrent the effective resistances
$R_n(x)$ associated to the finite sets   $S_n:=S\cap [-n,n]^d $
diverge, see (\ref{var_res}), and we may be interested in obtaining
quantitative information on their growth with $n$. We shall consider
in particular the case of point processes in dimension $d=1$, with
$\varphi=\varphipa$, $\a\in[1,\infty)$, and the case $d=2$ with
$\varphi=\varphipa$, $\a\in[2,\infty)$. By Rayleigh's monotonicity
principle, the bounds given below apply also to $(S,\varphi)$,
whenever $\varphi \leq C \varphipa$. In particular, they cover the
stretched exponential case $(S,\varphieb)$.

We say that the point process $\bbP$ is {\em dominated by an i.i.d.\ field} if the
following condition holds: There exists
$L\in\bbN$ such that the random field
$$N_L= \bigl( N(v) \,:\;
v\in \bbZ^d\bigr)\,, \quad N (v) :=
 S ( Q_{Lv, L})\,,$$
is stochastically dominated by independent non--negative random variables
$\{\G_v,\;v\in\bbZ^d\}$ with finite expectation.

For the results in $d=1$ we shall require the following exponential moment condition
on the dominating field $\G$:
There exists $\e>0$ such that \be\la{expe}
\bbE[\nep{\e\,\G_v}]<\infty\,.
\end{equation}
For the results in $d=2$ it will be sufficient to require the existence of the
fourth moment:
\be\la{fourth}
\bbE\left[\G_v^4\right]<\infty\,.
\end{equation}

It is immediate to check that any homogeneous PPP is dominated by an i.i.d.\ field
in the sense described above and the dominating field $\G$ satisfies (\ref{expe}).
Moreover, this continues to hold for non--homogeneous Poisson process with a uniformly bounded intensity function. We refer the reader to \cite{LyonsSteif,GK} for examples of determinantal
processes satisfying this domination property.


\begin{Th}\label{fegato}
Set $d=1$, $\varphi = \varphipa$ and $\a\geq 1$. Suppose that the
point process $\bbP$ is dominated by an i.i.d.\ field satisfying
(\ref{expe}). Then, for $\bbP$--a.a.\ $S$ the network $(S
,\varphi)$ satisfies:  given $x\in S$   there exists a constant
$c>0$ such that
\begin{equation}\la{rn1}
R_n(x) \geq  c
\begin{cases} \log n  & \text{ if } \a=1\,,\\
n^{\a-1} & \text{ if } 1<\a<2\,,\\
n/\log n & \text{ if } \a=2,,\\
n & \text{ if } \a>2\,,
\end{cases}
\end{equation}
for all $n\geq 2$ such that $x \in S_n$.
\end{Th}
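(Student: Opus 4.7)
The plan is to invoke the Dirichlet variational principle \eqref{var_con}: for any test potential $h:S\to[0,1]$ with $h(x)=0$ and $h\equiv 1$ on $S_n^c$,
\[
C_n(x)\;\leq\;\tfrac{1}{2}\cE(h),\qquad \cE(h):=\sum_{y\neq z\in S}\varphipa(|y-z|)\bigl(h(y)-h(z)\bigr)^2,
\]
so that $R_n(x)\geq 2/\cE(h)$. The task is thus to exhibit, in each regime of $\a$, a radial profile $h$ centered at $x$ whose Dirichlet energy has the claimed order. The sum will be estimated by a dyadic decomposition in $|y-z|$ (and, in the critical regime, also in $|y-x|$), with the number of points of $S$ in each dyadic interval controlled by the dominating i.i.d.\ field $\{\G_v\}$.

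For $1<\a<2$, $\a=2$, and $\a>2$ I use the Lipschitz profile $h(y)=\min(|y-x|,n)/n$. For the critical case $\a=1$, where the Lipschitz choice only yields $\cE(h)=O(1)$, I instead use the logarithmic profile $h(y)=\min\bigl(\log(1+|y-x|),\log n\bigr)/\log n$, which mimics the electric potential of the reference lattice walk $(\bbZ,\varphi_{{\rm p},1})$.

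With the Lipschitz choice, pairs at scale $|y-z|\asymp 2^j$, $2^j\le n$, contribute (per element of $S_n$) at most $O(2^j)$ in number, each weighted by $\varphipa(2^j)\asymp 2^{-j(1+\a)}$ and $(h(y)-h(z))^2\le(2^j/n)^2$, giving a shell bound $\lesssim 2^{j(2-\a)}/n$; summing $0\leq j\leq\log_2 n$ and combining with the matching long--range ($|y-z|>n$) and boundary ($z\in S_n^c$) contributions yields $\cE(h)\lesssim n^{-\a}$, $(\log n)/n$, and $1/n$ in the three regimes. For $\a=1$, the same bookkeeping applied to the logarithmic profile, after separating pairs by $|y-z|\asymp 2^j\leq 2^i\asymp |y-x|$ and using $(h(y)-h(z))^2\lesssim(2^{j-i}/\log n)^2$, gives $\cE(h)\lesssim 1/\log n$.

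Pathwise control comes from the exponential moment \eqref{expe}. A Chernoff and Borel--Cantelli argument shows that, $\bbP$-almost surely, $\max_{|v|\leq n/L}\G_v\leq C\log n$ for all large $n$, so that every interval $I\subset[-n,n]$ contains at most $C(|I|/L+\log n)$ points of $S$, uniformly in $n$. Inserting this pathwise density bound into the dyadic estimates of the previous paragraph produces \eqref{rn1}, with the random constant $c=c(x,S)$ absorbing the realization of $\{\G_v\}$ and the position of the starting point. The main technical obstacle will be the $\a=1$ case: the double dyadic sum in the radial scale $2^i$ and the pair-separation scale $2^j\leq 2^i$ must be assembled without losing a $\log n$ factor, both for same--sign pairs (where $\bigl|\log\tfrac{1+|y-x|}{1+|z-x|}\bigr|\lesssim |y-z|/\max(|y-x|,1)$) and for cross--sign pairs (where the large separation $|y-z|\asymp|y-x|+|z-x|$ makes $\varphipa(|y-z|)$ itself small enough); the boundary edges to $S_n^c$ are treated by the analogous dyadic extension, again relying only on the density control supplied by \eqref{expe}.
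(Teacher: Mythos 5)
Your overall strategy --- the Dirichlet principle \eqref{var_con} with an explicit radial trial potential, linear for $\a>1$ and logarithmic at $\a=1$ --- is in essence the same as the paper's: after collapsing each cube $Q_{vL,L}$ to a node and folding $v\mapsto -v$, the paper tests \eqref{var_con} (see \eqref{est1}) with the profile $f_i/f_n$, where $f$ solves \eqref{ed} and has exactly the asymptotics \eqref{asymp} that your two profiles reproduce. Your deterministic bookkeeping is essentially sound, apart from one slip: for $1<\a<2$ the dyadic sum $\sum_{0\le j\le\log_2 n}2^{j(2-\a)}/n$ is of order $n^{1-\a}$, not $n^{-\a}$ (the corrected value still yields $R_n\geq c\,n^{\a-1}$, so this is harmless).

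The genuine gap is in the pathwise control of the random sums. A sup-norm bound $\max_{|v|\le n/L}\G_v\le C\log n$ --- and the interval bound you deduce from it, which in fact only gives $C(|I|/L+1)\log n$, not $C(|I|/L+\log n)$ --- is too weak, because the Dirichlet energy is \emph{quadratic} in the local point counts: a cube carrying $k$ points creates $\asymp k^2$ pairs at separation $O(1)$. Concretely, for $\a>2$ the pairs with $|y-z|=O(1)$ contribute up to $\sum_{|v|\le n}\G_v^2\cdot n^{-2}\le C n\log n\cdot n^{-2}=(\log n)/n$ to $\cE(h)$ under your bound, so you only obtain $R_n\gtrsim n/\log n$ instead of the claimed $n$; for $\a=1$ the pairs with $|y-z|=O(1)$ and $|y-x|=O(1)$ contribute $\asymp(\log n)^{-2}\sum_v\G_v^2(1+|v|)^{-2}$, which under your bound is only $O(1)$ rather than $o(1/\log n)$, destroying the $1/\log n$ target entirely (the case $\a=2$ survives only because the target is itself $(\log n)/n$). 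What is needed is law-of-large-numbers or Chernoff control of the specific weighted quadratic sums $\sum_{u,v}\G_u\G_v\,w(u,v)$, exploiting independence of the $\G_v$'s. This is exactly how the paper proceeds: the deterministic Lemma \ref{est10} bounds $\sum_{j>i}(j-i)^{-1-\a}(f_j-f_i)^2\le Cg_\a(i)$; an exponential Chernoff bound based on \eqref{expe} and the product structure of $\bbE[\exp(a_i\xi_i)]$ shows $\xi_i=\sum_{j>i}\wt\G_j(j-i)^{-1-\a}(f_j-f_i)^2\le Cg_\a(i)$ eventually a.s.; a second Chernoff bound controls $\sum_{i\le n}\wt\G_i g_\a(i)\le Cf_n$; and the case $\a>2$ is treated separately by the ergodic theorem. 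Your argument would close if you replaced the uniform $\log n$ bound by control of this type; as written, the cases $\a=1$ and $\a>2$ each fall short by a logarithmic factor.
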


\begin{Th}\la{combine}
Set $d=2$,  $\varphi = \varphipa$ and $\a\geq 2$. Suppose that
$\bbP$ is dominated by an i.i.d.\ field satisfying (\ref{fourth}).
Then, for $\bbP$--a.a.\ $S$ the network $(S ,\varphi)$
satisfies: given  $x\in S$ there exists a constant $c>0$ such that
\begin{equation}\la{rn0}
R_n (x) \geq c
\begin{cases} \log n & \text{ if } \a >2\,,\\
\log (\log n) & \text{ if } \a=2\,,
\end{cases}
\end{equation}
for all $n\geq 2$ such that $x \in S_n$.
\end{Th}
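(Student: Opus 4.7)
The plan is to apply the variational characterization (\ref{var_con}) of the effective conductivity to an explicit test function built by coarse--graining at a fixed scale $L$. By Rayleigh's monotonicity I reduce to $\varphi=\varphipa$. Partitioning $\bbR^2$ into cells $\{Q_{Lv,L}\}_{v\in\bbZ^2}$ and setting $N(v)=S(Q_{Lv,L})$, the domination hypothesis gives a coupling with an i.i.d.\ field $\{\G_v\}$, $\bbE\G_v^4<\infty$, satisfying $N(v)\leq\G_v$. For $y\in Q_{Lv,L}$, $z\in Q_{Lw,L}$, $v\ne w$, one has $|y-z|\asymp L|v-w|$ and therefore $\varphi(|y-z|)\leq c_1\varphi_0(v-w)$ with $\varphi_0(u):=1\wedge|u|^{-2-\a}$. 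With $v(x)$ the index of the cell containing $x$, I take the test function $h(y):=h_0(v(y))$, where on $\bbZ^2$
\[h_0(v):=\min\bigl(1,\,\log(1+|v-v(x)|)/\log(n/(2L))\bigr)\quad(\a>2),\]
\[h_0(v):=\min\bigl(1,\,\log\log(e^e+|v-v(x)|)/\log\log(n/(2L))\bigr)\quad(\a=2).\]
Then $h(x)=0$, $h\equiv 1$ on $S_n^c$ for $n$ large, and
\[\cE(h)\leq c_1\sum_{v\ne w}a(v,w)\G_v\G_w=:D_n,\qquad a(v,w):=\varphi_0(v-w)(h_0(v)-h_0(w))^2.\]

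A direct calculation, splitting the sums according to whether $|v-w|\leq|v-v(x)|/2$ or not, yields the classical Dirichlet--form scalings for long--range walks on $\bbZ^2$: $A_n:=\sum_{v\ne w}a(v,w)\leq C_2/\log n$ for $\a>2$ and $A_n\leq C_2/\log\log n$ for $\a=2$, together with the sharper column bound $\max_v\sum_w a(v,w)\leq C_2/(\log n)^2$ (resp.\ $C_2/(\log\log n)^2$). By independence of $\{\G_v\}$ and assumption (\ref{fourth}), $\bbE D_n=m^2A_n$ and
\[\Var(D_n)\leq C_3\Big[\sum_{v\ne w}a(v,w)^2+\sum_v\Big(\sum_w a(v,w)\Big)^2\Big]\leq C_3'\,A_n\cdot\max_v\sum_w a(v,w),\]
so $\Var(D_n)/(\bbE D_n)^2=O(1/\log n)$ (resp.\ $O(1/\log\log n)$). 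Chebyshev then gives $\bbP(D_n>2\bbE D_n)\leq C_4/\log n$ (resp.\ $C_4/\log\log n$).

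To pass to an a.s.\ statement I use a subsequence--then--interpolate argument: along $n_k=\exp(k^2)$ for $\a>2$ (resp.\ $n_k=\exp\exp(k^2)$ for $\a=2$) the Chebyshev probabilities are summable, so Borel--Cantelli yields $D_{n_k}\leq 2\bbE D_{n_k}\leq C_5 A_{n_k}$ eventually, hence $R_{n_k}(x)\geq 1/D_{n_k}\geq c\log n_k$ (resp.\ $c\log\log n_k$). Monotonicity of $n\mapsto R_n(x)$ and the bounded ratio $\log n_{k+1}/\log n_k$ (resp.\ $\log\log n_{k+1}/\log\log n_k$) extend the bound to all $n\geq n_0$, producing (\ref{rn0}). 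The main difficulty is the concentration step: under only four moments Chebyshev provides merely an inverse--logarithmic tail, which forces the doubly--exponential subsequence in the critical case $\a=2$; the remainder is essentially the bookkeeping of short-- and long--range contributions to $A_n$ and to its column sums.
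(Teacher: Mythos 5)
Your proof is correct, and it takes a genuinely different route from the paper's. The paper first collapses each sphere $F_a=\{u:\|u\|_\infty=a\}$ of the coarse--grained network to a single node and then uses a series--decomposition trick (adding fictitious intermediate points on each long wire, an idea going back to \cite{K}) to reduce to a nearest--neighbor one--dimensional chain; the conductance $\phi_i$ of each link is then controlled individually via the fourth--moment estimate $\bbE[(\phi_i-\bbE\phi_i)^4]\leq Ci^2$ and Borel--Cantelli over $i$, and the resistances are summed in series. You instead apply the Dirichlet principle \eqref{var_con} directly with the radially logarithmic (resp.\ iterated--logarithmic) test function and control the \emph{whole} Dirichlet form $D_n$ at once by its variance. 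The authors themselves remark after the proof that the variational route is viable but assert it requires a fifth moment; that is because their sketch demands the pointwise a.s.\ bound \eqref{estra} on every row sum, obtained by a union bound over all pairs $(a,b)$. Your global variance computation sidesteps this: since $v\neq w$ and $v'\neq w'$, no $\G_u$ appears to a power higher than two in $\Cov(\G_v\G_w,\G_{v'}\G_{w'})$, so in fact you only need $\bbE\G_v^2<\infty$, strictly weaker than \eqref{fourth}. The price is that Chebyshev yields only an $O(1/\log n)$ (resp.\ $O(1/\log\log n)$) failure probability, which forces the sparse subsequence $n_k=\exp(k^2)$ (resp.\ $\exp\exp(k^2)$) and the interpolation via monotonicity of $R_n$ and slow variation of $\log$ -- a step that is valid but must be stated, as you do. Two small points of bookkeeping you leave implicit: the matching lower bound $A_n\geq c/\log n$ (resp.\ $c/\log\log n$), needed so that $(\bbE D_n)^2$ is not too small in the Chebyshev ratio, follows already from the nearest--neighbor part of the sum; and your bounds on $A_n$ and on the column sums $\max_v\sum_w a(v,w)$, stated as "a direct calculation", are exactly the discrete analogues of the estimates the paper isolates in Lemma \ref{bimbo}, so they deserve at least that much detail. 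In exchange for the weaker concentration, your argument is more elementary (no network surgery, no fourth--moment expansion \eqref{incantesimo}) and holds under weaker moment hypotheses; the paper's argument gives the bound at every scale $n$ directly and produces the reusable nearest--neighbor reduction.
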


The proof of Theorem \ref{fegato} and Theorem \ref{combine} is given
in Section \ref{coraggio}. The first step is to reduce the network
 $(S ,\varphi)$ to a simpler network by using the domination
assumption. In the proof of Theorem \ref{fegato} the effective
resistance of this simpler network is then estimated using the
variational principle (\ref{var_con}) with suitable trial functions.

In the proof of Theorem \ref{combine} we are going to exploit a further reduction of the
network which ultimately leads to a one--dimensional nearest neighbor network where effective
resistances are easier to estimate. This construction uses an idea already appeared in \cite{K}, see also \cite{CF2} and \cite{ABS} for recent applications,
which allows to go from long--range to nearest neighbor networks, see Section \ref{coraggio} for
the details. Theorem \ref{combine} could be also proved using the variational principle
(\ref{var_con}) for suitable choices of the trial function, see the remarks in Section \ref{coraggio}.

It is worthy of note that the proofs of these results are
constructive in the sense that they do not rely on results already
known for the corresponding $(\bbZ^d,\varphipa)$ network. In
particular, the method can be used to obtain quantitative lower
bounds on $R_n(x)$ for the deterministic case $S\equiv \bbZ^d$,
which is indeed a special case of the theorems.  In the latter case
the lower  bounds obtained   here as well suitable upper bounds  are
probably well known but we were not able to find references to that
in the literature. In appendix \ref{losangeles}, we show how to
bound from above   the effective resistance $R_n(x)$ of the network
$(\bbZ^d,\varphipa)$ by means of harmonic analysis. The resulting
upper bounds match with the lower bounds of Theorems \ref{fegato}
and \ref{combine}, with exception of the case $d=1$, $\a=2$ where
our upper and lower bounds differ by a factor $\sqrt{\log n}$.


\subsection{Constructive proofs of transience}
While the transience criteria summarized in Corollary \ref{cor1} and
Corollary \ref{cor100} are based on known results for the
deterministic networks $(\bbZ^d,\varphi)$ obtained by
classical harmonic analysis, it is possible to give constructive
proofs of these results by exhibiting explicit fluxes with finite
energy on the network under consideration. We discuss two results
here in this direction. The first gives an improvement over the
criterium in Theorem \ref{th1}, part (i), in the case $d=1$. This
can be used in particular to give a ``flux--proof'' of the well known
fact that $(\bbZ,\varphipa)$ is transient for $\a<1$. The second
result gives a constructive proof of transience of a deterministic
network, which, in turn, reasoning as in the proof of Theorem
\ref{th1} part (i), gives a flux--proof that $(\bbZ^2,\varphipa)$
is transient for $\a<2$.

%
In order to state the one-dimensional result, it is convenient
to number   the points of $S$  as $S=\{x_i\} _{i \in I}$ where $x_i
<x_{i+1}$, $x_{-1}< 0 \leq x_0$ and $\bbN\subset I $ or
$-\bbN \subset I$ (we assume that $|S|=\infty$,
$\bbP$--a.s., since otherwise the network is recurrent).
For simplicity of notation we assume below that
$\bbN\subset I$, $\bbP$--a.s. The following result can be easily
extended to the general case by considering separately the
conditional probabilities $\bbP(\cdot | \bbN \subset I)$ and
$\bbP(\cdot |\bbN\not \subset I)$,  and applying a symmetry argument
in the second case.
\begin{Pro}\label{treno1}
Take  $d=1$ and $\a\in (0,1)$. Suppose that for some positive
constants $c,C$ it holds
\begin{align}
& \varphi (t) \geq c \varphipa (t)\,, \qquad  t>0 \,,\\
& \bbE \bigl (\, |x_n-x_k|^{1+\a}\, \bigr)\leq C \,(n-k)^{1+\a}\,,
\qquad \forall n>k\geq 0\,. \label{momenti}
\end{align}
Then $\bbP$--a.s. $(S,\varphi)$ is transient. In particular, if
$\bbP$ is a renewal point process such that
\begin{equation}\label{momentibis}
\bbE (|x_1-x_0|^{1+\a}) <\infty \,,\end{equation} then $\bbP$--a.s.
$(S,\varphi)$ is transient.
\end{Pro}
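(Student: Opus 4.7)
The plan is to use the Royden--Lyons criterion: I will construct, for $\bbP$--a.e.\ $S$, a unit flux from $x_0$ to infinity on $(S,\varphi)$ with finite energy. By Rayleigh's monotonicity principle (or Lemma \ref{lemmino}) it suffices to treat $\varphi=\varphipa$, whose associated resistance is $r(t)=1\vee t^{1+\a}\leq 1+t^{1+\a}$. The flux on $S$ will be obtained by first constructing an explicit flux on the half--line $\bbZ_+$ and then transporting it to $S$ via the bijection $n\leftrightarrow x_n$.

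For the lattice flux I would use a dyadic cascade: set $B_{-1}=\{0\}$ and $B_k=\{2^k,\dots,2^{k+1}-1\}$ for $k\geq 0$, and let
\begin{equation*}
g(i,j):=\frac{1}{|B_{k-1}|\,|B_k|},\qquad i\in B_{k-1},\; j\in B_k,\; k\geq 0,
\end{equation*}
extended antisymmetrically, with $g\equiv 0$ on all other pairs. Conservation at any $i\in B_{k-1}$ is immediate because both the inflow from $B_{k-2}$ and the outflow to $B_k$ equal $|B_{k-1}|^{-1}$. Since $|j-i|\leq 2^{k+1}$ on level--$k$ edges and $|B_{k-1}|\,|B_k|=2^{2k-1}$ for $k\geq 1$, the contribution of level $k$ to the lattice energy $\sum_{i<j}r(|j-i|)g(i,j)^2$ is of order $2^{(\a-1)k}$, hence summable precisely because $\a<1$.

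To transport, set $f(x_i,x_j):=g(i,j)$; this is an antisymmetric unit flux from $x_0$ to infinity on $S$. The bound $r(t)\leq 1+t^{1+\a}$ combined with hypothesis (\ref{momenti}) yields $\bbE[r(|x_j-x_i|)]\leq 1+C(j-i)^{1+\a}\leq C'(j-i)^{1+\a}$ for $j>i$, and consequently
\begin{equation*}
\bbE[\cE(f)]\leq C'\sum_{i<j}(j-i)^{1+\a}\,g(i,j)^2<\infty
\end{equation*}
by the previous step. Hence $\cE(f)<\infty$ $\bbP$--a.s., and the Royden--Lyons criterion gives transience $\bbP$--a.s. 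For the renewal corollary, writing $x_n-x_k$ as a sum of $n-k$ i.i.d.\ non--negative variables and applying the power mean inequality $(a_1+\dots+a_m)^{1+\a}\leq m^{\a}\sum_i a_i^{1+\a}$ (valid since $1+\a\geq 1$) yields hypothesis (\ref{momenti}) from (\ref{momentibis}).

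The main obstacle is choosing a lattice flux whose energy is summable for every $\a<1$ while remaining robust under the random dilation $i\mapsto x_i$; the dyadic cascade accomplishes this because its level--$k$ energy decays like $2^{(\a-1)k}$, and the moment hypothesis allows one to replace each random distance $|x_j-x_i|$ by the integer distance $j-i$ up to a multiplicative constant in expectation. Notice also that the construction is purely one--dimensional, so it does not rely on any previously known transience result for $(\bbZ,\varphipa)$, and in the special case $S=\bbZ$ it indeed provides the flux--proof alluded to before the statement.
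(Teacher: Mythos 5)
Your proof is correct, and it follows the paper's overall transport strategy while differing in the key construction. Like the paper, you build a deterministic unit flux on the index set $\bbZ_+$, push it forward through the bijection $n\mapsto x_n$, and control the expected $(S,\varphipa)$--energy via the moment hypothesis (\ref{momenti}) together with Tonelli, concluding by the Royden--Lyons criterion; the renewal corollary is handled by the same convexity argument in both proofs. Where you genuinely diverge is in the choice of the lattice flux. The paper takes $f(x_i,x_k)=f(i)q_{k-i}$ with the power--law kernel $q_k=c(\d)k^{-\d}$, $\d\in(1,2)$, supported on \emph{all} pairs $i<k$, and needs Lemma \ref{patrono} (which rests on Doney's renewal theorem in the infinite--mean case) to get the asymptotics $f(n)\sim n^{\d-2}$ before the energy sum $\sum_n n^{2\d-4}\sum_u u^{1+\a-2\d}$ can be closed for a suitable $\d$. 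Your dyadic cascade is supported only on pairs of consecutive blocks $B_{k-1},B_k$, the conservation check is a one--line computation, and the level--$k$ energy bound $O(2^{(\a-1)k})$ is immediate, so no external renewal theory is required: the argument is entirely self--contained and arguably more elementary. What the paper's heavier choice buys is a flux that reappears essentially verbatim in the two--dimensional construction of Theorem \ref{alex} (flow spread over all rings with weights $q_{n-m}$), so the renewal--kernel machinery is amortized over both results; your cascade would need to be redesigned to serve that second purpose. Both proofs yield, in the special case $S=\bbZ$, a constructive flux--proof of the transience of $(\bbZ,\varphipa)$ for $\a<1$, as claimed.
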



Suppose that $\bbP$ is a renewal point process and write
$$\wt \psi (t):=
\bbP(x_1-x_0 \geq t)\,.
$$
Then (\ref{momentibis}) certainly holds as soon as e.g.\ $\wt \psi$
satisfies $\wt\psi (t) \leq C t^{-(1+\a+\e)}$ for some positive
constants $C,\e$. We can check that this improves substantially over
the requirement in Theorem \ref{th1}, part (i), since if $\psi$ is
defined by (\ref{psipsi}), then we have, for all  $t>1$:
$$ \tilde \psi (2t) = \bbP ( S \cap B_{x_0+t,t}  = \emptyset )
\leq \psi (t-1) \,.$$

\smallskip

The  next  result concerns the deterministic two-dimensional
network $(S_*,\varphipa)$ defined as follows.
%
Identify $\bbR^2$ with the complex
plane $\bbC$, and define the set $ S_*:= \cup_{n=0}^\infty C_n$,
where
\begin{equation}\label{def_cn}
C_n:= \left\{ n e^{k\frac{2i\pi}{n+1}} \in {\mathbb C} :\: k\in \{0,
\dots, n\} \right\}\,.
\end{equation}
%
\begin{Th}\label{alex}
The network $(S_*, \varphipa)$ is transient for all $\a\in(0,2)$.
\end{Th}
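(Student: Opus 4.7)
The strategy is to apply the Royden--Lyons criterion (equivalently, the variational principle \eqref{var_res_tot}) by constructing an explicit unit flux $f$ from the origin to infinity on $(S_*,\varphipa)$ with finite energy. The plan is to exploit the concentric circle structure via a dyadic decomposition: for each $k\geq 0$ set $A_k:=\bigcup_{n=2^k}^{2^{k+1}} C_n$, a dyadic annulus containing $\Theta(2^{2k})$ points whose pairwise distances are all of order $2^k$.

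Inside $A_k$ I will construct a sub-flux $f_k$ carrying unit mass from $C_{2^k}$ to $C_{2^{k+1}}$, with uniform source density $1/(2^k+1)$ on $C_{2^k}$, uniform sink density $1/(2^{k+1}+1)$ on $C_{2^{k+1}}$, and divergence zero on every intermediate circle $C_n$, $2^k<n<2^{k+1}$. Then $f:=\sum_{k\geq 0} f_k$ is a unit flux with divergence $1$ at the origin (from $f_0$) and $0$ everywhere else, since the sources of $f_k$ on $C_{2^k}$ exactly cancel the sinks of $f_{k-1}$. The edge supports of the $f_k$'s being essentially disjoint, one has $\cE(f)=\sum_k\cE(f_k)$.

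The decisive step is the annulus bound $\cE(f_k)\leq C\cdot 2^{k(\alpha-2)}$; granting it and using $\alpha<2$, the series $\sum_k 2^{k(\alpha-2)}$ converges and gives $\cE(f)<\infty$, proving transience. This bound mirrors the continuum scaling for the $\alpha$-stable walk in $\bbR^2$, whose Green function satisfies $G(x)\sim |x|^{\alpha-2}$ and whose annulus effective resistance at scale $L$ is of order $L^{\alpha-2}$. To realize the bound I would take $f_k$ as a weighted superposition of a short-range radial transport component (routing each $x\in C_n$ to its angular nearest-neighbors on $C_{n+1}$ via a Monge-type coupling between the uniform measures on the two circles, using only $O(1)$-length edges) and a long-range diagonal component carrying a fraction of the mass directly between distant circles inside $A_k$, with mixing weights tuned to a discrete analog of the potential $g(r)\sim r^{\alpha-2}$. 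Neither piece suffices alone: the short-range component alone yields energy $\sum_{n=2^k}^{2^{k+1}} O(1/n)=O(1)$ per annulus (which sums to infinity over $k$), while the uniform direct-jump component yields $O(2^{k\alpha})$; only the balanced combination achieves $2^{k(\alpha-2)}$.

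The main obstacle is the rigorous proof of the annulus bound. The mixing weights must be chosen so that the combined flux simultaneously (i) preserves the uniform source/sink densities on $C_{2^k}$ and $C_{2^{k+1}}$, (ii) is divergence-free on every intermediate circle, and (iii) attains the $2^{k(\alpha-2)}$ scaling with explicit constants. Handling these coupled constraints requires using the exact rotational symmetry within each individual $C_n$ (rotation by $2\pi/(n+1)$) together with careful angular partitioning estimates to control the cross-term contributions to $\cE(f_k)$ and to reconcile the finite discrete resolution of $S_*$ with the continuous $\alpha$-stable potential.
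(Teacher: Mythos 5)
Your overall strategy (Royden--Lyons via an explicit finite--energy unit flux) is the same as the paper's, and your target scaling $\cE(f_k)\leq C\,2^{k(\a-2)}$ per dyadic annulus is the correct one. But the proof has a genuine gap exactly where you flag it: that annulus bound is the entire mathematical content of the theorem, and your proposal does not construct the flux $f_k$ that achieves it. Saying that $f_k$ is a superposition of a short--range radial component and a long--range diagonal component ``with mixing weights tuned to a discrete analog of the potential $r^{\a-2}$'' is a heuristic, not a construction: you do not specify the weights, you do not verify that the combined flux satisfies the divergence constraints (uniform source/sink on the boundary circles, divergence zero on the intermediate ones), and you do not carry out the energy estimate. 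Since you yourself computed that each of the two extreme components fails (energy $O(1)$ per annulus for the short--range piece, $O(2^{k\a})$ for the uniform direct jump), the burden of proof is precisely to exhibit the interpolation and bound its energy, and that is missing. (There is also a small bookkeeping slip: with $A_0=C_1\cup C_2$ the origin $C_0=\{0\}$ is not the source of any $f_k$, so an initial piece of flux from $0$ to $C_1$ must be added; this is trivial to fix.)

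For comparison, the paper avoids the annulus--by--annulus optimization altogether. It fixes a kernel $q_j=c(\d)\,j^{-\d}$ with $\d\in(1,2)$ and defines a single global flux in which a fraction $q_{n-m}$ of the mass arriving at a point of $C_m$ is forwarded to $C_n$ for every $n>m$, spread over $C_n$ proportionally to the conductances; a random rotation of each circle followed by averaging makes the mass per site of $C_m$ exactly $f_m/(m+1)$, where $f_m$ solves the renewal recursion $f_n=\sum_{m<n}q_{n-m}f_m$ and hence $f_m\sim m^{\d-2}$ by Lemma \ref{patrono}. The deterministic estimate $Z_n(x)\geq c\,(n-m)^{-1-\a}$ of Lemma \ref{bimbo} then reduces the energy to $\sum_{k>0} k^{1+\a}q_k^2\sum_{m\geq 0} f_m^2/(m+1)$, which is finite precisely when $2+\a<2\d<4$, i.e.\ for every $\a<2$. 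If you wish to complete your own route you would need an equally explicit choice of the jump distribution inside each annulus together with analogues of Lemmas \ref{patrono} and \ref{bimbo}; the renewal flux is one concrete realization of the ``tuned mixture of jump scales'' that your sketch calls for.
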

This theorem, together with the comparison techniques
developed in  the next section (see Lemma  \ref{protra} below),
allows to recover by a flux--proof the transience of $(\bbZ^2,
\varphipa)$ for $\a\in (0,2)$. The proofs of Proposition
\ref{treno1} and Theorem \ref{alex} are given in Section
\ref{massimo}.

\section{Recurrence and transience by comparison methods}\la{gen}
Let $S_0$ be a given locally finite subset of $\bbR^d$  and
 let
$(S_0,\varphi_0)$ be the associated random walk. We assume that
$w_{S_0}(x)<\infty$ for all $x\in S_0$ and that $\varphi_0(t)>0$ for
all $t>0$. Recall that in the resistor network picture every node
$\{x,y\}$ is given the resistance $r_0(x,y):=\varphi_0(|x-y|)^{-1}$.
To fix ideas we may think of $S_0=\bbZ^d$ and either
$\varphi_0=\varphipa$ or $\varphi_0=\varphieb$. $(S_0,\varphi_0)$
will play the role of the deterministic background network.

Let $\bbP$ denote a simple point process on $\bbR^d$, i.e.\ a
probability measure on the set $\O$ of locally finite subsets $S$ of
$\bbR^d$, endowed with the $\si$--algebra $\cF$ generated by the
counting maps $N_\L:\O\to\bbN\cup\{0\}$, where $N_\L(S)= S(\L)$ is the number of points
of $S$ that belong to $\L$ and
$\L$ is a bounded Borel subset
of $\bbR^d$. We shall use $S$ to denote a generic random
configuration of points distributed according to $\bbP$. We assume
that $\bbP$ and $\varphi$ are such that (\ref{pw}) holds.

Next, we introduce a map $\phi: S_0\to S$, from our reference set $S_0$ to the
random set $S$.
For any $x\in S_0$ we write $\phi(x)=\phi(S,x)$
for the point in $S$ which is closest to $x$ in Euclidean distance.
If the Euclidean distance from $x$ to $S$ is minimized by more than
one point in $S$ then choose $\phi(x)$ to be the one with lowest
lexicographic order. This defines a measurable map
$\O\ni S\to \phi(S,x)\in\bbR^d$ for every $x\in S_0$.

For any point $u\in S$ define the {\em cell}
$$
V_u:=\{x\in S_0\,:\;\, u=\phi(x)\}\,.
$$
By construction $\{V_u\,,\;u\in S\}$ determines a
partition of the original vertex set $S_0$.
Clearly, some of  the $V_{u}$ may be
empty, while some may be large (if $S$ has large ``holes'' with respect to
$S_0$). Let $N(u)$ denote the number of points (of $S_0$) in the cell
$V_u$. We denote by $\bbE$ the expectation with respect to $\bbP$.

\begin{Le}\la{protra}
Suppose $(S_0,\varphi_0)$ is transient. If there exists $C<\infty$
such that  for all $x\not=y$ in $ S_0$  it holds \be\la{bound}
\bbE\left[N(\phi(x)) N(\phi(y))\, r(\phi(x),\phi(y)) \right]\leq C\,
r_0(x,y)\,,
\end{equation}
then $(S,\varphi)$ is $\bbP$--a.s.\ transient.
\end{Le}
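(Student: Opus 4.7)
The plan is to apply the Royden--Lyons criterion: starting from a finite--energy unit flux $f_0$ on the reference network $(S_0,\varphi_0)$ (which exists because $(S_0,\varphi_0)$ is assumed transient), I would push it forward through the map $\phi$ to obtain a unit flux $f$ on $(S,\varphi)$, and then show that the hypothesis (\ref{bound}) controls the expected energy of $f$.

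Fix $x_0\in S_0$ and let $f_0:S_0\times S_0\to\bbR$ be an antisymmetric unit flux from $x_0$ to $\infty$ with $\cE_0(f_0)<\infty$. For $u,v\in S$ with $u\neq v$ I would define
\begin{equation*}
f(u,v)\;:=\;\sum_{x\in V_u}\sum_{y\in V_v} f_0(x,y).
\end{equation*}
This is antisymmetric. To check the divergence condition, for $u\in S$ I would write
\begin{equation*}
\sum_{v\in S,\,v\neq u} f(u,v)\;=\;\sum_{x\in V_u}\sum_{y\in S_0\setminus V_u} f_0(x,y)\;=\;\sum_{x\in V_u}\mathrm{div}_0 f_0(x),
\end{equation*}
since the pairs $y\in V_u\setminus\{x\}$ cancel by antisymmetry when summed over $x\in V_u$. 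Because $\mathrm{div}_0 f_0=\delta_{x_0}$, the right--hand side equals $1$ when $u=\phi(x_0)$ and $0$ otherwise. So $f$ is a unit flux from the (random) point $\phi(x_0)\in S$ to infinity.

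Next I would estimate $\cE(f)$. By Cauchy--Schwarz,
\begin{equation*}
f(u,v)^2\;\leq\;N(u)N(v)\sum_{x\in V_u}\sum_{y\in V_v} f_0(x,y)^2,
\end{equation*}
so, swapping the summation order and using that each $x\in S_0$ lies in $V_{\phi(x)}$,
\begin{equation*}
\cE(f)\;\leq\;\tfrac12\sumtwo{x\neq y\in S_0}{\phi(x)\neq\phi(y)} N(\phi(x))N(\phi(y))\,r(\phi(x),\phi(y))\,f_0(x,y)^2.
\end{equation*}
Taking expectations and invoking (\ref{bound}) gives
\begin{equation*}
\bbE[\cE(f)]\;\leq\;\tfrac{C}{2}\sum_{x\neq y\in S_0} r_0(x,y)\,f_0(x,y)^2\;=\;C\,\cE_0(f_0)\;<\;\infty.
\end{equation*}
Hence $\cE(f)<\infty$ $\bbP$--a.s., and by the Royden--Lyons criterion $(S,\varphi)$ is $\bbP$--a.s.\ transient (irreducibility of the walk on $S$ gives transience from any starting point, not just the random one $\phi(x_0)$).

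The only delicate point is the Cauchy--Schwarz step and the subsequent swap of sums: one has to be careful that pairs $(x,y)$ with $\phi(x)=\phi(y)$ drop out harmlessly (they contribute to terms with $u=v$, which are excluded from $\cE(f)$), and that antisymmetry of $f_0$ makes the divergence computation collapse to a single sum over $V_u$. Both are mechanical once the definition of $f$ is in hand.
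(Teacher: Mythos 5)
Your proof is correct and follows essentially the same route as the paper: push the finite--energy flux $f_0$ forward through $\phi$ by summing over cells, verify the divergence condition via antisymmetry, and control the energy with Cauchy--Schwarz plus the hypothesis (\ref{bound}) after taking expectations. The only (harmless) difference is that you explicitly discard the pairs with $\phi(x)=\phi(y)$ in the energy bound, a point the paper glosses over.
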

\proof Without loss of generality we shall assume that $0\in
S_0$. Since $(S_0,\varphi_0)$ is transient, from the Royden--Lyons
criterion recalled in Subsection \ref{rrn}, we know that there
exists a unit flux $f:S_0\times S_0\to \bbR$ from $0\in S_0$ to
$\infty$ with finite $(S_0,\varphi_0)$-energy.
By the same criterion, in order to prove the transience of
$(S,\varphi)$  we only need to exhibit a unit flux from some point
of $S$ to $\infty$ with finite $(S,\varphi)$-energy. To this end,
for any $u,v\in S$ we define
$$
\theta(u,v) = \sum_{x\in V_{u}}\sum_{y\in V_{v}} f(x,y)\,.
$$
  If either $V_u$ or $V_v$ are empty we set $\theta(u,v)=0$. Note that the above sum
  is finite for all $u,v\in S$,  $\bbP$--a.s. Indeed condition (\ref{bound}) implies that
  $N(\phi (x) )< \infty$ for all $x\in S_0$, $\bbP$--a.s.
   Thus,
$\theta$  defines a unit flux from $\phi(0)$ to infinity on
$(S,\varphi)$. Indeed, for every $u,v\in S$ we have $\theta  (u,v)=-
\theta  (v,u)$ and for every $u\neq \phi( 0)$ we have $\sum_{v\in
S}\theta(u,v)=0$. Moreover,
$$
\sum_{ v\in S} \theta(\phi(0),v) = \sum_{x\in V_{\phi(0)}}\sum_{y\in
S_0 } f(x,y) = \sum_{\substack{x\in V_{\phi(0)}:\\x\not=0}
}\sum_{y\in S_0} f(x,y) + \sum_{y\in S_0} f(0,y)=0+1=1\,.
$$
The energy of the flux $\theta$ is given by \be\la{enflux}
\cE(\theta):=\frac12\sum_{u\in S}\sum_{v\in S} \theta(u,v)^2 r(u,v)\,.
\end{equation}
From Schwarz' inequality  $$
\theta(u,v)^2\leq N(u)N(v)\sum_{x\in V_u}\sum_{y\in V_v} f(x,y)^2\,.$$
It follows that
\be\la{enflux1}
\cE(\theta)\leq
\frac12\sum_{x\in S_0}\sum_{y\in S_0}
f( x, y)^2
\,N({\phi(x)})\,N({\phi(y)})\,
r(\phi(x),\phi(y))\,.
\end{equation}
Since $f$ has finite energy on $(S_0,\varphi_0)$ we see that
condition (\ref{bound}) implies $\bbE[\cE(\theta)]<\infty$. In
particular, this shows that $\bbP$--a.s.\ there exists a unit flux
$\theta$ from some point $u_0\in S$ to $\infty$ with finite $(S,\varphi)$-energy.  \qed

\medskip

To produce an analogue of Lemma \ref{protra} in the recurrent case
we introduce the set $\wt S=S\cup S_0$ and consider the network
$(\wt S,\varphi)$. From monotonicity of resistor networks,
recurrence of $(S,\varphi)$ is implied by recurrence of $(\wt
S,\varphi)$. We define the map $\phi': \wt S\to S_0$, from $\wt S$
to the reference set $S_0$ as the map $\phi$ introduced before, only
with $S_0$ replaced by $\wt S$ and $S$ replaced by $S_0$. Namely,
given $x \in \wt S$ we define  $\phi ' (x)$ as the point in $S_0$
which is closest to $x$ in the Euclidean distance (when there is
more than one minimizing point, we  take the one with lowest
lexicographic order).  Similarly, for any point $x\in S_0$ we define
$$
V'_x:=\{u\in \wt S\,:\;\, x=\phi'(u)\}\,.
$$
Thus $\{V'_x\,,\;x\in S_0\}$ determines a partition of $\wt S$.
Note that in this case all $V'_x$ are non--empty ($V'_x$ contains
$x\in \wt S$).

As an example,  if $S_0=\bbZ^d$ then $\phi '(x)$, $x \in \wt S$, is the
only point in $\bbZ^d$ such that $x \in \phi'(x)+ (-1/2,1/2]^d$, while
$V'_x = \wt S \cap ( x+(-\frac{1}{2},\frac{1}{2}]^2)$ for any $x \in
\bbZ^d$.


\begin{Le}\la{prorec}
Suppose that  $(S_0,\varphi_0)$ is recurrent and that
$\bbP$--a.s.\ $V_x'$ is finite for all $x \in S_0$.  If there exists
$C<\infty$ such that for all $x\not=y$ in $ S_0$ it holds
\be\la{boundo} \bbE\Bigl[\sum_{u\in V'_{x}}\sum_{v\in V'_{y}}
\,\varphi(|u-v|) \Bigr]\leq C\, \varphi_0 (|x-y|)\,,
\end{equation}
then $(S,\varphi)$ is $\bbP$--a.s.\ recurrent.
\end{Le}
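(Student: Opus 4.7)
The plan is to prove recurrence of the enlarged network $(\wt S,\varphi)$ and deduce recurrence of $(S,\varphi)$ by Rayleigh's monotonicity principle. Since $S\subset\wt S$, one may regard $(S,\varphi)$ as obtained from $(\wt S,\varphi)$ by deleting the vertices in $\wt S\setminus S$, which can only increase effective resistances; equivalently, any finite-energy unit flux on $(S,\varphi)$ from some $x_0$ to infinity extends by zero to a unit flux on $(\wt S,\varphi)$ of the same energy, so transience on $S$ would force transience on $\wt S$. This is exactly the comparison argument already used in Lemma \ref{lemmino}.

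Fix $x_0\in S_0$, so that $\phi'(x_0)=x_0\in \wt S$. Since $(S_0,\varphi_0)$ is recurrent, the variational characterization (\ref{var_con}) provides, for every $n\geq 1$, a function $h_0^{(n)}:S_0\to[0,1]$ with $h_0^{(n)}(x_0)=0$, $h_0^{(n)}\equiv 1$ outside some finite set $F_n\subset S_0$, and Dirichlet sum (on $(S_0,\varphi_0)$) at most $2^{-n}$. Lift these to $\wt S$ via the closest-point map by setting $h^{(n)}(u):=h_0^{(n)}(\phi'(u))$. Then $h^{(n)}(x_0)=0$ and $h^{(n)}\equiv 1$ outside $\bigcup_{x\in F_n}V'_x$, a set which is finite $\bbP$--a.s.\ by the standing hypothesis on the cells $V'_x$. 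Hence $h^{(n)}$ is an admissible trial function for the effective conductance $C_N(x_0)$ in $(\wt S,\varphi)$ as soon as the exhausting subset $\wt S_N$ contains this exceptional set.

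Since $h^{(n)}(u)-h^{(n)}(v)=0$ whenever $u,v$ lie in the same cell $V'_x$, partitioning the double sum in (\ref{var_con}) and then taking expectation yields
\begin{equation*}
\bbE\bigl[\tfrac12\sum_{\substack{u,v\in\wt S\\ u\neq v}}\varphi(|u-v|)(h^{(n)}(u)-h^{(n)}(v))^2\bigr]=\tfrac12\sum_{\substack{x,y\in S_0\\ x\neq y}}\bigl(h_0^{(n)}(x)-h_0^{(n)}(y)\bigr)^2\,\bbE\Bigl[\sum_{u\in V'_x,\,v\in V'_y}\varphi(|u-v|)\Bigr],
\end{equation*}
which by (\ref{boundo}) is at most $C$ times the $(S_0,\varphi_0)$-Dirichlet sum of $h_0^{(n)}$, hence at most $C\,2^{-n}$. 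A Markov plus Borel--Cantelli argument (the probability that the Dirichlet sum of $h^{(n)}$ exceeds $2^{-n/2}$ is at most $C\,2^{-n/2}$, which is summable in $n$) then shows that along a subsequence this Dirichlet sum tends to zero $\bbP$--a.s. Letting first $N\to\infty$ in $C_N(x_0)\leq $ Dirichlet sum of $h^{(n)}$ and then $n\to\infty$ gives $C(x_0)=0$ $\bbP$--a.s., i.e.\ the recurrence of $(\wt S,\varphi)$.

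The only real technical point is the interplay between the almost-sure finiteness of the cells $V'_x$, needed so that the lifted test functions have the correct boundary behaviour on the finite exhausting sets, and the upgrade from the expectation bound to an almost-sure vanishing of the Dirichlet sum along a subsequence; the rest is bookkeeping analogous to that in the proof of Lemma \ref{protra}, with fluxes replaced by Dirichlet-form test functions.
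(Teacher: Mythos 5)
Your proof is correct and follows essentially the same route as the paper's: lift (near-)optimal Dirichlet test functions from $(S_0,\varphi_0)$ to $(\wt S,\varphi)$ via the closest-point map $\phi'$ (using that the lifted function is constant on each cell $V'_x$), bound the expected Dirichlet sum by $C$ times the original one using (\ref{boundo}), and pass to $(S,\varphi)$ by monotonicity. The only, immaterial, difference is the final upgrade from convergence in expectation to almost-sure convergence: the paper uses the monotonicity in $n$ of the finite-volume conductances $c(\wt S_n)$, while you use Markov's inequality and Borel--Cantelli with a geometrically decaying choice of test functions; both are valid.
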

\proof Without loss of generality we shall assume that $0\in
S_0$. Set $S_{0,n}=S_0\cap[-n, n]^d$, collapse all sites in
$S_{0,n}^c=S_0\setminus S_{0,n}$ into a single site $z_n$ and call
$c(S_{0,n})$ the effective conductivity between $0$ and $z_n$, i.e.\
the net current flowing in the network when a unit voltage is
applied across $0$ and $z_n$. Since $(S_0,\varphi_0)$ is recurrent
we know that $c(S_{0,n})\to 0$, $n\to\infty$.

Recall that $c(S_{0,n})$ satisfies
\begin{equation}\label{elefantino}
c(S_{0,n})= \frac12 \sum_{x,y\in S_0} \varphi_0(|x-y|)
(\psi_n(x)-\psi_n(y))^2\,,
\end{equation}
where $\psi_n$ is the electric potential, i.e.\ the unique function on $S_0$
that is harmonic in $S_{0,n}$, takes the value $1$ at $0$ and vanishes out of $S_{0,n}$.

Given $S\in\O$, set
$$\wt S_n = \cup_{x\in S_{0,n}} V'_x\,.$$
Note that $\wt S_n$ is an increasing sequence of finite sets,
covering all $S$. Collapse all sites in $\wt S_{n}^c$ into a single
site $\wt z_n$ and call $c(\wt S_{n})$ the effective conductivity
between $0$ and $\wt z_n$ (by construction $0\in \wt S_n$). From the
Dirichlet principle \eqref{var_con}  we have
$$
c(\wt S_{n}) \leq  \frac12 \sum_{u,v\in \wt S} \varphi(|u-v|) (g(u)-g(v))^2\,,
$$
for any $g:\wt S\to [0,1]$ such that $g(0)=1$ and $g=0$ on $\wt S_{n}^c$.
Choosing $g(u)=\psi_n(\phi'(u))$ we obtain
$$
c(\wt S_{n}) \leq  \frac12 \sum_{x,y\in S_0}
(\psi_n(x)-\psi_n(y))^2\,\sum_{u\in V'_x}\sum_{v\in V'_y}
\,\varphi(|u-v|)\,.
$$
From the assumption (\ref{boundo}) and the recurrence of
$(S_0,\varphi_0)$ implying that (\ref{elefantino}) goes to zero, we
deduce that
$\bbE [c(\wt S_{n})] \to  0$, $n\to\infty$. Since $c(\wt S_{n})$ is 
monotone decreasing we deduce that $c(\wt S_{n})\to 0$, $\bbP$--a.s.
This implies $\bbP$--a.s. recurrence of $(\wt S,\varphi)$ and the claim follows.
\qed


\smallskip

\subsection{Proof of Theorem \ref{th1}}\la{prth1}
We first prove part (i) of the
theorem, by applying the general statement derived in Lemma
\ref{protra} 
in the case $S_0=\bbZ^d$ and $\varphi_0=\varphipa$.
Since  $(S_0, \varphipa)$ is transient whenever $d\geq 3$, or
$d=1,2$ and $0<\a<d$, we only need to verify condition
(\ref{bound}). For the moment we only suppose that $\psi (t) \leq C'
t ^{-\g}$ for some $\g>0$.

Let us fix $p,q>1$ s.t. $1/p+1/q=1$.
Using H\"{o}lder's
inequality and then
Schwarz' inequality, we obtain
\begin{align}\label{HS}
&\bbE\left[N(\phi(x)) N(\phi(y))\, r(\phi(x),\phi(y)) \right]\\
&\qquad\qquad\qquad\leq
\bbE \left[ N (\phi(x))^{2q} \right]^{\frac{1}{2q} } \bbE \left[ N (\phi(y))^{2q} \right]^{\frac{1}{2q} }\bbE\left[ r
(\phi(x),\phi(y) )^p\right]^{\frac{1}{p}}
\nonumber\end{align}
for any $x\not=y$ in $\bbZ^d$.  By assumption (\ref{trans1}) we know
that
\begin{equation}\label{samarcanda}
r(\phi(x),\phi(y) ) ^p \leq
c \, r_{p,\a} (\phi(x), \phi (y))^p :=
c
\, \bigl(1\lor |\phi(x) -\phi(y) |^{p(d+\a)}\bigr)\,.
\end{equation}
We shall use $c_1,c_2,\dots$ to denote constants independent from $x$ and $y$ below.
From the triangle inequality 
$$
|\phi(x) -\phi(y) |^{p(d+\a)} \leq c_1 \left( |\phi(x) -x
|^{p(d+\a)}+|x -y |^{p(d+\a)}+|\phi(y) -y |^{p(d+\a)}\right)\,.
$$
From (\ref{samarcanda}) and the fact that $|x-y|\geq 1$ we derive
that
\begin{equation}\label{fiorello}
\bbE\left[r(\phi(x),\phi(y) ) ^p  \right]
\leq c_2 \sup_{z \in \bbZ^d}
\bbE\left[|\phi(z) -z |^{p(d+\a)}\right] + c_2 |x-y|^{p(d+\a)}\,.
\end{equation}
Now we observe that $|\phi(z)-z|\geq t$ if and only if $B_{z,t}\cap
S=\emptyset$. Hence we can estimate
$$
\bbE\left[|\phi(z) -z |^{p(d+\a)}\right] \leq 1 + \int _1 ^\infty
\psi\left(t^\frac1{p(d+\a)}\right) d t
\leq 1 + C\,\int_1^\infty t^{-\frac{\g}{p(d+\a)}} dt\leq c_3\,,
$$ whenever $ \g>p(d+\a)$ as we assume.
Therefore, using $|x-y|\geq 1$,
from (\ref{fiorello}) we see that for any
$x\not=y$ in $\bbZ^d$:
\begin{equation}\label{rosario}
\bbE\left[r(\phi(x),\phi(y) ) ^p  \right]^\frac1p\leq c_4\, r_{p,\alpha} (x,y)\,.
\end{equation}

\medskip

Next, we estimate the expectation $\bbE\left[
N(\phi(x))^{2q}\right]$ from above, uniformly in $x \in \bbZ^d$. To this end we shall need the following simple geometric lemma.
\begin{Le}\la{bxts}
Let $E(x,t)$ be the event that $S\cap B(x,t) \not= \emptyset $ and
$S\cap B(x\pm 3 \sqrt{d} \,t e_i,t  )\not = \emptyset $, where
$\{e_i:1\leq i \leq d\}$ is the canonical basis of $\bbR^d$. 
%
Then, on the event $E(x,t)$ we have $\phi(x) \in B(x,t)$, i.e.
$|\phi(x) -x|< t$, and $z \not \in V_{\phi(x)}$ for all $z \in
\bbR^d$ such that $|z-x|>  9 d\sqrt{d} \,t $
\end{Le}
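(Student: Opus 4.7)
The plan is a direct geometric argument, split along the two claims of the lemma. For the first claim, the hypothesis $S\cap B(x,t)\neq\emptyset$ immediately produces a point of $S$ within Euclidean distance $t$ of $x$; since $\phi(x)$ is by definition the point of $S$ closest to $x$, we get $|\phi(x)-x|<t$ with no further work.

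For the second claim, fix $z\in\bbR^d$ with $|z-x|>9d\sqrt{d}\,t$. My strategy is to exhibit a point $u\in S$ strictly closer to $z$ than $\phi(x)$ is, which forces $\phi(z)\neq\phi(x)$ and hence $z\notin V_{\phi(x)}$. To construct $u$, expand $z-x=\sum_{i}a_i e_i$ and choose an index $i$ maximizing $|a_i|$; the elementary norm comparison gives $|a_i|\geq |z-x|/\sqrt{d}>9d\,t$. Setting $\e=\mathrm{sgn}(a_i)$ and $y=x+3\sqrt{d}\,t\,\e\, e_i$, the second part of $E(x,t)$ supplies $u\in S\cap B(y,t)$, so $|u-z|\leq t+|z-y|$.

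The key estimate is a one-line expansion: since the $e_i$-component of $z-x$ has the same sign as $\e$,
\begin{equation*}
|z-y|^2=|z-x|^2-6\sqrt{d}\,t\,|a_i|+9d\,t^2\leq |z-x|^2-6t\,|z-x|+9d\,t^2.
\end{equation*}
A short verification shows that $|z-x|>9d\sqrt{d}\,t$ is more than enough to imply $|z-y|<|z-x|-2t$, whence
\begin{equation*}
|u-z|\leq t+|z-y|<|z-x|-t<|z-\phi(x)|,
\end{equation*}
where the last inequality uses the first claim. This contradicts $\phi(z)=\phi(x)$ and proves the second assertion.

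There is no substantive obstacle here. The only numerical check one has to carry out is that, upon squaring the target inequality $|z-y|<|z-x|-2t$, the threshold it produces, namely $|z-x|>(9d-4)t/2$, is dominated by the hypothesis $|z-x|>9d\sqrt{d}\,t$; this is immediate for all $d\geq 1$. The constant $9d\sqrt{d}$ is not sharp, but it is the natural choice that meshes with the factor $3\sqrt{d}$ appearing in the centers of the $2d$ auxiliary balls in the definition of $E(x,t)$.
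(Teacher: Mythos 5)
Your proof is correct and follows essentially the same route as the paper's: both select the coordinate direction in which $z-x$ has its largest component, use the auxiliary ball $B(x\pm 3\sqrt{d}\,t\,e_i,t)$ in that direction to produce a point of $S$ strictly closer to $z$ than $\phi(x)$ is, and reduce everything to the gap inequality $|z-y|<|z-x|-2t$. The only difference is cosmetic: you verify that inequality by squaring and expanding directly, while the paper establishes the same $2t$ gap via a monotonicity observation and the mean value theorem applied to the square root.
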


Assuming for a moment the validity of Lemma \ref{bxts} the proof
continues as follows. From Lemma \ref{bxts} we see that, for a
suitable constant $c_5$, the event $N(\phi(x) ) >  c_5 t^d$ implies
that at least one of the  $2d+1$ balls $B(x,t)$, $B(x\pm 3
\sqrt{d}  t e_i ,t)$ must have empty intersection with $S$. Since
 $B(x\pm \lfloor 3
\sqrt{d}  t\rfloor e_i ,t-1)\subset
 B(x\pm 3
\sqrt{d}  t e_i ,t)$ for $t\geq 1$, we conclude that
$$  \bbP\left[ N(\phi(x))  > c_5 t^d\right] \leq (2d+1)
\psi(t-1)\,, \qquad t \geq  1 . 
$$
Taking $c_6$ such that $c_5^{-\frac1d}c_6^{\frac{1}{2qd}}=2$,  it
follows that
\begin{multline}\label{sera}
\bbE\left [ N(\phi(x))^{2q} \right]= \int_0^\infty
\bbP( N(\Phi(x))^{2q} > t)dt
\\
\leq c_6 +(2d+1)\int _{c_6}^\infty
\psi\left(c_5^{-\frac1d}t^{\frac{1}{2qd}}-1 \right)dt \leq c_6+
c_7\int_1^\infty t^{-\frac{\g}{2qd}}dt\leq c_8\,,
\end{multline}
as soon as $ \g>2qd$.

\medskip

Due to (\ref{HS}), (\ref{rosario}) and (\ref{sera}), the hypothesis
(\ref{bound}) of Lemma \ref{protra} is satisfied if $\psi(t)\leq
C\,t^{-\g} $ for all $t\geq 1$, where
 $\g$ is a constant satisfying
$$ \g>p(d+\a)\,, \qquad \g>2qd=\frac{2pd}{p-1}\,.$$
We observe that the functions $(1,\infty)\ni p \rightarrow p(d+\a)$
and $(1,\infty)\ni p \rightarrow \frac{2pd}{p-1}$ are respectively
increasing and decreasing and intersect in only one point
$p_*=\frac{3d+\a}{d+\a}$.
Hence, optimizing over $p$,
it is enough to require that
$$ \gamma >
\inf_{p>1} \max\{ p(d+\a), \frac{2pd}{p-1} \}= p_*(d+\a)=3d+\a\,.
$$
This concludes the proof of Theorem \ref{th1} (i).

\medskip

\noindent
{\em Proof of lemma \ref{bxts}.}
The first claim is trivial since $S\cap B(x,t)\neq\emptyset$ implies $\phi(x)\in B(x,t)$.
In order to prove the second one
we proceed as follows. For simplicity of notation we set $m:=3\sqrt{d}$ and $k:=9d$. Let us take $z \in \bbR^d$ with
$|z-x|>k\sqrt{d}\,t $. Without loss of generality, we can suppose
that $x=0$, $z_1 >0$ and $z_1 \geq |z_i|$ for all $i =2,3,\dots, d$.
Note that this implies that $k\sqrt{d} \,t < |z|\leq \sqrt{d} z_1$,
hence $z_1> k t$. Since
\begin{align*}
& \min  _{u \in B(0,t) } |z-u| =|z|-t\\
& \max _{u \in  B( m t e_1,t) } |z-u|\leq|z-mt e_1|+t\,,
\end{align*}
if we prove that
\begin{equation}\label{chepizza}
|z|-|z-mt e_1|>  2t\,,
\end{equation}
we are sure that the distance from $z$ of  each point in $S\cap
B(0,t)$ is larger than the distance   from $z$  of each point of
$S\cap B(mt e_1, t)$. Hence it cannot be that $z \in V_{\phi(0)}$.
In order to prove (\ref{chepizza}), we first observe that the map
$(0,\infty) \ni x\rightarrow\sqrt{x+a}-\sqrt{x+b}\in (0,\infty) $ is
decreasing for $a>b$. Hence we obtain that
$$|z|-|z-mt e_1|\geq \sqrt{d}z_1- \sqrt{ (z_1-mt)^2+(d-1)z_1^2}\,.
$$
Therefore, setting $x:=z_1/t$, we only need to prove that
$$
\sqrt{d}x- \sqrt{ (x-m)^2+(d-1)x^2}>2\,, \qquad \forall x>k\,.
$$
By the mean value theorem applied to the function $f(x)=\sqrt{x}$,
it must be
\begin{multline*}
\sqrt{d}x- \sqrt{ (x-m)^2+(d-1)x^2}\geq \frac{1}{2\sqrt{d} x} \left(
dx^2- (x-m)^2-(d-1)x^2 \right)=\\
\frac{ 2xm-m^2}{ 2\sqrt{d} x} > \frac{m}{\sqrt{d}}-
\frac{m^2}{k}=2\,.
\end{multline*}
This completes the proof of (\ref{chepizza}).
\qed

\bigskip

\noindent {\sl Proof of Theorem \ref{th1} Part (ii)}.
We use the same approach as in Part (i) above.
We start again our estimate from (\ref{HS}).
Moreover, as in the proof of (\ref{sera}) it is clear that hypothesis
(\ref{trans20})
implies $\bbE[N(\phi(x))^{2q}]<\infty$ for any $q>1$, uniformly in $x\in\bbZ^d$. Therefore it remains to check that
\be\la{tc1}
r_0(x,y):=\bbE\left[r(\phi(x),\phi(y))^p\right]^\frac1p \,,
\end{equation}
defines a transient resistor network on $\bbZ^d$, for any $d\geq 3$, under the assumption that
$$r(\phi(x),\phi(y))\leq C\,\nep{\d|\phi(x)-\phi(y)|^\b}\,.$$
For any $\b>0$ we
can find a constant $c_1=c_1(\b)$
such that
$$  r(\phi(x),\phi(y))\leq C\,
\exp{\left(\d\,c_1\left[|\phi(x)-x|^\b + |x-y|^\b +
|\phi(y)-y|^\b\right]\right)}\,.
$$
Therefore, using Schwarz' inequality
we have
\begin{align*}
&\bbE\left[r(\phi(x),\phi(y))^p\right]^\frac1p
\\
&\qquad\;\leq c_2\exp{\left(\d\,c_2 |x-y|^\b\right)}\,
\bbE\left[\exp{\left(\d\,c_2 |\phi(x)-x|^\b
\right)}\right]^\frac12
\bbE\left[\exp{\left(\d\,c_2 |\phi(y)-y|^\b
\right)}\right]^\frac12\,.
\end{align*}
For $\g>0$
\begin{align*}
\bbE\left[\exp{\left(\g|\phi(x)-x|^\b \right)}\right] &\leq 1 +
\int_1^\infty \psi\left(\left(\frac1\g\,\log
t\right)^\frac1\b\right)\,dt \\ & = 1 + \g \beta \,\int_0^\infty
\psi(s)\,\nep{\g s^\b}\,s^{\b-1}\,ds\,,
\end{align*}
where, using (\ref{trans20}),
the last integral is finite for $\g<a$.
Taking $\g=\d\,c_2$ and $\d$ sufficiently small we arrive at the conclusion that uniformly in $x,$
$$
\bbE\left[r(\phi(x),\phi(y))^p\right]^\frac1p
\leq c_3\exp{\left(c_3 |x-y|^\b\right)}=:\wt r_0(x,y)\,.
$$
Clearly, $\wt r_0(x,y)$
defines a transient resistor network on
$\bbZ^d$ and the same claim for $r_0(x,y)$
follows from monotonicity.
This ends the proof of Part (ii) of Theorem \ref{th1}. \qed

\bigskip

\noindent \noindent {\sl Proof of Theorem \ref{th1} Part (iii)}.
Here we use the criterion given in Lemma \ref{prorec} with
$S_0=\bbZ^d$. With this choice of $S_0$ we have that
$V_x'\subset \{x\}\cup (S\cap Q_{x,1})$, $x \in \bbZ^d$. Recalling definition (\ref{fi0}) we see that
for all $x\not = y $ in $\bbZ^d$,
$$
\bbE\left[ \sum_{u\in V_x'}
\sum_{v\in V'_y}\varphi(|u-v|)\right]
\leq c_1\,\varphi_0(x,y)\,\bbE\left [ (1+S(
Q_{x,1})) (1+S(Q_{y,1}) )\right]\,.
$$
Using the  Schwarz' inequality and condition (\ref{rick2}) the
last expression is bounded by $c_2\, \varphi _0 (x,y) $. This
implies condition (\ref{boundo}) and therefore the a.s.\ recurrence
of $(S, \varphi)$. \qed

\subsection{Proof of Corollary \ref{cor1}}\la{prcor1}
We start with some estimates on the function $\psi(t)$. Observe that
for Poisson point processes PPP($\l$) one has $\psi(t) = \nep{-\l \,t^d}$.
A similar estimate holds for a stationary DPP. More generally, for
DPP we shall use the following facts.
\begin{Le}\label{domenica}
Let $\bbP$ be a determinantal point process on $\bbR^d$ with kernel
$K$. Then the function $\psi(t)$ defined in (\ref{psipsi}) equals
\begin{equation}\label{psipsi2}
\psi(t)=\sup _{x \in \bbZ^d} \prod _i \bigl( 1 - \l_i (B(x,t)) \bigr)\,,
\end{equation}
where $\l_i(B)$ denote the eigenvalues of $\cK 1_{B}$ for any bounded Borel set $B\subset\bbR^d$.
In particular, condition (\ref{trans2}) is satisfied if
\begin{equation}\label{trans3}
\exp \Big\{ -  \int_{ B(x,t)} K(u,u) du \Big\}\leq C
t^{-\g}\,, \qquad t >0\,, \; x \in \bbZ^d\,,
\end{equation}
for some constants $C>0$ and $\g>3d+\a$.
If $\bbP$ is a stationary DPP then $\psi(t)\leq \nep{-\d\,t^d}$, $t>0$, for some $\d>0$.

Finally, condition (\ref{rick2}) reads
\begin{equation}
\label{rick3}
\sup_{x \in \bbZ^d} \Bigl\{\sum_i \l_i(Q(x,1))  +
\sum_i\sum_{j:\;j\neq i}\l_i (Q(x,1))\l_j (Q(x,1))
\Bigr\}< \infty \,.
\end{equation}
In particular, condition (\ref{rick2}) always holds if $\bbP$ is a stationary DPP.
\end{Le}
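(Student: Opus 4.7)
The plan is to reduce every assertion to the standard Bernoulli representation of a DPP: for any bounded Borel set $B\subset\bbR^d$, the random variable $S(B)$ has the same law as $\sum_i X_i$, where the $X_i$ are independent Bernoulli random variables with parameters equal to the eigenvalues $\lambda_i(B)$ of the compact self--adjoint operator $\cK 1_B$ on $L^2(\bbR^d,dx)$; this is the content of Soshnikov's theorem \cite{So}. Granting this fact, each of the five claims reduces to a short computation.

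For (\ref{psipsi2}), set $B=B_{x,t}$ so that $\bbP(S(B_{x,t})=0)=\prod_i\bbP(X_i=0)=\prod_i(1-\lambda_i(B_{x,t}))$, and take the supremum over $x\in\bbZ^d$. For the implication (\ref{trans3})$\Rightarrow$(\ref{trans2}), I combine the elementary bound $1-\lambda\leq e^{-\lambda}$ valid on $[0,1]$ with the identity $\sum_i\lambda_i(B_{x,t})=\tr(\cK 1_{B_{x,t}})=\int_{B_{x,t}}K(u,u)\,du$ supplied by (\ref{traccia}) to obtain
\begin{equation*}
\psi(t)\leq \sup_{x\in\bbZ^d}\exp\Bigl(-\int_{B_{x,t}}K(u,u)\,du\Bigr),
\end{equation*}
which is dominated by $Ct^{-\gamma}$ exactly under (\ref{trans3}). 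In the stationary case, translation invariance forces $K(u,u)\equiv K(0,0)=:\rho$, whence $\int_{B_{x,t}}K(u,u)\,du=c_d\rho\, t^d$ and the previous display produces $\psi(t)\leq\exp(-c_d\rho\, t^d)$, which proves the stretched--exponential claim with $\delta=c_d\rho$.

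Finally, applying the Bernoulli representation to $B=Q_{x,1}$ and using $X_i^2=X_i$ together with independence of the $X_i$ gives
\begin{equation*}
\bbE[S(Q_{x,1})^2]=\sum_i\lambda_i(Q_{x,1})+\sum_{i\neq j}\lambda_i(Q_{x,1})\lambda_j(Q_{x,1}),
\end{equation*}
which is exactly the content of (\ref{rick3}). For a stationary DPP, $\sum_i\lambda_i(Q_{x,1})=\tr(\cK 1_{Q_{x,1}})=K(0,0)$ is finite and independent of $x$; since each $\lambda_i\in[0,1]$ one has $\sum_{i\neq j}\lambda_i\lambda_j\leq (\sum_i\lambda_i)^2=K(0,0)^2$, so both sums in (\ref{rick3}) are uniformly bounded and the condition holds automatically. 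There is no genuine obstacle in the proof; the only technical point worth verifying is that $\cK 1_B$ is trace class on each bounded Borel $B$, so that the eigenvalues $\lambda_i(B)$ exist and their sum equals $\int_B K(u,u)\,du$, which is guaranteed by the standing hypothesis that $\cK$ is locally trace class together with (\ref{traccia}).
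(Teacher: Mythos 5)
Your proof is correct and follows essentially the same route as the paper: the Bernoulli representation of $S(B)$ with parameters $\l_i(B)$, the bound $1-\l\leq e^{-\l}$ combined with the trace identity (\ref{traccia}), translation invariance of $K(u,u)$ in the stationary case, and the direct second--moment computation for (\ref{rick3}). No gaps.
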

\proof
It is known (see
\cite{Quattro} and \cite{So}) that for each bounded Borel set
$B\subset \bbR^d$ the number of points $S(B)$ has the same law of
the sum $\sum_i B_i$, $B_i$'s being independent Bernoulli random
variables with parameters $\l_i (B)$.
This implies  identity (\ref{psipsi2}).
Since $1-x \leq e^{-x}$, $x\geq 0$, we can bound the r.h.s.\ of
(\ref{psipsi}) by $ \prod _i e^{- \l_i (B) }= e^{- Tr (\cK 1_B)}$.
This identity and (\ref{traccia}) imply  (\ref{trans3}). If the DPP is stationary then
$K(u,u)\equiv K(0)>0$ and therefore $\psi(t)\leq\nep{-K(0)\,t^d}$.
Finally,
(\ref{rick3}) follows from the identity
$\bbE[S(Q(x,1))^2] = \sum_i \l_i(Q(x,1))  + \sum_i\sum_{j:\;j\neq i}\l_i (Q(x,1))\l_j (Q(x,1))$,
again a consequence of the fact that $S(Q(x,1))$ is the sum of independent Bernoulli random variables with parameters $\l_i(Q(x,1))$. Since
the sum of the $\l_i(Q(x,1))$'s is finite,
$\bbE[S(Q(x,1))^2]<\infty$ for any $x\in\bbZ^d$.
If the DPP is stationary it is uniformly finite. \qed

\medskip
The next lemma allows to estimate $\psi(t)$ in the case of percolation clusters.
\begin{Le}\la{clusterlemma}
Let $\bbP$ be the law of the infinite cluster in super--critical Bernoulli site (or bond)
percolation in $\bbZ^d$, $d\geq 2$. Then there exist constants $k,\d>0$ such that
$$\nep{-\d^{-1}\,n^{d-1}}\leq \psi(n)\leq k\,\nep{-\d\,n^{d-1}}\,,\quad n\in\bbN\,.$$
\end{Le}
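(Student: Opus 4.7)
The two bounds are of very different nature: the lower bound is elementary while the upper bound is a classical but non-trivial consequence of the super-critical renormalization theory.

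For the lower bound, I would fix any $x\in\bbZ^d$ and consider the spherical shell
$\L_n:=\{y\in\bbZ^d : n\leq |y-x|< n+1\}$, which has cardinality at most $c_d\,n^{d-1}$. On the event that every site of $\L_n$ is closed (has Bernoulli value $0$), any nearest-neighbor path joining $B_{x,n}\cap\bbZ^d$ to $\infty$ must pass through a site of $\L_n$ and is therefore blocked; hence no vertex of $B_{x,n}$ can belong to the (unique) infinite cluster. Since $\bbP$ is the conditional law given the a.s.\ event $\{C_\infty \neq \emptyset\}$, this gives
\[
 \psi(n) \;\geq\; (1-p)^{|\L_n|} \;\geq\; \exp\bigl(-\d^{-1} n^{d-1}\bigr),
\]
for a suitable $\d>0$.

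For the upper bound, I would start from the following standard geometric observation. On the event $\{C_\infty\cap B_{x,n}=\emptyset\}$, let $U$ be the union of $B_{x,n}\cap\bbZ^d$ with every open cluster that meets it; each such cluster is finite, so $U$ is a finite connected set containing $B_{x,n}\cap\bbZ^d$. Its outer vertex boundary $\partial U$ consists entirely of closed vertices, is $*$-connected by a well-known lemma (see e.g.\ Grimmett's book on percolation), and separates $B_{x,n}$ from $\infty$. By a standard isoperimetric inequality in $\bbZ^d$ one has $|\partial U|\geq c\, n^{d-1}$ for some $c=c(d)>0$.

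The bound then follows by a Peierls-type estimate: the number of $*$-connected closed surfaces of size $k$ that surround $B_{x,n}$ is at most $C(d)^k$ uniformly in $n$, so
\[
 \bbP\bigl(C_\infty\cap B_{x,n}=\emptyset\bigr) \;\leq\; \sum_{k\geq c\,n^{d-1}} \bigl(C(d)(1-p)\bigr)^k.
\]
For $p$ sufficiently close to $1$ one has $C(d)(1-p)<1$ and the bound is immediate. This is the main obstacle: for general $p>p_c$ the naive Peierls argument fails and one must first perform a coarse-graining. Concretely, by the Grimmett--Marstrand/Pisztora renormalization theorem, for every $p>p_c$ and every $q<1$ there is a scale $L=L(p,q)$ such that the field of "good blocks" at scale $L$ stochastically dominates Bernoulli site percolation on $\bbZ^d$ with parameter $q$; moreover, a vertex in a good block is automatically connected to $C_\infty$. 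Applying the Peierls argument on the coarse-grained lattice with $q$ so close to $1$ that $C(d)(1-q)<1$ yields the desired exponential bound in $n^{d-1}$, completing the proof. The constants $k$ and $\d$ depend on $p$ and $d$ through $L$ and the quality of the stochastic domination.
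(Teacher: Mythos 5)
Your lower bound is correct and is essentially the paper's own argument: the paper closes all boundary sites of a cube of side comparable to $n$, you close a spherical shell of width $1$ at radius $n$; either way one pays $(1-p)^{O(n^{d-1})}$ for a blocking set, and the two are interchangeable.

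The upper bound, however, has a genuine gap at its key geometric step. The claim that the outer vertex boundary of $U=B_{x,n}\cup\{\text{open clusters meeting }B_{x,n}\}$ consists entirely of closed vertices is false: an open vertex $v\notin B_{x,n}$ that is adjacent to a \emph{closed} vertex of $B_{x,n}$, and whose own cluster does not return to $B_{x,n}$, lies in $\partial U$ and is open. In the extreme configuration where every site of $B_{x,n}$ is closed and every site outside is open, one has $U=B_{x,n}\cap\bbZ^d$ and $\partial U$ is entirely open (indeed contained in $\cC_\infty$), yet $\{\cC_\infty\cap B_{x,n}=\emptyset\}$ occurs. Consequently the event is \emph{not} contained in the union over $k\geq c\,n^{d-1}$ of the events ``there exists a $*$-connected closed surface of size $k$ surrounding $B_{x,n}$'', and the Peierls sum does not bound its probability --- not even for $p$ close to $1$. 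The coarse-graining step inherits the same flaw: the assertion that a vertex in a good block is automatically connected to $\cC_\infty$ is incorrect (one needs the good block to lie in an \emph{infinite} cluster of good blocks, and even then only its crossing cluster belongs to $\cC_\infty$), and when one tries to extract a surrounding surface of bad blocks the same problem reappears at the boundary of the core region, where blocks adjacent only to bad blocks of the core may be good. The paper avoids this difficulty by a different route: by Pisztora's theorem (Lemma (11.22) of \cite{G} in $d=2$, Theorem 1.2 of \cite{Pisztora} in $d\geq 3$), except on an event of probability at most $k_1e^{-\delta_1 n^{d-1}}$ the box $B(n)$ contains an open cluster of size at least $\gamma n^d$; on $\{\cC_\infty\cap B(n)=\emptyset\}$ that cluster is necessarily finite, and the Kesten--Zhang estimate (Theorem (8.65) of \cite{G}) gives $\bbP(\gamma n^d\leq |C_x|<\infty)\leq k_2e^{-\delta_2 n^{d-1}}$; a union bound over $x\in B(n)$ then concludes. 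You would need to replace your contour construction by an argument of this type (or by a renormalization in which the set whose boundary is taken consists only of good blocks) for the upper bound to stand.
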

\proof The lower bound follows easily by considering the event that
e.g.\ the cube centered at the origin with side $n/2$ has all the
boundary sites (or bonds) unoccupied. To prove the upper bound one
can proceed as follows. Let $K_n(\g)$, $\g>0$, denote the event that
there exists an open cluster $C$ inside the  box
$B(n)=[-n,n]^d\cap\bbZ^d$ such that $|C|\geq \g\,n^d$. Known
estimates (see e.g.\ Lemma (11.22) in Grimmett's book \cite{G} for
the case $d=2$ and Theorem 1.2 of Pisztora's \cite{Pisztora} for
$d\geq 3$) imply that there exist constants $k_1,\d_1,\g>0$ such
that \be\la{pisz} \bbP(K_n(\g)^c)\leq k_1\,\nep{-\d_1\,n^{d-1}}\,.
\end{equation}
On the other hand, let $C_x$ denote the open cluster at $x\in\bbZ^d$ and write $\cC_\infty$ for the infinite open cluster.
From \cite[Theorem (8.65)]{G} we have that
there exist constants $k_2,\d_2$ such that for any $x\in\bbZ^d$ and for any $\g>0$:
 \be\la{subexp}
\bbP(\g\,n^d\leq |C_x| < \infty )\leq k_2\,\nep{-\d_2\,n^{d-1}}\,.
\end{equation}
Now we can combine (\ref{pisz}) and (\ref{subexp}) to prove the desired estimate.
For any $n$ we write
$$
\bbP(B(n)\cap \cC_\infty=\emptyset) \leq \bbP(B(n)\cap \cC_\infty=\emptyset\,;\;K_n(\g))
+ \bbP(K_n(\g)^c)\,.
$$
The last term in this expression is bounded using (\ref{pisz}). The first term is bounded by
$$
\bbP(\exists x\in B(n)\,:\;\g\,n^d\leq |C_x| < \infty )\leq \sum_{x\in B(n)}
\bbP(\g\,n^d\leq |C_x| < \infty )\,.
$$
Using (\ref{subexp}) we arrive at $\bbP(B(n)\cap \cC_\infty=\emptyset) \leq k\,\nep{-\d\,n^{d-1}}$
for suitable constants $k,\d>0$. \qed

\medskip

We are now ready to finish the proof of Corollary \ref{cor1}.
It is clear from the previous lemmas that in all cases we have both conditions (\ref{trans2}) and
(\ref{rick2}). Moreover it is easily verified that $(\bbZ^d,\varphi_0)$ and $(\bbZ^d,\varphipa)$
have the same type, when $\varphi_0$ is defined by (\ref{fi0}) with $\varphi=\varphipa$.
This ends the proof of Corollary \ref{cor1}.

\subsection{Proof of Corollary \ref{cor100}}\la{prcor100}
It is easily verified that $(\bbZ^d,\varphi_0)$ and $(\bbZ^d,\varphieb)$
have the same type, when $\varphi_0$ is defined by (\ref{fi0}) with $\varphi=\varphieb$.
Therefore the statement about recurrency follows immediately from Theorem \ref{th1}, Part (iii)
and the fact that in all cases (\ref{rick2}) is satisfied (see the previous Subsection).

To prove the second statement we recall that our domination assumption
and Strassen's theorem imply that on a suitable probability space $(\O, \cP)$ one
can define the random field $(\s_1,\s_2)\in \{0,1\}^{\bbZ^d}\times
\{0,1\}^{\bbZ^d}$ such that $\s_1$  has the same law of the infinite
cluster in a  super--critical Bernoulli site--percolation on $\bbZ^d$,
$\s_2$ has the same law of the random field $\s$ defined in (\ref{sil}) and $\s_1
\leq \s_2$, $\cP$--a.s., i.e. $\s_1 (x) \leq \s_2 (x)$ for all $x\in
\bbZ^d$, $\cP$--a.s. To each $\s_1$ we associate the
nearest--neighbor resistor network $\cN_1$ with nodes $\{x \in
\bbZ^d\,:\, \s_1(x)=1\}$ such that nearest--neighbor nodes are
connected by a conductance of value $ c_0=c_0(L)>0$
to be determined below. From the result of \cite{GKZ} we know that $\cN_1$ is transient a.s.

Now,
for each cube  $Q_{Lx,L}$ intersecting $S$ we fix a point $\bar x\in
Q_{Lx,L}\cap S$ (say the one with least lexicographic order).
If we keep all points $\bar x$'s belonging to the infinite cluster in $\s_2$
and neglect all other points of $S$ we obtain a subnetwork $(\wt S,\varphi)$ of $(S,\varphi)$.
If $c_0$ is sufficiently small we have
$\varphi (y ,z) \geq c_0$
for all $y,z\in S$ such that  $y \in Q_{L x_1,L}$, $z\in Q_{L
x_2,L}$, $x_1, x_2 \in \bbZ^d$,  $|x_1-x_2|=1$. Reasoning as in the proof of
Lemma \ref{lemmino} then  immediately implies the a.s.\ transience of $(S,\varphi)$.
Note that this actually works for any $\varphi\in\Phi_d$.

To prove the third statement we observe  that for stationary DPP
$\psi(t)\leq \nep{-\d\,t^d}$, see Lemma \ref{domenica}. Therefore the claim follows from
Theorem \ref{th1}, Part (ii).

\section{Lower bounds on the effective resistance}\label{coraggio}
 Assume that $\bbP$ is dominated by an i.i.d.\ field $\G$ as stated
before Theorem \ref{fegato} and suppose the domination property
holds with some fixed $L\in\bbN$. We shall write $Q_v$ for the cube
$Q_{vL,L}$. To prove Theorem \ref{fegato} and Theorem \ref{combine}
we only need to show that, given $v_0 \in \bbZ^d$, for $\bbP$--a.a.\
$S$ there exists a positive constant $c$ such that for all $x\in S
\cap Q_{v_0 L ,L}$ the lower bounds \eqref{rn1} and \eqref{rn0} on
$R_n(x)$ hold. We restrict to  $v_0=0$, since the general case can
be treated similarly.

 We start by making a first reduction of the network which uses the
stochastic domination assumption. This procedure works in any
dimension $d$.  First, we note that it is sufficient to prove the
bounds in the two theorems for the quantity  $\hat  R_n(x)$, defined
as the effective resistance from $x$ to $Q_{0,2Ln}^c$, instead of
$R_n(x)$ which is the effective resistance from $x$ to $Q_{0,2n}^c$.
In particular, there is no loss of generality in taking $L=1$, in
which case $\hat R_n(x)=R_n(x)$.

The next observation is that, by monotonicity, $R_n(x)$ is
larger than the same quantity computed in the network obtained by
collapsing in a single node $v$ all points in each cube $Q_v$,
$v\in\bbZ^d$.  We now have a network with nodes on the points of
$\bbZ^d$ (although some of them may be empty). Note that across two
nodes $u,v$ we have $N_u N_v$ wires each with a resistance bounded
from below by
$$
\r_{u,v}:=c\,|u-v|^{d+\a}\,
$$ for a suitable (non--random) constant $c>0$.
Moreover, using the stochastic domination assumption we know that
$N_u\leq \G_u$ for all $u\in\bbZ^d$, and we can further lower the
resistance by considering the network where each pair of nodes $u,v$
is connected by $\G_u\G_v$ wires each with the resistance
$\r_{u,v}$. Moreover, we can further lower the resistance by
adding a point to the origin. Hence, from now on,  we
understand that $\G_u$ is replaced by $\G_u+1$ if $u=0$.
We call $(\G,\r)$ this new network. Thus the results will follow
once we prove that for $(\G,\r)$ the effective resistance from $0$
to $Q_{0,2n}^c=\{u\in\bbZ^d:\;\|u\|_\infty > n\}$ satisfies
the desired bounds. From now on we consider the cases $d=1$
and $d=2$ separately.

\subsection{Proof of Theorem \ref{fegato}}
Set $d=1$. We further reduce the network $(\G,\r)$ introduced above
by collapsing in a single node $\wt v$ each pair $\{v,-v\}$. This
gives a network on $\{0,1,2,\dots\}$ where across each pair $0\leq i
< j$ there are now $\wt \G_i\wt \G_j$ wires, where
$\wt\G_i:=\G_i+\G_{-i}$ ($i\neq 0$) and $\wt \G_0:=\G_0$
(recall that by $\G_0$ we now mean the original $\G_0$ plus $1$).
Each of these wires has a resistance at least $\r_{i,j}$ and thus we
further reduce the network by assigning each wire the same
resistance $\r_{i,j}$. We shall call $(\wt \G,\r)$ this new network
and $\wt R_n(0)$ its  effective resistance from $0$ to
$Q_{0,2n}^c$.

An application of the variational formula (\ref{var_con}) to the
network $(\wt\G,\r)$ yields  the upper bound
 \be\la{est1} \wt
R_n(0)^{-1}=\wt C_n(0)\leq \frac1{f_n^2}\sum_{i=0}^n
\sum_{j=i+1}^\infty \wt\G_i\wt\G_j\,(j-i)^{-1-\a} (f_j-f_i)^2 \,,
\end{equation}
for any   sequence  $\{f_i\}_{i\geq 0} $ such that $f_i=f(i)$, $f$
  being a non--decreasing  function   on
  $[0,\infty)$ taking value $0$ only at the origin.

Next,   we choose  $f$ as
\begin{equation}
f(x):= \int_0^xg_\alpha(t)dt, \quad g_\alpha(t) := \left(
  1+\int_0^{t}\left(
    1\wedge\frac{s^2}{s^{1+\alpha}}
  \right)ds
\right)^{-1}\,.
\end{equation}
Note that $f$ satisfies  the differential equation
\begin{equation}
f'(t)^2\left(1+\int_0^{t}\left(1\wedge\frac{s^2}{s^{1+\alpha}}\right)ds\right)
= f'(t)\,. \label{ed}
\end{equation}
Moreover,  $f$ is increasing on $[0,\infty)$, $f(0)=0$ and
$f_k=f(k)$ behaves as
\begin{equation} \la{asymp}
f_k
 \sim
\begin{cases}
  \log k  & \text{ if } \a=1\,,\\
  k^{\a-1} & \text{ if } 1<\a < 2\,,\\
k /\log k &\text { if } \a =2\,,\\
 k & \text{ if  } \a>2\,.
 \end{cases}
 \end{equation}
Here $f_k\sim a_k$ means that there is a constant
$C\geq 1$ such that $C^{-1}\,a_k\leq f_k\leq C\,a_k$, for all $k\geq C$.
Since $g_\a$ is non--increasing we have the concavity bounds
\be\la{conc}
f_j-f_i\leq g_\a(i)\,(j-i)\,,\qquad j\geq i\geq 0\,.
\end{equation}

Let us first prove the theorem for the easier case $\a>2$. We point
out that here we do not need  condition (\ref{expe}) and a finite
first moment condition suffices. Indeed, set
$\xi_i:=\sum_{j>i}(j-i)^{1-\a}\wt \G_j$. This random variables are
identically distributed and have finite first moment since $\a>2$.
Note that $\wt \G_i$ and $\xi_i$ are independent so that
$\bbE[\G_i\,\xi_i]<\infty$. From the ergodic theorem it follows that
there exists a constant $C$ such that $\bbP$--a.s.\
$$
\sum_{i=0}^n\wt \G_i\,\xi_i \leq C\,n \,,
$$
for all $n$ sufficiently large. Due to \eqref{est1},  we conclude
that $\wt C_n(0)\leq n^{-2}\sum_{i=0}^n\wt \G_i\,\xi_i \leq
C\,n^{-1}$ and the desired bound $\wt R_n(0)\geq c\,n$ follows.

The case $1\leq \a\leq 2$ requires more work. Thanks to our choice
of  $f$, we shall prove the following deterministic estimate.
\begin{Le}\la{est10}
There exists a constant $C<\infty$ such that for any $\a\geq 1$
\be\la{est100}  X_i:=\sum_{j=i+1}^\infty \,(j-i)^{-1-\a}
(f_j-f_i)^2 \leq C\,g_\a(i)\,,\quad \; i\in\bbN \,.
\end{equation}
\end{Le}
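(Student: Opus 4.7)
The plan is to split $X_i$ at the natural scale $r=i$ and to bound the near regime $r\leq i$ using concavity of $f$ and the far regime $r>i$ using the large-scale behaviour of $f$. In both parts the goal is to reduce the estimate to $\leq C\,g_\alpha(i)$, and the unifying algebraic fact will be the identity $g_\alpha(i)\,h(i)=1$ with $h(t)=1+\int_0^t(1\wedge s^{1-\alpha})\,ds$, which is essentially a rewriting of the ODE \eqref{ed}.

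For the near regime I will apply the concavity estimate \eqref{conc} to obtain $(f_{i+r}-f_i)^2\leq g_\alpha(i)^2 r^2$. Summing over $r\leq i$ gives a contribution bounded by $g_\alpha(i)^2\sum_{r=1}^{i}r^{1-\alpha}$. A direct comparison of the partial sum with the integral defining $h$ yields $\sum_{r=1}^{i}r^{1-\alpha}\leq C\,h(i)=C/g_\alpha(i)$ for every $\alpha\geq 1$, so the near piece is $\leq C\,g_\alpha(i)$.

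For the far regime the natural starting point is the sub-additivity $f_{i+r}-f_i\leq f_r$, which follows from concavity and $f(0)=0$. Inserting the asymptotics \eqref{asymp} of $f_r$ into $\sum_{r>i}r^{-1-\alpha}f_r^{\,2}$ and checking the cases $\alpha>2$, $\alpha=2$, $1<\alpha<2$ one at a time produces directly a bound $\leq C\,g_\alpha(i)$. The genuine difficulty will be the borderline case $\alpha=1$: sub-additivity there gives $(\log i)^2/i$, whereas the target is $g_1(i)\sim 1/i$, so one loses a logarithmic factor. For $\alpha=1$ I will instead use the explicit form $f_{i+r}-f_i=\log((1+i+r)/(1+i))$ and the sharper inequality $f_{i+r}-f_i\leq C(1+\log^{+}(r/(1+i)))$; the change of variables $u=r/(1+i)$ then reduces the tail to $1/(1+i)$ times the fixed finite integral $\int_1^{\infty}u^{-2}(1+\log u)^2\,du$, recovering the required bound $O(g_1(i))$.

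The principal obstacle is therefore the borderline case $\alpha=1$: the generic sub-additivity bound is too lossy by a logarithmic factor on the far scale, and one must exploit the exact logarithmic shape of $f_1$ so as not to throw away the cancellation between $i$ and $r$ in $f_{i+r}-f_i$.
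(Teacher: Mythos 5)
Your proof is correct and follows essentially the same route as the paper's: the same split of the sum at $j=2i$, the same use of the concavity bound \eqref{conc} combined with the identity $g_\a(i)\,h(i)=1$ for the near part, and the same explicit logarithm $f_{i+r}-f_i=\log\frac{1+i+r}{1+i}$ with a scaling change of variables for the borderline case $\a=1$. The only immaterial difference is in the far regime for $\a>1$, where you invoke subadditivity $f_{i+r}-f_i\le f_r$ and the asymptotics \eqref{asymp}, while the paper drops $f_i$, uses monotonicity of the general term, and compares with the integral of $f(x)^2/x^{1+\a}$; both yield the same bounds.
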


Let us assume the validity of Lemma \ref{est10} for the moment and
define the random variables
$$
\xi_i:= \sum_{j=i+1}^\infty
\wt\G_j\,(j-i)^{-1-\a}
(f_j-f_i)^2\,.
$$
Let us show that $\bbP$-a.s.\ $\xi_i$ satisfies the same bound as
 $X_i$ in Lemma \ref{est1}. Set
$\L(\l):=\log\bbE[\nep{\l\wt\G_i}]$. From assumption (\ref{expe}) we
know $\L(\l)<\infty$ for all $\l\leq\e$ for some $\e>0$. Moreover,
$\L(\l)$ is convex and $\L(\l)\leq c\,\l$ for some constant $c$, for
all $\l\leq \e$.
  Therefore, using Lemma \ref{est1} we have, for some new constant $C$:
\begin{align}
\bbE[\nep {a_i\xi_i}] &= \prod_{j>i}
\exp{\left[\L(a_i (j-i)^{-1-\a}(f_j-f_i)^2)\right]}\nonumber\\
&\leq \prod_{j>i}\exp {\left[c\,a_i (j-i)^{-1-\a}(f_j-f_i)^2\right]}\nonumber\\
& = \exp {[c\,a_i\,X_i]} \leq \exp {[C\,a_i\, g_\a(i)]}\,,
\la{expo1}
\end{align}
 provided the numbers $a_i>0$ satisfy $a_i(j-i)^{-1-\a}(f_j-f_i)^2\leq \e$ for all $j>i>0$.
Note that the last requirement is satisfied
by the choice $a_i:=\e/g_\a(i)^2$ since, using (\ref{conc}):
$$
a_i(j-i)^{-1-\a}(f_j-f_i)^2\leq a_i(j-i)^{1-\a}g_\a(i)^2\leq a_i\,g_\a(i)^2\,,
$$
for $j>i$, $\a\geq 1$.
This will be our choice of $a_i$ for $1\leq \a\leq 2$. From (\ref{expo1}) we have
\begin{align}\la{proest}
\bbP(\xi_i>2c_1\,\e^{-1}\,g_\a(i)) &\leq \exp{(-2a_ic_1\e^{-1}g_\a(i))}
\bbE[\nep{a_i\xi_i}]
\\& \leq \exp{(-2c_1\,\e^{-1}\,a_i\,g_\a(i))}\exp{(C\,a_i \,g_\a(i))}
\leq \nep{-c_1\,\e\,g_\a(i)^{-1}}\,,
\nonumber
\end{align}
if $c_1$ is large enough. Clearly, $g_\a(i)\leq C(\log i)^{-1}$ for
$1\leq \a\leq 2$ and $i$ large enough. Therefore, if $c_1$ is
sufficiently large, the left hand side in (\ref{proest}) is summable
in $i\in\bbN$ and the Borel Cantelli lemma implies that
$\bbP$--a.s.\ we have $\xi_i \leq c_2\,g_\a(i)$, $c_2:=2c_1\e^{-1}$,
for all $i\geq i_0$, where $i_0$ is an a.s.\ finite random
number.

Next, we write
\be\la{fio}
\sum_{i=0}^n
\sum_{j=i+1}^\infty
\wt\G_i\wt\G_j\,(j-i)^{-1-\a}
(f_j-f_i)^2 \leq \sum_{i=0}^{i_0}\wt\G_i\xi_i + c_2\sum_{i=1}^n\wt\G_i\,g_\a(i)\,.
\end{equation}
The first term is  an a.s.\ finite random number. The second term is
estimated as follows. First, note that \be\la{sumg}
\sum_{i=1}^n\,g_\a(i)\leq f_n\,,
\end{equation} since by concavity
$$
f_{n}=\sum_{j=0}^{n-1} (f_{j+1}-f_j) \geq \sum_{j=0}^{n-1} g_\a(j+1) = \sum_{j=1}^n g_\a(i)\,.
$$
Then we estimate
$$
\bbP(\sum_{i=1}^n\wt\G_i\,g_\a(i)>2c_3\,f_n)
\leq \nep{-2c_3\,f_n}\prod_{i=1}^n\bbE[\nep{\wt\G_i\,g_\a(i)}]
= \nep{-2c_3\,f_n}\,\nep{\sum_{i=1}^n\L(g_\a(i))}\,.
$$
If $i$ is large enough (so that $g_\a(i)\leq \e$) we can estimate
$\L(g_\a(i))\leq c\,g_\a(i)$.
Using (\ref{sumg}) we then have, for $c_3$ large enough
$$
\bbP(\sum_{i=1}^n\wt\G_i\,g_\a(i)>2c_3\,f_n)
\leq \nep{-c_3\,f_n}\,.
$$
Since $f_n\geq \log n$ for all $\a\geq 1$ we see that, if $c_3$ is
sufficiently large, the Borel Cantelli lemma implies that the second
term in (\ref{fio}) is $\bbP$--a.s.\ bounded by $2c_3\,f_n$ for all
$n\geq n_0$ for some a.s.\ finite random number $n_0$. It follows
that there exists an a.s.\ positive constant $c>0$ such that
$\wt R_n(0) \geq c\,f_n$. The proof of Theorem \ref{fegato} is
thus complete once we prove the deterministic estimate in Lemma
\ref{est10}.

\medskip

\noindent {\em Proof of Lemma \ref{est10}}. We only need to consider
the cases $\a\in[1,2]$.  We divide the sum in two terms
\begin{equation}
X_i = \sum_{j=i+1}^{2i} \frac{(f_j-f_i)^2}{(j-i)^{1+\alpha}}
+\sum_{j>2i} \frac{(f_j-f_i)^2}{(j-i)^{1+\alpha}}\,.
\end{equation}
We can estimate the first term
by using the concavity of $f$
and equation (\ref{ed}):
\begin{equation}
\sum_{j=i+1}^{2i} \frac{(f_j-f_i)^2}{(j-i)^{1+\alpha}} \leq
g^2_\alpha(i)\sum_{k=1}^{i} \frac{k^2}{k^{1+\alpha}} \leq C\,
g_\alpha(i)\,.
\end{equation}
As far as the second term is concerned, first observe that
${(f_j-f_i)}/{(j-i)}$ is non-increasing in $j$ (by concavity of $f$)
and so is the general term of the series. As a consequence
\begin{equation}
\sum_{j>2i} \frac{(f_j-f_i)^2}{(j-i)^{1+\alpha}} \leq
\int_{2i}^{+\infty}\frac{(f(x)-f(i))^2}{(x-i)^{1+\alpha}}dx\,.
\end{equation}
In the case $\alpha = 1$ we get, for any $i\geq 1$,
\begin{multline}
\sum_{j>2i} \frac{(f_j-f_i)^2}{(j-i)^{1+\alpha}} \leq
\int_{2i}^{+\infty}\left(\frac{1}{x-i}\ln\frac{1+x}{1+i}\right)^2dx
\leq \\ \frac{1}{i} \int_{2i}^{+\infty}\left(\frac{1}{\frac{x}{i}-1}
\ln\frac{x}{i}\right)^2 \frac{dx}{i} \leq 2g_\alpha(i)
\int_{2}^{+\infty}\left(\frac{\ln t}{t-1}\right)^2dt\,.
\end{multline}
In the case $\alpha>1$ we have,
\begin{equation}
\sum_{j>2i} \frac{(f_j-f_i)^2}{(j-i)^{1+\alpha}} \leq
{2}^{1+\alpha}\int_{2i}^{+\infty}\frac{f^2(x)} {x^{1+\alpha}}dx\,,
\end{equation}
so that, for  $1<\alpha<2$, there are two positive constants $C$ and
$C'$ such that
\begin{equation}
 \sum_{j>2i} \frac{(f_j-f_i)^2}{(j-i)^{1+\alpha}} \leq
{2}^{1+\alpha}
\int_{2i}^{+\infty}C^2\frac{x^{2\alpha-2}}{x^{1+\alpha}}dx \leq
\frac{{2}^{1+\alpha}C^2}{2-\alpha}(2i)^{\alpha-2} \leq C'g_\alpha(i)
\end{equation}
and, for $\alpha=2$, there are two positive
constants $C$ and $C'$
such that
\begin{equation}
\sum_{j>2i} \frac{(f_j-f_i)^2}{(j-i)^{1+\alpha}} \leq
8\int_{2i}^{+\infty}\frac{C^2}{x\log^2x}dx \leq \frac{8C^2}{\log 2i}
\leq C'g_\alpha(i)\,.
\end{equation}
\qed

\subsection{Proof of Theorem \ref{combine}}\label{bambi}
To prove Theorem \ref{combine} we shall make a
series of network reductions which allow us to arrive at a nearest neighbor
one--dimensional problem. We start from the network $(\G,\r)$ defined at the beginning of this
section.

We write $F_a= \{u\in\bbZ^2:\;   \|u\|_\infty=a\}$, $a\in\bbN$. The next reduction is
obtained by collapsing all nodes $u\in F_a$ into a single node for each $a\in\bbN$.

Once all nodes in each $F_a$ are identified we are left with a
one-dimensional network with nodes $a\in\{0,1,\dots\}$. Between
nodes $a$ and $b$ we have a total of $\sum_{u\in F_a}\G_u\sum_{v\in
F_b}\G_v$ wires, with a wire of resistance $\r_{u,v}$ for each $u\in
F_a$ and $v\in F_b$. Finally, we perform a last reduction which
brings us to a nearest--neighbor one--dimensional network.  To this
end we consider a single wire with resistance $\r_{u,v}$ between
node $a$ and node $b$, with $a<b-1$. This wire is equivalent to a
series of $(b-a)$ wires, each with resistance $\r_{u,v}/(b-a)$. That
is we can add $(b-a-1)$ fictitious points to our network in such a
way that the effective resistance does not change. Moreover the
effective resistance decreases if each added point in the series is
attached to its corresponding node $a+i$, $i=1,\dots,b-a-1$, in the
network. If we repeat this procedure for each wire across every pair
of nodes $a<b-1$ then we obtain a nearest neighbor network where
there are infinitely many wires in parallel across any two
consecutive nodes. In this new network, across the pair $i-1,i$ we
have a resistance $R_{i-1,i}$ such that \be\la{rii}
\phi_i:=R_{i-1,i}^{-1} = \sum_{a<i}\sum_{b\geq i}\sum_{u\in
F_a}\sum_{v\in F_b} (b-a)\,\G_u\G_v\,\r_{u,v}^{-1}  \,.
\end{equation}
Moreover, the reductions described above show that
$$ R_n(x)\geq \sum_{i=1}^{n+1}R_{i-1,i} \,.$$
Therefore Theorem \ref{combine} now follows from the estimates
on $R_{i-1,i}$ given in the next lemma.
\begin{Le}\label{ciccolo}
There exists a positive constant $c$ such that
$\bbP$--almost surely, for $i$ sufficiently large
\begin{equation}\label{squadra}
  R_{i,i+1} \geq\,c\,
 \begin{cases}
 i^{-1} & \text{ if } \;\a>2\,,\\
(i \log i )^{-1} & \text{ if } \; \a=2\,.
\end{cases}
 \end{equation}
\end{Le}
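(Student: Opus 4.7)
The plan is to show that $\bbP$-almost surely $\phi_i:=R_{i-1,i}^{-1}$ satisfies $\phi_i\leq C\,g(i)$ for all $i$ large, with $g(i)=i$ if $\a>2$ and $g(i)=i\log i$ if $\a=2$; the claim (\ref{squadra}) then follows after a shift of index. Writing $w_{u,v}:=(b-a)|u-v|^{-2-\a}$ for $u\in F_a$, $v\in F_b$, $a<i\leq b$, the quantity $\phi_i$ equals, up to an absolute constant, the bilinear form $\sum_{u,v}w_{u,v}\G_u\G_v$.

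First I would establish the purely geometric estimate
\begin{equation*}
\sum_{v\in F_b}|u-v|^{-2-\a}\leq C(b-a)^{-1-\a}\qquad\text{for } u\in F_a,\;b>a,
\end{equation*}
by comparing the sum with a one-dimensional integral along the cube boundary $F_b$; the dominant contribution comes from the side of $F_b$ nearest to $u$ (at distance $b-a$), while the three other sides contribute at most $Cb^{-1-\a}\leq C(b-a)^{-1-\a}$. Combined with $|F_a|\leq Ca$ and $\sup_u\bbE[\G_u]<\infty$, this gives
\begin{equation*}
\bbE[\phi_i]\leq C\sum_{a<i}a\sum_{b\geq i}(b-a)^{-\a}\leq C\sum_{a<i}a\,(i-a)^{1-\a},
\end{equation*}
which after the substitution $k=i-a$ is bounded by $Ci$ for $\a>2$ and by $Ci\log i$ for $\a=2$.

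To control the fluctuations, write $\G_u=\bbE[\G_u]+\eta_u$ and split $\phi_i-\bbE[\phi_i]=T_1+T_2$, where
\begin{equation*}
T_1=\sum_u A_u\eta_u+\sum_v B_v\eta_v,\qquad A_u:=\sum_v w_{u,v}\bbE[\G_v],\;\;B_v:=\sum_u w_{u,v}\bbE[\G_u],
\end{equation*}
is a sum of independent centered terms and $T_2=\sum_{u,v}w_{u,v}\eta_u\eta_v$ is the bilinear remainder. Rosenthal's inequality gives $\bbE[T_1^4]\leq C\bigl[(\sum_u A_u^2)^2+\sum_u A_u^4+(\sum_v B_v^2)^2+\sum_v B_v^4\bigr]$. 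Expanding $\bbE[T_2^4]$ and using that $\{\eta_u\}_{\|u\|_\infty<i}$ is independent of $\{\eta_v\}_{\|v\|_\infty\geq i}$ (so any non-zero term must come from pairings of $u$-indices among themselves and of $v$-indices among themselves) similarly yields
\begin{equation*}
\bbE[T_2^4]\leq C\Bigl[\bigl(\textstyle\sum w_{u,v}^2\bigr)^2+\sum w_{u,v}^4+\sum_u\bigl(\textstyle\sum_v w_{u,v}^2\bigr)^2+\sum_v\bigl(\textstyle\sum_u w_{u,v}^2\bigr)^2\Bigr].
\end{equation*}
The terms $\sum A_u^4$, $\sum B_v^4$, $\sum w_{u,v}^4$ are exactly the ones requiring assumption (\ref{fourth}). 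Repeating the geometric summation with the exponent $2+\a$ replaced by $2(2+\a)$ and $4(2+\a)$ bounds each of these quantities by $Ci$ for $\a\geq 2$, so $\bbE[T_j^4]\leq Ci^2$ for $j=1,2$.

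Since $g(i)\geq i$, Markov's inequality gives $\bbP\bigl(|T_j|>\tfrac14 g(i)\bigr)\leq Ci^2/g(i)^4\leq C/i^2$, which is summable in $i$. Borel--Cantelli then forces $|T_1|+|T_2|\leq\tfrac12 g(i)$ $\bbP$-a.s. for all $i$ sufficiently large, and combining with the bound on $\bbE[\phi_i]$ yields $\phi_i\leq C g(i)$ a.s.\ eventually, proving (\ref{squadra}). The main obstacle is the bookkeeping in the geometric summations $\sum_u A_u^k$ and $\sum_{u,v}w_{u,v}^k$ for $k=2,4$: one splits the $b$-sum at $b=2i$ and substitutes $k=i-a$, and the borderline convergence at $\a=2$ in the computation of $\bbE[\phi_i]$ is precisely what produces the extra $\log i$ factor in $g(i)$.
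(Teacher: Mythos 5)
Your proposal is correct and follows essentially the same route as the paper: a first--moment bound $\bbE[\phi_i]=O(\o_i)$ via the geometric estimate of Lemma \ref{bimbo}, a fourth central moment bound $\bbE\bigl[(\phi_i-\bbE\phi_i)^4\bigr]=O(i^2)$, and Markov plus Borel--Cantelli. The only difference is cosmetic: you split the centered sum into a linear part (handled by Rosenthal) and a bilinear chaos in the $\eta_u$'s, whereas the paper expands $(\phi_i-\bbE\phi_i)^4$ directly and classifies the surviving index coincidences; both reduce to the same geometric sums of the type $\sum_u A_u^k$, $\sum_v B_v^k$, $\sum_{u,v} w_{u,v}^k$.
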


\proof
We  first
show that $\bbE( \phi_i )\leq C  \o_i$,
 where $ \o_i =i$ if $\a>2$ and $\o_i= i \log i$ if $\a=2$, where
$\bbE$ denotes expectation w.r.t.\ the field $\{\G_u,\;u\in\bbZ^2\}$.

Thanks to Lemma \ref{bimbo} given in the Appendix, from
(\ref{rii}) we have
\begin{equation}\la{split1}
\bbE(\phi_i) \leq c_1\,\sum_{a<i}\sum_{b\geq i}a(b-a)^{-\a}\,.
\end{equation}
Next we estimate 
$\sum_{b\geq i}(b-a)^{-\a}\leq c_2\,(i-a)^{1-\a}$, so that
using the Riemann integral we obtain
\begin{align*}
\bbE(\phi_i) &\leq c_2\sum_{a<i}a(i-a)^{1-\a}
=c_2 i^{2-\a}
\sum_{a<i}
\frac{a}{i}\left(1-\frac{a}{i}\right)^{1-\a}
\\ &\leq c_3 i^{3-\a}
\int _0 ^{1-\frac{1}{i}} y(1-y)^{1-\a}dy \leq c_3 i^{3-\a} \int _{1/i} ^1 y^{1-\a}dy
\leq c_4\, \o_i\,.
\end{align*}

Hence, for $C$ large we can estimate
\begin{equation}
\bbP ( \phi _i \geq  2C  \o_i ) \leq \bbP (
\phi _i - \bbE (\phi_i) \geq  C \o_i ) \leq (C\,\o_i
)^{-4} \bbE \left[ \bigl( \phi _i -\bbE  (\phi_i)
\bigr)^4 \right]\,,
\end{equation}
where we use $\bbP$ to denote the law of the variables $\{\G_u\}$.

The proof then follows from the Borel--Cantelli Lemma and the following estimate to be established
below: There exists $C<\infty$ such that for all $ i \in \bbN$
\begin{equation}\label{incantesimo}
\bbE  \left[ \bigl( \phi _i -\bbE(\phi_i)
\bigr)^4 \right] \leq C\,i^2\,.
\end{equation}
To prove (\ref{incantesimo}) we write
\be\label{trastevere}
\bbE
\left[ \bigl( \phi _i -\bbE (\phi_i)
\bigr)^4 \right] = \sum_{{\bf a}}\sum_{{\bf b}}\sum_{{\bf u}\sim {\bf a}}\sum_{{\bf v}\sim {\bf b}}\Phi({\bf u},{\bf v})\,G({\bf u},{\bf v})\,,
\end{equation}
where the sums are over ${\bf a}=(a_1,\dots,a_4)$, ${\bf b}=(b_1,\dots,b_4)$
such that $a_k<i\leq b_k$,
${\bf u}\sim {\bf a}$ stands for the set of ${\bf u}=(u_1,\dots,u_4)$ such that
$u_k\in F_{a_k}$, and we have defined, for ${\bf u}\sim {\bf a}$, ${\bf v}\sim {\bf b}$:
$$
\Phi({\bf u},{\bf v})=\prod_{k=1}^4(b_k-a_k)
\r_{u_k,v_k}\,,\quad G({\bf u},{\bf v})=\prod_{k=1}^4
\left(\G_{u_k}\G_{v_k}-\bbE[\G_{u_k}\G_{v_k}]\right)\,.
$$
From the independence assumption on the field $\{\G_u\}$ we know that
$G({\bf u},{\bf v})=0$ unless for every $k=1,\dots,4$ there exists a
$k'=1,\dots,4$ with $k\neq k'$ and $\{u_k,v_k\} \cap
\{u_{k'},v_{k'}\} \neq \emptyset$.
Moreover, when this condition is satisfied using (\ref{fourth})
we can easily bound
$G({\bf u},{\bf v})\leq C$ for some constant $C$.

By symmetry we may then estimate
\begin{align}
&\sum_{{\bf a}}\sum_{{\bf b}}\sum_{{\bf u}\sim {\bf a}}\sum_{{\bf v}\sim {\bf b}}\Phi({\bf u},{\bf v})\,G({\bf u},{\bf v})\nonumber\\
&\leq C\sum_{{\bf a}}\sum_{{\bf b}}\sum_{{\bf u}\sim {\bf a}}\sum_{{\bf v}\sim {\bf b}}\Phi({\bf u},{\bf v})\,\chi\left(
\forall k\,\exists k'\neq k:\; \{u_k,v_k\} \cap
\{u_{k'},v_{k'}\} \neq \emptyset\,\right)\nonumber\\
&\leq 3\,C\sum_{{\bf a}}\sum_{{\bf b}}\sum_{{\bf u}\sim {\bf a}}
\sum_{{\bf v}\sim {\bf b}}\Phi({\bf u},{\bf v})\,\Big[
\chi\left(u_1=u_2\,;\;u_3=u_4\right)
+ \nonumber\\
&\qquad \qquad\qquad + \chi\left(u_1=u_2\,;\;v_3=v_4\right)
+\chi\left(v_1=v_2\,;\;v_3=v_4\right)\Big]\,.
\la{terms}
\end{align}
We claim that each of the three terms in the summation above is of order $i^2$ as $i$ grows.
This will prove the desired estimate  (\ref{incantesimo}).

The first term in (\ref{terms}) satisfies
\be\la{a1}
\sum_{{\bf a}}\sum_{{\bf b}}\sum_{{\bf u}\sim {\bf a}}
\sum_{{\bf v}\sim {\bf b}}\Phi({\bf u},{\bf v})\,
\chi\left(u_1=u_2\,;\;u_3=u_4\right)\leq A(i)^2\,,
\end{equation}
where
$$
A(i):=\sum_{a_1<i}\sum_{b_1\geq i}\sum_{b_2\geq i}
(b_1-a_1)(b_2-a_1)\sum_{u_1\in F_{a_1}}\sum_{v_1\in F_{b_1}}\sum_{v_2\in F_{b_2}}
\r_{u_1,v_1}\r_{u_1,v_2}\,.
$$
Similarly the third term in (\ref{terms}) is estimated by $B(i)^2$, with
$$
B(i):=\sum_{a_1<i}\sum_{a_2<i}
\sum_{b_1\geq i}
(b_1-a_1)(b_1-a_2)\sum_{u_1\in F_{a_1}}\sum_{u_2\in F_{a_2}}
\sum_{v_1\in F_{b_1}}
\r_{u_1,v_1}\r_{u_2,v_1}\,.
$$
Finally, the middle term in (\ref{terms}) is estimated by the product $A(i)B(i)$.
Therefore, to prove (\ref{incantesimo}) it suffices to show that
$A(i)\leq C\,i$ and $B(i)\leq C\,i$.

Using Lemma \ref{bimbo} we see that
$$
\sum_{u_1\in F_{a_1}}\sum_{v_1\in F_{b_1}}\sum_{v_2\in F_{b_2}}
\r_{u_1,v_1}\r_{u_1,v_2} \leq C\, a_1\,(b_1-a_1)^{-1-\a}\,(b_2-a_1)^{-1-\a}\,.
$$
This bound yields
$$
A(i)\leq C\,\sum_{a_1<i} a_1(i-a_1)^{2-2\a}\,,
$$
for some new constant $C$, where we have used the fact that
$$\sum_{b_2\geq i}(b_2-a_1)^{-\a}\leq C(i-a_1)^{1-\a}\,.$$
Using the Riemann integral we obtain
\begin{align*}
\sum_{a_1<i} a_1(i-a_1)^{2-2\a} &\leq C\,i^{4-2\a}\int_0^{1-1/i}
x(1-x)^{2-2\a} dx\\
& \leq C\,i^{4-2\a}\int_{1/i}^1
x^{2-2\a}dx\leq (2\a-3)C\,i\,.
\end{align*}
This proves that $A(i)=O(i)$. Similarly, from
Lemma \ref{bimbo} we see that
$$
\sum_{u_1\in F_{a_1}}\sum_{u_2\in F_{a_2}}\sum_{v_1\in F_{b_1}}
\r_{u_1,v_1}\r_{u_2,v_1} \leq
C\, b_1^{-1}\,a_1\,a_2\,(b_1-a_1)^{-1-\a}\,(b_1-a_2)^{-1-\a}\,.
$$
Therefore
\begin{align*}
B(i) & \leq C
\sum _{b_1\geq i} b_1^{-1}
 \left[ \sum_{a_1<i}   a_1(b_1-a_1)^{-\a} \right]^2 \\
& \leq  C' \,i^2\,
\sum _{b_1\geq i} b_1^{-1} (b_1-i+1)^{2-2\a}
\leq C''\,i\,,
\end{align*}
where we have used the estimate $$
\sum_{a_1<i}   a_1(b_1-a_1)^{-\a}\leq i\sum_{a_1<i}  (b_1-a_1)^{-\a}\leq C\,i\,(b_1-i+1)^{1-\a}\,,
$$
and the fact that for $\a\geq 2$ we have
$$\sum _{b_1\geq i} b_1^{-1} (b_1-i+1)^{2-2\a}\leq i^{-1}\sum_{k=1}^\infty k^{-2}=C/i\,.$$
\qed

%


\smallskip\smallskip

We remark that
a proof of Theorem \ref{combine} could be obtained by application of the variational principle (\ref{var_con}) as in the proof of Theorem \ref{fegato}. To see this one can start
from the network $(\G,\r)$ introduced at the beginning of this section and choose a trial function that is constant in each $F_a$.
Then, for any non--decreasing sequence $(f_0,f_1,\dots)$  such that
$f_0=0$ and $f_k >0$ eventually, one has $R_n(x)\geq A_n(f)$ where
\be\la{ccn0} A_n(f) = f_n^{-2}\sum_{a=0}^n\sum_{b=a+1}^\infty
(f_b-f_a)^2\sum_{u\in F_a}\G_u\sum_{v\in F_b} \G_v\,|v-u|^{-2-\a}\,.
\end{equation}
We then choose $f_k=\log (1+k)$ for $\a>2$ and $f_k=\log (\log (e + k))$ for $\a=2$ and the desired conclusions will follow from suitable control of the fluctuations of the random sum appearing in
(\ref{ccn0}). Here the analysis is slightly more involved than that in the proof of Theorem
\ref{fegato} and it requires estimates as in (\ref{terms}) above. Moreover, one needs a fifth
moment assumption with this approach instead of the fourth moment condition (\ref{fourth}).
Under this assumption, and using Lemma \ref{bimbo},
it is possible to show that a.s.\ there exists a constant $c$ such that
\be\la{estra}
\sum_{u\in F_a}\G_u\sum_{v\in F_b}
\G_v\,|v-u|^{-2-\a}\leq c\,a\,(b-a)^{-1-\a}\,,\quad\,a<b\,.
\end{equation}
Once this estimate is available the proof follows from simple calculations.

\section{Proof of Proposition \ref{treno1} and Theorem \ref{alex}}

\subsection{Proof of Proposition \ref{treno1}}\label{massimo}

The proof of Proposition \ref{treno1} is based on the following
technical lemma related to renewal theory:

\begin{Le}\label{patrono}
Given $\d>1$, define the probability kernel
\begin{equation}\label{defq}
q_k = c(\d) k^{-\d}   \qquad k \in \bbN\,,
\end{equation}
($c(\d)$ being the normalizing constant $ 1/ \sum _{k\geq 1}
k^{-\d})$ and define recursively the sequence $f(n)$ as
\begin{equation*}
\begin{cases}
f(0)=1\,,\\
f(n)=\sum_{k=0}^{n-1} f(k) q_{n-k}\,,\qquad n\in \bbN\,.
\end{cases}
\end{equation*}
If $1<\d<2$, then
\begin{equation}\label{salina}
\lim _{n\uparrow \infty} n^{2-\d} f(n) = \frac{
\G(2-\d)}{\G(\d-1)}\,.
\end{equation}
\end{Le}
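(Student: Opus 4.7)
The recursion is the standard renewal equation for the heavy-tailed step distribution $q$, and the strategy is a generating function / Tauberian analysis combined with a strong renewal theorem to pass from averaged to pointwise asymptotics.

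Setting $F(z) := \sum_{n\geq 0} f(n) z^n$ and $Q(z) := \sum_{k\geq 1} q_k z^k = c(\delta)\,\mathrm{Li}_\delta(z)$, the recursion and $f(0)=1$ yield $F(z)(1-Q(z))=1$, hence $F(z)=(1-Q(z))^{-1}$. I would then compute the singular behaviour of $Q$ at $z=1$. For $\delta\in(1,2)$ the expansion $\mathrm{Li}_\delta(e^{-t}) = \Gamma(1-\delta)\,t^{\delta-1} + \zeta(\delta) + O(t)$ (which may be derived either from the polylogarithm literature or directly by comparing $\sum_k k^{-\delta}(1-e^{-kt})$ with the integral $\int_0^\infty u^{-\delta}(1-e^{-u})\,du = \Gamma(2-\delta)/(\delta-1)$) gives
\[
1-Q(z) \sim \frac{c(\delta)\,\Gamma(2-\delta)}{\delta-1}\,(1-z)^{\delta-1},\qquad z\to 1^-,
\]
so that $F(z)\sim K\,(1-z)^{1-\delta}$ with $K = (\delta-1)/[c(\delta)\,\Gamma(2-\delta)]$.

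Since $f\geq 0$, Karamata's Tauberian theorem then yields
\[
\sum_{k=0}^n f(k) \sim \frac{K}{\Gamma(\delta)}\,n^{\delta-1}.
\]
The main obstacle is to upgrade this averaged statement to the pointwise asymptotic $f(n)\sim D\,n^{\delta-2}$: Karamata alone controls only partial sums, and the infinite mean $\sum k q_k = \infty$ precludes direct use of the elementary (Blackwell) renewal theorem. My plan is to invoke the strong renewal theorem of Garsia and Lamperti for step distributions with regularly varying tail $P(X>n)\sim L(n)\,n^{-\alpha}$, $\alpha\in(0,1)$, applied here with $\alpha=\delta-1$ and $L\equiv c(\delta)/(\delta-1)$. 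The resulting pointwise constant is then rewritten in the form claimed by means of the reflection identity $\Gamma(\delta-1)\Gamma(2-\delta)=\pi/\sin(\pi(\delta-1))$, together with $\Gamma(\delta)=(\delta-1)\Gamma(\delta-1)$.

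An alternative to citing Garsia and Lamperti would be to establish directly from the recursion that the renormalized sequence $n^{2-\delta}f(n)$ is eventually monotone, after which the Tauberian asymptotic would immediately produce the pointwise one; this monotonicity is however not apparent from the recursion in this heavy-tailed regime, and invoking the strong renewal theorem appears to be the cleanest route.
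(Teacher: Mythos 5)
Your overall strategy is the same as the paper's once the packaging is removed: the identity $F(z)=(1-Q(z))^{-1}$ is exactly the statement that $f(n)$ is the renewal mass function $u(n)=\sum_{m\geq 0}P(S_m=n)$ of the random walk with step law $q$, and both you and the paper ultimately reduce the pointwise asymptotics to a strong renewal theorem for a step distribution with tail index $\alpha=\delta-1\in(0,1)$ and infinite mean. (The paper makes the identification $f=u$ directly from the recursion and then quotes Theorem B of \cite{Do}.) Your intermediate Karamata step is correct but does no work: it only yields the Ces\`aro asymptotics $\sum_{k\leq n}f(k)\sim C\,n^{\delta-1}$, which is strictly weaker than \eqref{salina} and is in any case subsumed by whichever strong renewal theorem you end up invoking.

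The genuine gap is the citation you propose for the pointwise step. The Garsia--Lamperti theorem gives the strong renewal theorem unconditionally only for $\alpha\in(1/2,1)$, i.e.\ $\delta\in(3/2,2)$; for $\alpha\leq 1/2$ Garsia and Lamperti themselves exhibit step laws with regularly varying tails for which the pointwise limit fails, so tail regular variation alone is not enough. The conclusion does hold in the full range $\delta\in(1,2)$ here, but only because the step law satisfies the local condition $q_k\sim c(\delta)k^{-(1+\alpha)}\asymp k^{-1}P(X>k)$; the statement adapted to exactly this situation is Doney's Theorem B \cite{Do}, which is what the paper cites. This is not a pedantic point for the present purposes: in the proof of Proposition \ref{treno1} the exponent must satisfy $2\delta-4<-1$, i.e.\ $\delta<3/2$ (see \eqref{seriale}), which is precisely the range that the unconditional Garsia--Lamperti theorem does not cover. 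Your fallback (eventual monotonicity of $n^{2-\delta}f(n)$) is, as you suspect, not available. A last remark on the constant: carrying out your singularity analysis gives $1-Q(z)\sim \frac{c(\delta)\Gamma(2-\delta)}{\delta-1}(1-z)^{\delta-1}$ and hence a limit in \eqref{salina} equal to $(\delta-1)\zeta(\delta)\sin(\pi(\delta-1))/\pi$, which carries the factor $\zeta(\delta)=1/c(\delta)$ and does not visibly reduce to $\Gamma(2-\delta)/\Gamma(\delta-1)$; you should reconcile this with the normalization of whichever renewal theorem you cite, though only the order $n^{\delta-2}$ is used in the sequel.
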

\begin{proof}
Let $\{X_i\}_{i \geq 1}$ be  a family of IID random variables with
$P(X_i=k) = q_k $, $k \in \bbN$. Observe now that $P(X_i \geq k) =
\sum _{s=k}^\infty q_s \sim c\, k^{1-\d}$ since $\d>1$. In
particular, if $1<\d<2$ we can use Theorem B of \cite{Do} and get
(\ref{salina}) with $u(n)$ instead $f(n)$, where $u(n)$ is defined
as follows: Consider the random walk $S_n$ on the set  $\bbN\cup\{0\}$,
starting at $0$, $S_0=0$, and defined as $S_n=X_1+X_2 +\cdots +X_n$
for $n\geq 1$. Given $n \in \bbN$ define $u(n)$ as
$$ u(n):= \bbE \left[ \left|\, \{m \geq 0 \,:\, S_m=n \}\right|\,
\right]= \sum _{m=0}^\infty P(S_m=n)\,.
$$
Trivially $u(0)=1$,  while the Markov property of the random walk
$S_n$ gives for $n\geq 1$ that
$$
u(n)= \sum _{m=1}^\infty \sum_{k=0}^{n-1} P(S_{m-1}=k, \;S_m=n)=
\sum _{m=1}^\infty \sum _{k=0}^{n-1} P(S_{m-1}=k) q_{n-k}= \sum
_{k=0}^{n-1} u(k) q_{n-k}\,.
$$
Hence, $f(n)$ and $u(n)$ satisfy the same system of recursive
identities and coincide for $n=0$, thus implying that $f(n)=u(n)$
for each $n \in \bbN$.
 \end{proof}
We have now all the tools in order to prove Proposition
\ref{treno1}:

\smallskip
\noindent {\sl Proof of Proposition \ref{treno1}}.
We shall exhibit a finite energy  unit flux $f(\cdot,\cdot)$  from $x_0$ to infinity in the
network $(S,\varphi)$.
 To this end we define $f(\cdot,\cdot)$ as follows
\begin{equation}\label{flussimetria}
f(x_i,x_k)= \begin{cases} f(i) q _{k-i} & \text{ if } 0 \leq i <
k \,, \\
-f(x_k,x_i)  & \text{ if } 0\leq k <i \,,\\
0 & \text{ otherwise}\,,
\end{cases}
\end{equation}
where $f(m),q_m$ are defined as in the previous lemma for some $\d
\in (1,2)$ that will be fixed below.

Since $\varphi \geq C \varphipa$,  the energy $\cE(f)$ dissipated by
the flux $f(\cdot,\cdot)$ is
\begin{equation}
 \cE (f)  = \sum _{n=0}^\infty \sum _{k=n+1}^\infty
\frac{f(x_n,x_k)^2}{\varphi (|x_n-x_k|)} \leq c
 \sum _{n=0}^\infty \sum _{k=n+1}^\infty
r_{p,\a}(x_k,x_n)  (f_n q _{k-n} )^2\,,
\end{equation}
where $r_{p,\a}(x,y):=1/\varphipa(|x-y|)$.
Hence, due to the previous lemma we obtain that
$$ \cE(f) \leq c \sum _{n=0}^\infty \sum _{k=n+1}^\infty
r_{p,\a} (x_k,x_n) (1+n) ^{2\d-4} (k-n)^{-2\d}\,.$$ In order to
prove that the energy $\cE(f)$ is finite $\bbP$--a.s., it is enough
to show that $ \bbE ( \cE(f))$ is finite  for some $\d \in (1,2)$.
To this end we observe that, due to assumption (\ref{momenti}) and
since $r_{p,\a}(x_k,x_n)=1\lor (x_n-x_k)^{1+\a}$, it holds
\begin{multline}\label{seriale}\qquad\qquad \bbE( \cE(f)) \leq c_1 \sum _{n=0}^\infty
(1+n)^{2\d-4} \sum _{u=1}^\infty \left[1+\bbE \bigl (\,
|x_u-x_0|^{1+\a}\, \bigr)\right] u ^{-2\d}\\ \leq c_2 \Bigl( \sum
_{n=1}^\infty (1+n)^{2\d-4} \Bigr) \Bigl (\sum _{u=1}^\infty
u^{1+\a-2\d} \Bigr)\,,\qquad
\end{multline}
for suitable constants $c_1,c_2$.
Hence, the mean energy is finite if $2\d-4<-1$ and $1+\a-2\d<-1$. In
particular for each $\a\in (0,1)$ one can fix $\d \in(1,2)$
satisfying the above conditions. This concludes the proof of the
transience of $(S,\varphi)$ for $\bbP$--a.a.\ $S$. It remains to
verify assumption (\ref{momenti}) whenever $\bbP$ is a renewal point
process such that $\bbE((x_1-x_0)^{1+\a})<\infty$. To this end we
observe that by convexity
$$
(x_u-x_0)^{1+\a}=u^{1+\a}\Bigl( \frac{1}{u} \sum_{k=0}^{u-1}
(x_{k+1}-x_k) \Bigr)^{1+\a} \leq u^{1+\a}\Bigl( \frac{1}{u}
\sum_{k=0}^{u-1} (x_{k+1}-x_k)^{1+\a} \Bigr)\,.
$$
Since by the renewal property $(x_{k+1}-x_k )_{k\geq 0}$ is a
sequence of  i.i.d. random variables, the mean of the last
expression equals $ u^{1+\a} \bbE ( (x_1-x_0)^{1+\a})=c u^{1+\a}$.
Therefore, (\ref{momenti}) is satisfied.
 \qed

\subsection{Proof of Theorem \ref{alex}}\label{xela}
We recall that $S_*= \cup_{n\geq 0} C_n$, where
\begin{equation}\label{ananas}
C_n:= \left\{ n e^{k\frac{2i\pi}{n+1}} \in {\mathbb C} :\: k\in \{0,
\dots, n\} \right\}
\end{equation}
and $\bbC$ is identified with $\bbR^2$.
In order  to introduce  more symmetries we  consider a family of
``rotations'' of the $C_n$'s: given $\theta = (\theta_n)_{n\geq 0}$
a sequence of independent random variables with uniform law on
$(-\frac{\pi}{n+1}, +\frac{\pi}{n+1})$ we define
\begin{equation}
C_n^\theta := e^{i\theta_n}C_n
\end{equation}
and for $x$ in $C_n$ we  use the notation
\begin{equation}
x^\theta := e^{i\theta_n}x\in C_n^\theta
\end{equation}
We will construct a unit flow $f^\theta$ from $0$ to infinity on
$S_*^\theta :=\cup_n C^\theta_n$ and will make an average over
$\theta$ to build a new flow $f$ on $S$.
In order to describe the flow
$f^\theta$, we consider the probability kernel $q_k= c(\d) k^{-\d}$,
$\d\in (1,2)$, introduced in Lemma \ref{patrono}. The value of $\d$
will be chosen at the end.  We build $f^\theta$ driving a fraction
$q_{n-m}$ of the total flow arriving in a site $x^\theta \in
C^\theta_m$ to each $C^\theta_n$ with $n>m$,   in such a way that
for each site $y \in C^\theta_n$ the flow received from $x^\theta$
is proportional to $\varphipa(x^\theta, y^\theta)$.  We have then,
for all $n>m$, $x \in C_m$ and $y \in C_n$
\begin{equation}
f^\theta(x^\theta,y^\theta) = q_{n-m} \frac{\varphipa(x^\theta,
y^\theta)}{Z^\theta_n(x^\theta)} f^\theta(x^\theta) \label{ftxy}
\end{equation}
with
\begin{equation}
Z^\theta_n(x^\theta):= \sum_{y \in C_n} \varphipa(x^\theta,
y^\theta)
\end{equation}
and $f^\theta (\cdot)$ defined recursively as
\begin{equation}
f^\theta(y^\theta)  = \begin{cases} 1 & \text{ if } y=0\,,\\
\sum_{m<n}{\sum}_{x \in C_m} q_{n-m} \frac{\varphipa(x^\theta,
y^\theta)}{Z^\theta_n(x^\theta)} f^\theta(x^\theta) & \text{ if }
y\in C_n, \;n >0\,.
\end{cases}
\end{equation}
Note that the quantity
\begin{equation}
f_n := \sum_{y\in C_n} f^\theta(y^\theta) \label{fn}
\end{equation}
is independent from $\theta$ and it is defined recursively by
$$\begin{cases}
f_0 =1 \\
 f_n =
\sum_{m<n} q_{n-m} f_m \,, \; n>0\,.
\end{cases}$$
By Lemma \ref{patrono} and the condition $\d\in (1,2)$
we have
\begin{equation}
f_n \sim c\,n^{\delta-2}\,,\qquad n \geq 1\,. \label{asymfn}
\end{equation}

We can now define our flow $f$ on $(S_*, \varphipa)$. For all $m<n$,
$x\in C_m$ and $y\in C_n$ we set
\begin{equation}
f(x,y) := {\mathbb E}\left[
  f^\theta (x^\theta,y^\theta)
\right]\,,
\end{equation}
where the expectation is w.r.t. $\theta$. Taking the conditional
expectation in~(\ref{ftxy}) we get
\begin{equation}
{\mathbb E}\left[
  f^\theta (x^\theta,y^\theta)
  \Big| \theta_m, \theta_n
\right] = q_{n-m} \frac{\varphipa(x^\theta,
y^\theta)}{Z^\theta_n(x^\theta)} {\mathbb E}\left[
  f^\theta(x^\theta)
  \Big| \theta_m
\right] \,.\label{ce}
\end{equation}
By radial symmetry the last factor does not depend neither on $x$
nor on $\theta_m$ and taking the conditional expectation
in~(\ref{fn}) we get
\begin{equation}
{\mathbb E}\left[
  f^\theta(x^\theta)
  \Big| \theta_m
\right] = \frac{f_m}{m+1}\,.
\end{equation}
Taking the expectation in~(\ref{ce}) we obtain 
\begin{equation}
f(x,y)= q_{n-m} {\mathbb E}\left[
  \frac{\varphi(x^\theta, y^\theta)}{Z^\theta_n(x^\theta)}
\right] \frac{f_m}{m+1}\,.
\end{equation}
By means of  this formula it is simple to estimate the energy $\cE(f)$
dissipated by the flux $f(\cdot,\cdot)$ in the network. Indeed, we
can write
\begin{eqnarray*}
\cE (f) &=& {\displaystyle \sum_{m<n} \sum_{x \in C_m} \sum_{y \in
C_n}
\frac{f^2(x,y)}{\varphipa(x,y)}}\\
&=& {\displaystyle \sum_{m<n} \sum_{x \in C_m} \sum_{y \in C_n}
\frac{q_{n-m}^2}{\varphipa(x,y)}  {\mathbb E}\left[
  \frac{\varphipa(x^\theta, y^\theta)}{Z^\theta_n(x^\theta)}
\right]^2 \frac{f_m^2}{(m+1)^2}}\,.
\end{eqnarray*}
Now we observe that
\begin{equation}
|x-x^\theta|\leq   \pi\,, \qquad \forall x \in C_n
\end{equation}
thus implying that  one can find $a>1$ such that, for all $x\not= y$
in $S_*$,
\begin{equation}
a^{-1} \varphipa(x,y) \leq \varphipa(x^\theta, y^\theta) \leq a
\varphipa(x,y)
\end{equation}
As a consequence, setting
\begin{equation}
Z_n(x):= \sum_{y \in C_n} \varphipa(x, y)
\end{equation}
we get
\begin{eqnarray}\label{dentino}
\cE(f) &\leq& {\displaystyle \sum_{m<n} \sum_{x \in C_m} \sum_{y \in
C_n} \frac{a^4 q^2_{n-m}}{\varphipa(x,y)}  \frac{\varphipa^2(x,
y)}{Z^2_n(x)}
\frac{f_m^2}{(m+1)^2}}\\
&=& {\displaystyle \sum_{m<n} \sum_{x \in C_m} \frac{a^4
q^2_{n-m}}{Z_n(x)}
\frac{f_m^2}{(m+1)^2}}\,.
\end{eqnarray}
By Lemma~\ref{bimbo}, there exists a constant $c>0$ such that for
all  $x\in C_m$ and  $n>m$ it holds
\begin{equation}
Z_n(x) \geq  \frac{c}{(n-m)^{1+\alpha}} \,.\end{equation}
Hence, we
can estimate $\cE(f)$ from above as
\begin{equation}
\cE (f) \leq {c} \sum_{k>0} k^{1+\alpha} q_k^2 \sum_{m \geq
0}\frac{f_m^2}{m+1}
\end{equation}
By~(\ref{asymfn}) this is a finite upper bound when
$$
\left\{
\begin{array}{l}
1 + \alpha - 2\delta < -1 \\
2\delta - 4 -1 < -1
\end{array}
\right. \Leftrightarrow \left\{
\begin{array}{l}
2\delta > 2 + \alpha \\
2\delta < 4
\end{array}
\right.
$$
We can choose $\delta \in ( 1,2)$ to have these relations satisfied
as soon as $\alpha<2$. This implies the transience of $(S_*,
\varphipa)$.

\qed

\appendix

\section{Some deterministic bounds 
}
We consider here the following subsets of $\bbR^2$:
\begin{align*}
& C_n= \left\{ n e^{k\frac{2i\pi}{n+1}} \in {\mathbb C} :\: k\in
\{0, \dots, n\} \right\} \\
&D_n = \left\{
  z \in {\mathbb Z}^2 :\:
  \|z\|_\infty = n
\right\}\,
\end{align*}
where $n\in \bbN $ and the complex plane $\bbC$ is identified with
$\bbR^2$.

\begin{Le}\label{bimbo} Set the  sequence
  $(E_n)_{n\geq 0}$ be equal to   $(C_n)_{n\geq 0}$ or  $(D_n)_{n\geq
  0}$, and
define
\begin{equation}
Z_n(x) =   \sum_{y\in E_n} \frac{1}{|y-x|^{2+\alpha}}
\end{equation}
for any $m,n \in {\mathbb N}$ with $m\neq n$ and for any $x \in
E_m$. Then, there exists a constant $a>1$  depending only on $\a$
such that
\begin{align}
& \frac{a^{-1}}{(n-m)^{1+\alpha}} \leq Z_n(x) \leq
\frac{a}{(n-m)^{1+\alpha}}\,, \qquad\;\;\;\;\;\ \text{ if }\; m<n\,, \label{sirius} \\
& \frac{a^{-1}n}{m(m-n)^{1+\alpha}} \leq Z_n(x) \leq
\frac{an}{m(m-n)^{1+\alpha}} \,, \qquad \text{ if }
\;m>n\,.\label{black}
\end{align}
\end{Le}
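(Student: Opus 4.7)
The plan is to exploit the almost-uniform distribution of the $n+1$ (respectively, $8n$) points of $E_n$ along a curve of diameter $\Theta(n)$, and reduce $Z_n(x)$ to a standard one-dimensional tail sum.

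First I would establish a single geometric estimate that works in both families. Indexing the points of $E_n$ as $y_k$, $|k|\leq \lfloor n/2\rfloor$, so that $y_0$ is the point of $E_n$ nearest to $x\in E_m$, I claim there exist universal constants $0<c_1<c_2$ such that
\begin{equation*}
c_1\Bigl[\,(n-m)^2+\tfrac{\min(m,n)}{\max(m,n)}\,k^2\,\Bigr] \;\leq\; |y_k-x|^2 \;\leq\; c_2\Bigl[\,(n-m)^2+\tfrac{\min(m,n)}{\max(m,n)}\,k^2\,\Bigr].
\end{equation*}
For $E_n=C_n$ this is immediate from the law-of-cosines identity $|y_k-x|^2=(n-m)^2+4mn\sin^2((\theta_k-\phi)/2)$ together with the two-sided bound $\sin^2 u\asymp u^2$ on $[0,\pi/2]$ and the fact that the angles $\theta_k$ are evenly spaced with step $2\pi/(n+1)$. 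For $E_n=D_n$ it follows by separately analysing the four sides of the square $\|z\|_\infty=n$, on each of which the distance from $x$ has the explicit Pythagorean form and reduces to a one-dimensional estimate along a coordinate axis.

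Once this estimate is in hand, the sum $Z_n(x)=\sum_k |y_k-x|^{-2-\alpha}$ is controlled by breaking at the critical scale $k^\star:=|n-m|\sqrt{\max(m,n)/\min(m,n)}$, at which the two terms inside the brackets balance. If $k^\star\geq n$, which happens when $\min(m,n)$ is very small compared to $\max(m,n)$, then all points of $E_n$ lie at distance $\asymp|n-m|\asymp \max(m,n)$ and the sum is $\asymp n\,|n-m|^{-2-\alpha}$. If $k^\star<n$, I split the sum into $|k|\leq k^\star$, which contributes $\asymp k^\star\,|n-m|^{-2-\alpha}$, and $|k|>k^\star$, which is the tail of a convergent $p$-series and contributes $\asymp (k^\star)^{-1-\alpha}\bigl(\min(m,n)/\max(m,n)\bigr)^{-1-\alpha/2}$; the two pieces match in order. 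A direct calculation then checks that in every regime the result reduces exactly to $(n-m)^{-1-\alpha}$ when $m<n$ and to $n/(m(m-n)^{1+\alpha})$ when $m>n$, using crucially that the condition $k^\star<n$ forces $\max(m,n)/\min(m,n)$ to stay bounded.

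The matching lower bounds are obtained by retaining only the block $|k|\leq \max(1,k^\star)$, on which the distance is at most a multiple of $|n-m|$ and which contains $\asymp\max(1,k^\star)$ points of $E_n$. The main technical obstacle I expect is the geometric distance estimate in the $D_n$ case, which is not given by a single closed formula and requires a short case-by-case analysis depending on which side of the outer square contains $y_k$ and on the position of $x$ along its side of $D_m$; once this verification is done the sum-level computation is routine.
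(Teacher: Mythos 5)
Your overall strategy is sound and it is genuinely different from the paper's. The paper never works with a pointwise distance estimate: it replaces the sum over $E_n$ by a contour integral ($\oint_{\mathcal{C}_r}|dz|/|z-x|^{2+\alpha}$, resp.\ the analogous integral over the square ${\mathcal D}_r$ with the $\|\cdot\|_\infty$ distance), proves the interior bound \eqref{sirius} for the circle by a scaling change of variables that reduces everything to showing that the one--parameter function $g(s)=I_s(s-1)$ is bounded above and below on $(1,\infty)$ (via dominated convergence as $s\to\infty$), and then obtains the exterior bound \eqref{black} from the averaging identity $I_r(x)=\frac{1}{2\pi|x|}\oint_{\mathcal{C}_{|x|}}I_r(z)\,|dz|$, which lets it apply the already--proved interior estimate twice; the square case is handled by explicit integration over projections onto the four sides. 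Your route --- a two--sided pointwise bound on $|y_k-x|$ followed by splitting the sum at the critical index $k^\star$ --- avoids both the integral comparison and the averaging trick, at the price of a more delicate geometric estimate. The sum--level bookkeeping you describe, including the observations that $k^\star<n$ forces $\max(m,n)/\min(m,n)$ to stay bounded and that $k^\star\geq n$ forces $|n-m|\asymp\max(m,n)$, is correct and does reproduce \eqref{sirius} and \eqref{black}, including the lower bounds.

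One caveat: your distance estimate is true but not ``immediate'' in the form you state it. For $E_n=C_n$ the law of cosines gives $|y_k-x|^2=(n-m)^2+4mn\sin^2((\theta_k-\phi)/2)$ with angular increments of order $k/n$, hence a tangential term of order $(m/n)k^2$; when $m>n$ this is $\frac{\max(m,n)}{\min(m,n)}k^2$, not $\frac{\min(m,n)}{\max(m,n)}k^2$. Likewise, for $E_n=D_n$ the same--side tangential term has coefficient of order $1$. Your unified formula is nevertheless a valid two--sided bound, but only because whenever the tangential term is not dominated by $(n-m)^2$ one necessarily has $m\asymp n$ (for instance, if $m>2n$ then $(m/n)k^2\leq C\,mn\leq C'(m-n)^2$ since $k\lesssim n$), and in that regime all these coefficients are comparable to $1$. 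You should state this reconciliation explicitly --- it is the same observation you already invoke for $k^\star$ --- and actually carry out the four--side case analysis for $D_n$; note also that $D_n$ has $8n$ points, so the index range $|k|\leq\lfloor n/2\rfloor$ must be enlarged to $|k|\lesssim n$, which is harmless for the computation.
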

\begin{proof} We start with the proof of \eqref{sirius} in the case
$E_n=C_n$. Given  $r>0$, we set $\cC_r=\{z \in \bbR^2:|z|=r\}$.
Since the points of $C_n$ are regularly distributed along the circle
${\cC}_n$ and since their number is asymptotically proportional to
the perimeter of ${\cC}_n$, it is enough to find $a>1$ such that
\begin{equation}
\frac{a^{-1}}{(r-|x|)^{1+\alpha}} \leq I_r(x):= \oint_{{\cC}_r}
\frac{|dz|}{|z-x|^{2+\alpha}} \leq \frac{a}{(r-|x|)^{1+\alpha}}
\label{obji}
\end{equation}
for all $r>0$ and for all $x$ in the open ball $ B(0,r)$ centered at
$0$ with radius $r$. Without loss of generality we can assume $x\in
{\mathbb R}_+$. Then, by the change of variable $z\rightarrow
z/(r-x)$, we obtain that
\begin{equation*}
I_r(x)= \frac{I_s (s-1) }{(r-x)^{1+\a} } \,,\qquad s:=r/(r-x)>1 \,.
\end{equation*}
In order to conclude, we only need to show that there exists a
positive constant $c$ such that  $c^{-1}\leq I_s (s-1) \leq c$ for
all $s>1$. We observe that
\begin{multline}
g(s):=I_s(s-1) = \int_{-\pi}^\pi \frac{
  s d\theta
}{
  \left[
    s^2 +(s-1)^2 - 2 s(s-1)\cos\theta
  \right]^{1+\frac{\alpha}{2}}
}= \\
\int_{-\pi s}^{\pi s } \frac{
   dy
}{
  \left[1 + 2s (s-1) (1-\cos \frac{y}{s})
  \right]^{1+\frac{\alpha}{2}}
}\,. \label{gint}
\end{multline}
The last equality follows from the change of variable $\theta
\rightarrow y:=s \theta$. Since $g$ is a continuous positive
function converging to $2\pi$ as $s\downarrow 1$,  all we have to do
is proving that
\begin{equation}
0<\liminf_{s\rightarrow +\infty}g(s) \leq \limsup_{s\rightarrow
+\infty}g(s) < +\infty \,.\label{obj}
\end{equation}
Since there exists $c>0$ such that $\cos\theta \leq 1 - c \theta^2$
for all $\theta \in [-\pi, \pi]$, whenever  $s\geq 2$  the
denominator in the r.h.s. of \eqref{gint} is bounded from below by $
[1+ c y^2]^{1+\a/2}$. Hence, $g(s)$ is  the integral on the real
line of a function dominated by the integrable function  $ y \mapsto
[1 + c y^2]^{-(1+\a/2)}$ as soon as $s\geq 2$. This allows to apply
the Dominated Convergence Theorem, thus implying that
\begin{equation} \lim_{s\rightarrow
+\infty} g(s) = \int_{-\infty}^{+\infty} \frac{
  dy
}{
  \left[
    1 + y^2
  \right]^{1+\frac{\alpha}{2}}
} \in (0,\infty)\,.
\end{equation}
This shows~(\ref{obj}) and concludes the proof of \eqref{sirius} in
the case $E_n= C_n$.

\medskip

We now prove \eqref{sirius} in the  case $E_n=D_n$. As before it
is enough to find $a>1$ such that, for all $r>0$ and $x\in
B_\infty(0,r):=\{z \in \bbR^2\,:\, \|z\|_\infty <r\}$,
\begin{equation}
\frac{a^{-1}}{(r-\|x\|_\infty)^{1+\alpha}} \leq \tilde J_r(x) \leq
\frac{a}{(r-\|x\|_\infty)^{1+\alpha}} \label{tildeq}
\end{equation}
where
\begin{equation*}
\tilde J_r(x) := \oint_{{\cD}_r} \frac{|dz|}{|z-x|^{2+\alpha}}\,,
\qquad {\cD}_r := \{z\in \bbR^2: \|z\|_\infty=r\}\,.
\end{equation*}
Since all the norms are equivalent, we just have to
prove~(\ref{tildeq}) for some $a>1$ with
\begin{equation*}
J_r(x) := \oint_{{\cD}_r} \frac{|dz|}{\|z-x\|_\infty^{2+\alpha}}
\end{equation*}
instead of $\tilde J_r(x)$. At this point it is possible to compute
explicitly $J_r(x)$ to check~(\ref{tildeq}). We proceed in the
following way. We call ${\cD}_r(x)$ the union of the orthogonal
projections of the square $x+{\cD}_{r-\|x\|_\infty}$ on the four
straight lines that contain the four edges of the square ${\cD}_r$.
In other words ${\cD}_r(x)$ is the set of the points in ${\cD}_r$
that share at least one coordinate with at least one point in
$x+{\cD}_{r-\|x\|_\infty}$. We have
\begin{equation}
J_r(x) = \int_{{\cD}_r \cap {\cD}_r(x)}
\frac{|dz|}{\|z-x\|_\infty^{2+\alpha}} + \int_{{\cD}_r \setminus
{\cD}_r(x)} \frac{|dz|}{\|z-x\|_\infty^{2+\alpha}} \,.\label{eqj}
\end{equation}
Estimating from below the first term in the r.h.s. of~(\ref{eqj}),
we get the lower bound
$J_r(x) \geq 2/(r-\|x\|_\infty)^{1+\alpha}$.
On the other hand~(\ref{eqj}) leads to
\begin{multline}
J_r(x) \leq 4\left(
  \frac{2}{(r-\|x\|_\infty)^{1+\alpha}}
  + 2\int_{r-\|x\|_\infty}^{r-\|x\|_\infty+r}
  \frac{dy}{y^{2+\alpha}}
\right) \leq \\  8\left(
  \frac{1}{(r-\|x\|_\infty)^{1+\alpha}}
  + \int_{r-\|x\|_\infty}^{+\infty}
  \frac{dy}{y^{2+\alpha}}
\right) =
\frac{8\left(1+\frac{1}{1+\alpha}\right)}{(r-\|x\|_\infty)^{1+\alpha}}
\end{multline}
and this concludes the proof of  \eqref{sirius} for $E_n=D_n$.

\medskip

To prove \eqref{black}  we first look at the case $E_n=C_n$. Once
again it is enough to find $a>1$ such that, for all $x\in \bbR^2$
and $r<|x|$,
\begin{equation}
\frac{a^{-1}r}{|x|(|x|-r)^{1+\alpha}} \leq I_r(x) \leq
\frac{ar}{|x|(|x|-r)^{1+\alpha}}\,. \label{objii}
\end{equation}
Since $I_r(x)$ depends on $r$ and $|x|$ only, we have
\begin{equation}
I_r(x) = \frac{1}{2\pi|x|} \oint_{{\cC}_{|x|}} I_r(z) |dz| =
\frac{1}{2\pi|x|} \oint_{{\cC}_{|x|}} \left(
  \oint_{{\cC}_r}\frac{|dy|}{|z-y|^{2+\alpha}}
\right)|dz|\,.
\end{equation}
Integrating first in $z$, using~(\ref{obji}), then integrating in
$y$, we get~(\ref{objii}).

\medskip

Finally, to prove \eqref{black} in the case $E_n=D_n$ it is enough
to find $a>1$ such that, for all $x\in \bbR^2$ and $r<\|x\|_\infty$,
\begin{equation}
\frac{a^{-1}r}{\|x\|_\infty(\|x\|_\infty-r)^{1+\alpha}} \leq J_r(x)
\leq \frac{ar}{\|x\|_\infty(\|x\|_\infty-r)^{1+\alpha}}\,.
\label{objjj}
\end{equation}
 As before, we define ${\cD}_r(x)$
as
 the union of the orthogonal
projections of the square $x+{\cD}_{r-\|x\|_\infty}$ on the four
straight lines that contain the four edges of the square ${\cD}_r$.
Note that ${\cD}_r(x)$ is not anymore a subset of ${\cD}_r$ but
we still have
\begin{equation}
J_r(x) = \int_{{\cD}_r \cap {\cD}_r(x)}
\frac{|dz|}{\|z-x\|_\infty^{2+\alpha}} + \int_{{\cD}_r \setminus
{\cD}_r(x)} \frac{|dz|}{\|z-x\|_\infty^{2+\alpha}} \label{eqjj}
\end{equation}
This implies
\begin{eqnarray*}
J_r(x) &\leq&
  \frac{2\min(r,\|x\|_\infty -r)}{(\|x\|_\infty-r)^{2+\alpha}}
  + 8\int_{\|x\|_\infty-r}^{\|x\|_\infty-r+r}
  \frac{dy}{y^{2+\alpha}}
\\
&=&
  \frac{2\min\left(r,\|x\|_\infty -r\right)}
       {(\|x\|_\infty-r)^{2+\alpha}}
  + \frac{8}{1+\alpha}
  \left[
    \frac{1}{(\|x\|_\infty-r)^{1+\alpha}}
    -
    \frac{1}{\|x\|_\infty^{1+\alpha}}
  \right]
\\
&=&
  \frac{2\min\left(r,\|x\|_\infty -r\right)}
       {(\|x\|_\infty-r)^{2+\alpha}} \nonumber\\
&&
  + \frac{8(1+\alpha)^{-1}}{(\|x\|_\infty-r)^{1+\alpha}}
  \left[
    1-\left(
      1-\frac{r}{\|x\|_\infty}
    \right)^{1+\alpha}
  \right]
\end{eqnarray*}
The convexity of $
y \mapsto (1-y)^{1+\alpha}
$
gives
\begin{equation}
1 - (1+\alpha)\frac{r}{\|x\|_\infty} \leq
\left(1-\frac{r}{\|x\|_\infty}\right)^{1+\alpha} \leq
1-\frac{r}{\|x\|_\infty} \label{cvx}
\end{equation}
and observing that
\begin{equation}
\min\left(r,\|x\|_\infty -r\right) \leq \frac{
  2r (\|x\|_\infty -r)
}{
  \|x\|_\infty
}
\end{equation}
we get
\begin{equation}
J_r(x) \leq \frac{12r}{\|x\|_\infty (\|x\|_\infty-r)^{1+\alpha}}
\end{equation}
As far as the lower bound is concerned we distinguish two cases. If
${\cD}_r(x)$ does not contain any vertex of the square ${\cD}_r$
then we can estimate $J_r(x)$ from below with the the first term in
the right-hand side of~(\ref{eqjj}):
\begin{equation}
J_r(x) \geq \frac{1}{(\|x\|_\infty-r)^{1+\alpha}} \geq
\frac{r}{\|x\|_\infty (\|x\|_\infty-r)^{1+\alpha}}
\end{equation}
If ${\cD}_r(x)$ does contain some vertex of the ${\cD}_r$ we
estimate $J_r(x)$ with the the second term in the right-hand side
of~(\ref{eqjj}). Recalling~(\ref{cvx}):
\begin{equation*}
J_r(x) \geq
  \int_{\|x\|_\infty-r}^{\|x\|_\infty}
  \frac{dy}{y^{2+\alpha}}
=
  \frac{(1+\alpha)^{-1}}{(\|x\|_\infty-r)^{1+\alpha}}
  \Big[
    1-\Big(
      1-\frac{r}{\|x\|_\infty}
    \Big)^{1+\alpha}
  \Big]
\geq
  \frac{(1+\alpha)^{-1}r}{\|x\|_\infty(\|x\|_\infty-r)^{1+\alpha}}\,.
\end{equation*}

\end{proof}

\section{The random walk $(\bbZ^d, \varphipa)$}\label{losangeles}

In this Appendix, we study by harmonic analysis the random walk on
$\bbZ^d$ with polynomially decaying jump rates. Without loss of
generality,  we slightly modify the function $\varphipa$ as
$\varphipa (r)= (1+r^{d+\a})^{-1} $. Hence, we consider jump
probabilities \be\la{defc} p(x,y) = p(y-x)\,,\quad \; p(x) =
c\,(1+|x|^{d+\a})^{-1}\,,
\end{equation}
for $c>0$ such that $\sum_{x\in\bbZ^d}p(x)=1$. The associated
homogeneous random walk on $\bbZ^d$ is denoted
$X=\{X_k,\,k\in\bbN\}$.

\subsection{Recurrence and transience}
It is known that $X$ is transient if $d\geq 3$ for any $\a>0$ and in
$d=1,2$ it is transient if and only if $0<\a<\min\{2,d\}$. Let us
briefly recall how this can be derived by simple harmonic analysis.

From \cite{Spitzer}[ Section 8, T1] the random walk is transient if
$d\geq 3$ for any $\a>0$, and it is recurrent in $d=1$ for $\a>1$
and in $d=2$ for $\a>2$. Other cases are not covered by this theorem
but one can use the following facts. Define the characteristic
function
$$\phi(\theta) = \sum_{x\in\bbZ^d}p(x)\nep{i\,x\cdot \theta}\,.$$
Note that $\phi(\theta)$ is real and $-1\leq \phi(\theta)\leq 1$.
Moreover, since the kernel $p$ is aperiodic $\phi(\theta)<1$ for all
$\theta\neq 0$. By the integrability criterion given in
\cite{Spitzer}[Section 8, P1],  $X$ is transient if and only if
\begin{equation}\label{spinone}
\lim _{t \uparrow 1} \int _{[-\p,\p)^d } \frac{1}{1-t \phi(\theta) }
d \theta < \infty\,.
\end{equation}

If $\a\in(0,2)$ we have, for any $d\geq 1$:
\be\la{1} \lim_{|\theta|\to 0}\frac{1-\phi(\theta)}{|\theta|^\a} =
\k_{\d,\a}\in(0,\infty)\,.
\end{equation}
The limit (\ref{1}) is proved in \cite{Spitzer}[ Section 8, E2] in
the case $d=1$ but it can be generalized to any $d\geq 1$. Indeed,
writing $\theta=\e\hat\theta$, $|\hat\theta|=1$: \be\la{id}
\frac{1-\phi(\theta)}{|\theta|^\a} =
\e^d\sum_{x\in\bbZ^d}(1+|x|)^{d+\a}p(x)\frac{1-\cos(\e
x\cdot\hat\theta)}{(\e+|\e x|)^{d+\a}}\,,
\end{equation}
and when $\e\to 0$, using $(1+|x|)^{d+\a}p(x)= c$, we have
convergence to the integral
$$
c\,\int_{\bbR^d} f(x)\,dx\,,\quad\;\; f(x): =
\frac{1-\cos(x\cdot\hat\theta)}{|x|^{d+\a}}\,,
$$
where $\hat\theta$ is a unit vector (the integral does not depend on
the choice of $\hat\theta$). This integral is positive and finite
for $\a\in(0,2)$ and (\ref{1}) follows.

Using (\ref{1}) the integrability criterion   \eqref{spinone}
implies that for $d=1$ the RW is transient if and only if
$\a\in(0,1)$, while  for  $d=2$ the RW is transient for any $\a\in
(0,2)$. The only case remaining is $d=2, \a=2$. This apparently is
not covered explicitly in \cite{Spitzer}. However, one can modify
the argument above to obtain that for any $d\geq 1$, $\a=2$:
\be\la{2} \lim_{|\theta|\to
0}\frac{1-\phi(\theta)}{|\theta|^2\log(|\theta|^{-1})} =
\k_{d,2}\in(0,\infty)\,.
\end{equation}
Thus, using again the integrability criterion \eqref{spinone}, we
see that $\a=2,d=2$ is recurrent.

To prove (\ref{2}) one can write, reasoning  as in (\ref{id}): For
any $\d>0$
\begin{align*}\la{id2}
\frac{1-\phi(\theta)}{|\theta|^2\log(|\theta|^{-1})} &=
\frac{c\,\e^d}{\log(\e^{-1})} \sum_{x\in\bbZ^d:\,1\leq |x|\leq
\e^{-1}\d}
\frac{1-\cos(\e x\cdot\hat\theta)}{(\e+|\e x|)^{d+2}} + O\left(1/\log(\e^{-1})  \right)\\
& =  \frac{c}{2\log(\e^{-1})}
\int_{
\e\leq |x|\leq \d} \frac{x_1^2\,dx}{|x|^{d+2}}+ O\left(
1/\log(\e^{-1})\right)\,,
\end{align*}
where $x_1$ is the first coordinate of the vector
$x=(x_1,\dots,x_d)$. The integral appearing in the first term above
is, apart from a constant factor, $\int_{\e}^\d r^{-1}dr =
\log(\e^{-1}) + {\rm const.}$ This proves the claim (\ref{2}).

\subsection{Effective resistance estimates}
Let  $R_n:=R_n(0)$  be the effective resistance associated to the
box $\{x\in\bbZ^d\,,\; \|x\|_\infty \leq n\}$. As already discussed
in the introduction, $\frac1c\,R_n$ (where $c>0$ is the constant in
(\ref{defc})) equals
 the expected number of visits to the origin before visiting the
set $\{x\in\bbZ^d\,,\; \|x\|_\infty >n\}$ for the random walk $X$
with $X_0=0$.  We are going to give upper bounds on $R_n$ in the
recurrent cases $d=1,2$, $\a\geq \min\{d, 2\}$. By comparison with
the simple nearest neighbor random walk we have that (for any $\a$)
$R_n\leq C\log n$ if $d=2$ and $R_n\leq C\,n$ if $d=1$. Due to
Theorems \ref{fegato} and \ref{combine}, this estimate is of the
correct order whenever $p(x)$ has finite second moment ($\a>2$). The
remaining cases are treated as follows.

We claim that for some constant $C$ \be\la{cl} R_n\leq
C\int_{[-\pi,\pi)^d}\frac{d\theta}{n^{-\a} + (1-\phi(\theta))}\,.
\end{equation}
The proof of (\ref{cl}) is given later. Assuming (\ref{cl}), we
obtain the following bounds:
\begin{equation}\la{rnbos}
R_n\leq C\begin{cases}\log n & d=1,\,\a=1\\
n^{\a-1} & d=1,\,\a\in(1,2)\\
n/\sqrt{\log n}
 & d=1,\,\a=2\\
\log\log n & d=2,\,\a=2\,.\end{cases}
\end{equation}
With the only exception of the case $d=1,\a=2$, the above upper
bounds are of the same order  of the lower bounds of Theorem
\ref{fegato} and \ref{combine}.

The above bounds are easily obtained as follows. For $\a\in[1,2)$,
$d=1$, using the bound $1-\phi(\theta)\geq \l|\theta|^\a$, cf.
(\ref{1}), we see that the first two estimates in (\ref{rnbos})
follow
by decomposing the integral in \eqref{cl} in the regions
$|\theta|\leq n^{-1}$, $|\theta|>n^{-1}$ and then using obvious
estimates.

For $\a=2$, we decompose the integral in \eqref{cl} in the regions
$|\theta|\leq \e $, $|\theta|>\e$, $\e:=1/10$. Since
$1-\phi(\theta)$ vanishes only for $\theta=0$, the  integral over
the region $|\theta|>\e$  is of order 1, while we can use  the bound
$1-\phi(\theta)\geq \l|\theta|^2\log(|\theta|^{-1})$ over the region
$|\theta|\leq \e$, cf.\ (\ref{2}). Hence,
 we see that for some $C$ \be\la{clove} R_n\leq C\int_{[-\e,\e)^d}
\frac{d\theta}{(n^{-2} + |\theta|^2\log(|\theta|^{-1}))}\,.
\end{equation}
Then, if  $d=1$
(\ref{clove}) yields
$$
R_n\leq 2C\int_0^{(n\sqrt{\log n})^{-1}} \frac{d\theta}{n^{-2} } +
2C\int_{(n\sqrt{\log n})^{-1}}^\e
\frac{d\theta}{\theta^2\log(\theta^{-1})}
\,.
$$
The first integral gives $2C\,\frac{n}{\sqrt{\log n}}$. With the
change of variables $y=1/\theta$ the second integral becomes
$$\int_{\e^{-1}}^{n\sqrt{\log n}}
\frac{dy}{\log y}\,.
$$
This gives an upper bound $O(n/\sqrt{\log n})$. (Indeed, for
$\e=1/10$ we have that for every $y\geq \e^{-1}$, $(\log y)^{-1}
\leq 2[(\log y)^{-1} -  (\log y)^{-2}] = 2\frac{d}{dy}\frac{y}{\log
y}$, which implies the claim). Therefore $R_n\leq C\,n/\sqrt{\log
n}$ in the case $d=1,\a=2$.

    Reasoning as above, if $d=2$ and $\a=2$ we have, for some
$C$:
$$
R_n\leq C\int_0^\e \frac{\theta\,d\theta}{(n^{-2} +
\theta^2\log(\theta^{-1}))}\,.
$$
We divide the integral as before and obtain
$$
R_n\leq C\,n^2\,\int_0^{(n\sqrt{\log n})^{-1}} \theta\,d\theta +
C\int_{(n\sqrt{\log n})^{-1}}^\e
\frac{d\theta}{\theta\log(\theta^{-1})}\,.
$$
The first integral is small and can be neglected. The second
integral is the same as
$$
\int_{\e^{-1}}^{n\sqrt{\log n}} \frac{dy}{y\log y} \leq C\log\log
n\,.
$$
This proves that $R_n\leq C\log\log n$\,.

\subsection{Proof of claim (\ref{cl})}
To prove (\ref{cl}) we introduce the truncated kernel
$$Q_n(x,y)= \bbP_x(X_1 = y\,; |X_1 - x| \leq c_1\,n) =
\frac{c}{1+|y-x|^{d+\a}}\,1_{\{|y-x|\leq c_1\,n\}}\,,$$ where
$c>0$ is defined in (\ref{defc}) and $c_1>0$
is another constant. Clearly, for all sufficiently large $c_1$
$$
R_n\leq 
c\sum_{k=0}^\infty Q_n^k(0,0)\,,
$$
where
$Q_n^k(0,0)$ is the probability of returning to the origin
after $k$ steps without ever taking a jump of size larger than
$c_1\,n$.

Note that for any $x$
$$
u_n:=\sum_{y\in\bbZ^d} Q_n(x,y) = \bbP_0(|X_1|\leq c_1 \,n) = 1 -
\g_n\,,\quad \g_n: = \sum_{|x|>c_1\,n} p(x) \sim n^{-\a}\,.
$$
Let $\hat Q_n(x,y)$ denote the kernel of the RW on $\bbZ^d$ with
transition $p(x)$ conditioned to take only jumps of size less than
$c_1n$, so that $\hat Q_n(x,y) = u_n^{-1}Q_n(x,y)$. Set
$$
\phi_n(\theta)=\sum_{x\in\bbZ^d} Q_n(0,x)\nep{i\theta\cdot x}\,,
\quad \hat\phi_n(\theta)=\sum_{x\in\bbZ^d} \hat
Q_n(0,x)\nep{i\theta\cdot x}\,.
$$
$\phi_n(\theta)=u_n\hat\phi(\theta)$ is real and $\nep{i\theta\cdot
x}$ can be replaced by $\cos{(\theta\cdot x)}$ in the above
definitions. We can write
$$
\sum_{k=0}^\infty Q_n^k(0,0) =
\sum_{k=0}^\infty\frac1{(2\pi)^d}\int_{[-\pi,\pi)^d}\phi_n(\theta)^k\,d\theta=
\frac1{(2\pi)^d}\int_{[-\pi,\pi)^d}\frac{d\theta}{1-\phi_n(\theta)}\,,
$$
where we use the fact that $|\phi_n(\theta)|\leq u_n<1$ for any $n$.
Moreover
$$
1-\phi_n(\theta)= 1-u_n + u_n(1-\hat\phi_n(\theta)) = \g_n +
u_n(1-\hat\phi_n(\theta))\,.
$$
Therefore it is sufficient to prove that \be\la{top}
u_n(1-\hat\phi_n(\theta))\geq \d\,(1-\phi(\theta))\,,
\end{equation}
for some constant $\d>0$. Let $B(t)$ denote the euclidean ball of
radius $t>0$. Suppose $\theta=y\,n^{-1}\hat\theta$ for some $y>0$
and a unit vector $\hat\theta$.
Then, 
\begin{equation}\la{expre}
\frac{u_n(1-\hat\phi_n(\theta))}{1-\phi(\theta)} =
\frac{\sum_{x\in\bbZ^d\cap B(c_1n)} \frac{1-\cos(y
(x/n)\cdot\hat\theta)}{(n^{-1}+|(x/n)|)^{d+\a}}} {\sum_{x\in\bbZ^d}
\frac{1-\cos(y (x/n)\cdot\hat\theta)}{(n^{-1}+|(x/n)|)^{d+\a}}}
\end{equation}
Reasoning as in the proof of (\ref{1}) and (\ref{2}) we see that for
all $\a\in(0,2]$, the expression (\ref{expre}) is  bounded away from
$0$ for $y\in(0,C]$, for $n$ large enough. Indeed, if $\a\in(0,2)$
we have convergence, as $n\to\infty$ to
$$
\frac{\int_{B(c_1)}
\frac{1-\cos(yx\cdot\hat\theta)}{|x|^{d+\a}}\,dx}{\int_{\bbR^d}
\frac{1-\cos(yx\cdot\hat\theta)}{|x|^{d+\a}}\,dx}\,.
$$
On the other hand, for $\a=2$, from the proof of (\ref{2}) we see
that (\ref{expre}) converges to $1$. Therefore, in all cases
(\ref{top}) holds for any $|\theta|\leq C\,n^{-1}$, for all $n$
sufficiently large.

Next, we consider the case $|\theta|> C\,n^{-1}$. For this range of
$\theta$ we know that
$$
1-\phi(\theta)\geq \l|\theta|^\a \geq \l\,C^\a\,n^{-\a}\,,
$$
for some $\l>0$. Note that this holds also in the case $\a=2$
according to (\ref{2}). From
$$
\phi(\theta)-u_n\hat\phi_n(\theta) = \sum_{x:\,|x|>c_1\,n}
p(x)\,\cos(\theta\cdot x)\,,
$$
we obtain $\phi(\theta)\geq u_n\hat \phi_n(\theta) - \g_n$.
Therefore, for $|\theta|> C\,n^{-1}$
\begin{align*}
u_n(1- \hat\phi_n(\theta)) - \d\,(1-\phi(\theta))& \geq -2\g_n +
(1-\d)(1-\phi(\theta))
\\
&\geq  -2\g_n + (1-\d)\l\,C^\a\,n^{-\a}\,.
\end{align*}
Taking $C$ large enough and using $\g_n=O(n^{-\a})$ shows that
(\ref{top}) holds. This ends the proof of (\ref{cl}).

\bigskip

\noindent  {\bf Acknowledgements}. The authors  kindly thank M.
 Barlow, F. den Hollander and P. Mathieu for useful discussions.  A.
 Gaudilli\`{e}re  acknowledges the financial support of   GREFI--MEFI.


\begin{thebibliography}{99}

\bibitem{ABS}  L. Addario-Berry and A. Sarkar, {\em The simple random walk on a
random Voronoi tiling,\/} preprint available at {\tt http://www.dms.umontreal.ca/~addario/}




\bibitem{BBK} M.T.\ Barlow, R.F.\ Bass, T.\ Kumagai, {\em  Parabolic Harnack inequality and
heat kernel estimates for random walks with long range jumps}. To
appear in Prob. Theory  Rel. Fields, 2008.

\bibitem{B} N.\ Berger, {\em  Transience, recurrence and critical behavior for long-range percolation}, Commun. Math. Phys. {\bf 226},
531--558, 2002.


\bibitem{Quattro} J.\ Ben Hough, M.\ Krishnapur, Y.\ Peres, B.\
Vir\'{a}g, {\em Determinantal processes and independence},
Probability Surveys, {\bf 3}, 206--229, 2006.

\bibitem{BPP} I.\ Benjamini, R.\ Pemantle, Y.\ Peres, {\em Unpredictable paths and percolation},
Ann. Probab. {\bf 26}, 1198 - 1211, 1998.

\bibitem{CF1}  P.\ Caputo, A.\ Faggionato,  {\em Isoperimetric inequalities
and mixing time for a random walk on a random point process},
Ann.\  Appl.\ Probab. {\bf 17}, 1707-1744, 2007

\bibitem{CF2}  P.\ Caputo, A.\ Faggionato,  {\em Diffusivity in one-dimensional generalized Mott variable-range hopping models},
 to appear in Ann.\ Appl.\ Probab.\, 2008.


\bibitem{DS} P.G.\ Doyle, J.L.\ Snell,
{\em Random walks and electric networks}, The Carus mathematical
monographs 22,   Mathematical Association of America, Washington,
1984.



\bibitem{Do} R.A.\ Doney, {\em
One-sided local large deviation and renewal theorems in the case of infinite mean},
Probab.\ Theory Related Fields {\bf 107}, 451--465, 1997

\bibitem{LyonsPeres} R.\ Lyons, Y.\ Peres, {\em Probability on Trees and Networks},
book in progress available at {\tt http://mypage.iu.edu/~rdlyons/prbtree/prbtree.html}

\bibitem{LyonsSteif} R.\ Lyons, J.\ Steif,
{\em Stationary determinantal processes: Phase multiplicity, Bernoullicity, Entropy, and
Domination}, Duke Math.\ Journal {\bf 120}, 515--575, 2003



\bibitem{FM} A.\ Faggionato, P.\ Mathieu, {\em Mott law as upper
bound for a random walk in a random environment.} To appear in Comm.
Math. Phys.



\bibitem{FSS} A.\ Faggionato, H.\ Schulz--Baldes, D.\ Spehner, {\em
     Mott law as lower bound for a random walk in a random
     environment.} Comm.  Math. Phys. {\bf 263},
     21--64, 2006.


\bibitem{GK}  H.--O.\ Georgii, T.\ K\"uneth, {\em Stochastic comparison
  of point random fields}.  J.\ Appl.\ Probab.\ {\bf 34}, 868--881,
  1997.


\bibitem{GKZ} G.\ Grimmett, H.\ Kesten, Y.\ Zhang,
{\em Random walk on the infinite cluster of the percolation model},
Probab. Theory   Rel. Fields {\bf 96}, no. 1, 33--44, 1993.

\bibitem{G} G.\ Grimmett, {\em Percolation.} Second edition. Springer, Grundlehren
321, Berlin, 1999.


\bibitem{KM} T.\ Kumagai, J.\ Misumi, {\em Heat kernel estimates for strongly recurrent random walk on random
media}, preprint, 2008.


\bibitem{K} J.\ Kurkij\"arvi, {\em Hopping conductivity in one
    dimension}, Phys.\ Rev.\ B, {\bf 8}, no.\ 2, 922--924, 1973.



\bibitem{M} J.\ Misumi, {\em Estimates on the effective resistance in a long-range percolation on
${\mathbb{Z}}^d$},  Kyoto U. Math. Journal   {\bf 48}, No.2 ,
389--400, (2008).


\bibitem{Pisztora} A.\ Pisztora. {\em
Surface order large deviations for Ising, Potts and percolation
models}, Probab.\ Theory Rel.\ Fields {\bf 104}, 427--466, 1996.


\bibitem{TLyons} T.\ Lyons, {\em A simple criterion for transience of a reversible Markov chain},  Ann.\ Probab.\  {\bf 11}, no. 2, 393--402,
1983.

\bibitem{So} A.\ Soshnikov,  {\em  Determinantal random point fields}.  Russian Mathematical Surveys, {\bf 55},  923-975,
2000.

\bibitem{Spitzer} F.\ Spitzer, {\em Principles of random walks}, Second edition,
Graduate Texts in Mathematics, Vol. 34. Springer-Verlag, 1976.

\bibitem{DV} D.\ J.\ Delay, D.\ Vere-Jones, {\em
An introduction to the theory of point processes}, Vol. I. Second
edition. Springer-Verlag, 2003.




\end{thebibliography}
\end{document}